\newtheorem{thm}{Theorem}
\newtheorem{lemma}{Lemma}[section]
\newtheorem{cor}[lemma]{Corollary}
\newtheorem{remark}{Remark}[section]
\newtheorem{prop}{Proposition}
\newcommand\be{\begin{equation}}
\newcommand\ee{\end{equation}}
\newcommand\bea{\begin{eqnarray}}
\newcommand\eea{\end{eqnarray}}
\newcommand\beaa{\begin{eqnarray*}}
\newcommand\eeaa{\end{eqnarray*}}
\newcommand\bay{\begin{array}}
\newcommand\eay{\end{array}}
\newcommand\ba{\begin{align}}
\newcommand\ea{\end{align}}
\newcommand{\e}{\epsilon}
\begin{document}

%%%%%%%%%%
\title[strong competition system]
{Sharp estimates for {the spreading speeds} of the Lotka-Volterra diffusion system with strong competition}

\author{Rui Peng}
\address{School of Mathematics and Statistics, Jiangsu Normal University, Xuzhou, 221116, Jiangsu Province, People's Republic of China}
\email{pengrui\_seu@163.com}

\author{Chang-Hong Wu}
\address{Department of Applied Mathematics, National Chiao Tung University, Hsinchu 300, Taiwan, Republic of China}
\email{changhong@math.nctu.edu.tw}

\author{Maolin Zhou}
\address{School of Science and Technology, University of New England, Armidale, NSW 2351, Australia}
\email{zhouutokyo@gmail.com}

\thanks{Date: \today.}

\thanks{{\em 2010 Mathematics Subject Classification.} 35K57, 35K45, 92D25.}

\thanks{{\em Key words and phrases: Lotka-Volterra diffusion system, strong competition, traveling waves, long-time behavior, spreading speed and profile. } }

\begin{abstract}
This paper {is concerned with} the classical two-species Lotka-Volterra
diffusion system with strong competition. The sharp dynamical behavior of the solution is established in two different situations:
either one species is an invasive one and the other is a native one or both are invasive species.
Our results seem to be the first that provide a precise spreading
speed and profile for such a strong competition system.
Among other things, our analysis relies on the construction of new types of supersolution and subsolution,
which are optimal in certain sense.
\end{abstract}

\maketitle \setlength{\baselineskip}{16pt}
%-------------------------------------------------------------
\section{Introduction}\label{introduction}
\setcounter{equation}{0}

%--------------------Introduction------------------------

In this paper, we consider the classical two-species Lotka-Volterra competition-diffusion system:
\bea\label{LV-sys}
\begin{cases}
u_t=du_{xx}+ru(1-u-a v),\ \  &t>0,\ x\in\mathbb{R},\\
v_t=v_{xx}+v(1-v-b u),\ \ &t>0,\ x\in\mathbb{R}
\end{cases}
\eea
with initial data
\bea\label{LV-ic}
u(0,x)=u_0(x),\quad v(0,x)=v_0(x),\ \ x\in\mathbb{R},
\eea
where $u(t,x)$ and $v(t,x)$ represent the population {densities} of two competing species at the position
$x$ and time $t$; $d$ stands for the diffusion rate of $u$; $r$ represents the intrinsic growth rate of $u$;
$a$ and $b$ represent the competition coefficient for two species, respectively.
All parameters are assumed to be positive. Note that {the} system \eqref{LV-sys} has been reduced into the dimensionless
form using a standard scaling (see, e.g., \cite{Murray}).

Since the pioneering works of Fisher \cite{Fisher} and Kolmogorov, Petrovsky and Piskunov \cite{KPP},
reaction-diffusion equations have been the subject of a large amount of research aiming at
the understanding of the spread dynamics of invasive species. More precisely, when an invasive species is introduced into a new environment, the mathematical approach of \cite{Fisher,KPP} to describe the spreading of species is based on the study of
the long time behavior of the solution of the following Fisher-KPP equation:
\bea\label{Single eq}
\begin{cases}
w_t=d w_{xx}+rw(1-w),\quad &t>0,\ x\in\mathbb{R},\\
w(0,t)=w_0(x),\quad &x\in\mathbb{R},
\end{cases}
\eea
where $w(t,x)$ stands for the population density for the invasive species at time $t$ and position $x$.

When $w_0\not\equiv0$ is nonnegative with compact support in $\mathbb{R}$, the classical result  of Aronson and Weinberger \cite{AW1975,AW1978} shows that there exists a unique $c^*=2\sqrt{rd}$ such that the solution $w$ to \eqref{Single eq} satisfies
\beaa
&&\lim_{t\rightarrow\infty}\max_{|x|\geq ct} w(x,t)=0\quad \mbox{for any}\ c>c^*;\\
&&\lim_{t\rightarrow\infty}\max_{|x|\leq ct} [1-w(x,t)]=0\quad \mbox{for any}\ c\in(0,c^*).
\eeaa
Such a spreading behavior describes the {invasion} phenomenon of the unstable state $0$ by the stable state $1$, and the quantity $c^*$ is often referred to as the (asymptotic) spreading speed of the species and
has been used to predict the spreading speed for various invasive species in {nature} \cite{SK1997}. Furthermore,
$c^*$ coincides with the minimal speed of the traveling wave solution of the form: $w(x-ct)$
connecting $1$ and $0$; that is, if and only if $c\geq c^*$, the following problem
 \beaa\label{Single eq-tw}
\begin{cases}
dw''+cw'+rw(1-w)=0,\ \ w>0\ \ \mbox{in}\ \mathbb{R},\\
w(-\infty)=1,\ \ \ w(\infty)=0\ \
\end{cases}
\eeaa
admits a unique solution (up to translation).

In the absence of the species $v$ (resp. $u$), {the} system \eqref{LV-sys} is reduced to the Fisher-KPP equation \eqref{Single eq}, which admits a unique traveling wave solution (up to translation), denoted by $U_{KPP}(x-ct)$ (resp. $V_{KPP}(x-ct)$) connecting $1$ and $0$ if and only if $c\geq 2\sqrt{rd}$ (resp. $c\geq 2$). For sake of convenience, we denote in this paper
 $$
 c_u=2\sqrt{rd},\ \ \ c_v=2.
 $$
Clearly, $c_u$ (resp. $v$) is the spreading speed of the species $u$ (resp. $v$) in the absence of the species $v$ (resp. $u$) of \eqref{LV-sys}.

Traveling wave solutions play a crucial role in understanding the spreading of invasive species.
As far as one species is concerned, great progress {has} been made in recent decades to determine the spreading dynamics via
the associated traveling wave solutions; one may refer to, for instance, \cite{Bramson,Chen,HNRR,Lau,Rothe1978,Sattinger1976,Uchiyama1978} and references therein.

When multiple species interact, there is a wide literature on (asymptotic) spreading speeds for various {kinds of} evolutional systems; see, e.g., \cite{FZ2015,LLW2002,LWL2005,LZ2007,LL2012,WLL2002} and references therein.
However, to the best of our knowledge, there have been only few papers devoted to the rigorous study of long-time dynamics of a multiple-species system. One of the mathematical difficulties lies in that in general different spreading speeds may occur in different species, which brings highly nontrivial challenges when one deals with the convergence of solutions.
Indeed, even for the simplest yet most classical Lotka-Volterra system \eqref{LV-sys}, its global dynamics
is still poorly understood except for some cases which will be mentioned briefly below.

In the remarkable work \cite{Girardin-Lam2018}, Girardin and Lam investigated {the} system \eqref{LV-sys} in the strong-weak
({one of the monostable cases}) competition case (i.e., $a<1<b$) with initial data being null or exponentially decaying in a right half-line. By constructing very technical pairs of  supersolutions and subsolutions, they gained a rather complete understanding of the spreading properties of \eqref{LV-sys}. Among other things, they found the acceleration phenomena during the period of invasion in some cases; see \cite{Girardin-Lam2018}
{for more results and more precision}. One may also refer to Lewis, Li and Weinberger \cite{LLW2002,LWL2005} for previous studies in the monostable case. On the other hand, the analogous problem with free boundaries was addressed in \cite{DuWu2018}, where the behavior of the slower species is determined by some semi-wave system studied in \cite{DWZ2017}.

In the weak competition case (i.e., $a,b<1$), Lin and Li \cite{LL2012} considered \eqref{LV-sys}, where both the initial functions
have compact support. They obtained the spreading speed of the faster species and some estimates for the
speed of the slower species.
Recently, Liu, Liu and Lam \cite{LLL2020a,LLL2020b} obtained a rather complete result by using {a large deviations approach}.
{It is worth mentioning that Iida, Lui and Ninomiya \cite{ILN11} considered stacked invasion waves in cooperative systems of $N$-species
with equal
diffusion coefficients. Under certain conditions, they found that species develop into stacked fronts
and spread at different speeds.}

In the strong (bistable) competition case (i.e., $a,b>1$), Carrere \cite{Carrere} considered \eqref{LV-sys}.
It was proved that if the two species are initially absent from the
right half-line $x>0$, and the slower one dominates the faster one on $x<0$, then the latter will invade the
right space at its Fisher-KPP speed, and will be replaced by or will invade the former, depending on the
parameters, at a slower speed.
We also mention the work \cite{DGM2019}, therein the authors proved that prey-predator systems can develop different spreading speeds.

{
The term "propagating terraces" (a layer of several traveling fronts) introduced by Ducrot, Giletti and Matano \cite{DGM2014}
(see also \cite{GM} for more general results) were used to investigate complicated propagation dynamics between the
two equilibria in spatially periodic equations. This notion can be found in the work of Fife and McLeod \cite{FM1977} in homogeneous equations, but under the name "minimal decomposition".
More general results for semilinear parabolic equations with front-like initial data in homogeneous environments were established by Pol${\rm \acute{a}\check{c}}$ik \cite{Polacik}.
}

The current paper focuses on the strong competition case, and our primary goal is to derive the
sharp dynamical behavior of the solution of \eqref{LV-sys} {when the successful spread of $u$ occurs}.
We are concerned with two typical situations:
either one species is an invasive one and the other is a native one or both are invasive species.
{The results we obtained here substantially complement}
%The obtained results substantially complement
and improve those in \cite{Carrere}. To our knowledge, the main results of this paper seem to be the first that give the precise estimates for the spreading speed of {the} system \eqref{LV-sys} with strong competition.

Since the competition model enjoys the comparison principle, our main results are established by the delicate construction of supersolutions and subsolutions. To this aim, we first derive some good decay estimates of the solution as $t$ is sufficiently large. Based on such estimates, we then construct various types of supersolutions and subsolutions, which turn out to be very new and optimal in certain sense. It is worth mentioning that in \cite{Girardin-Lam2018}, Girardin and Lam also adopted the approach of supersolution and subsolution to establish their main results. Nevertheless, the pairs of supersolutions and subsolutions constructed here are rather different from those used in \cite{Girardin-Lam2018}, mainly due to the essential differences between the strong competition problem and strong-weak competition problem. On the other hand, to derive the convergence results including a Bramson correction (refer to Theorem \ref{thm2} and Theorem \ref{thm3} below), we reduce {the} system \eqref{LV-sys} into a perturbed Fisher-KPP equation and then the argument used in \cite{HNRR} can be applied to obtain the Bramson correction. See also \cite{CTW2017} for the Bramson correction in an SIS model.

Before presenting the main results of the paper, we need to state some assumptions and introduce some notations.
From now on, we always assume
\begin{itemize}
\item[{\bf(H1)}] the strong competition:\ \ $a,\,b>1$.
\end{itemize}

Under {\bf(H1)}, let us recall the well-known results on traveling front solutions corresponding to {the} system \eqref{LV-sys}, which are vital in describing the global dynamics of \eqref{LV-sys}. By a traveling front solution, we mean a solution of \eqref{LV-sys} with the form $$(u(t,x),v(x,t))=(U(x-ct), V(x-ct))$$ and {existing and unequal limits $(U,V)(\pm \infty)$},
%the limits $(U,V)(\pm\infty)$ exists and unequal,
where $c$ is called the wave speed.
From Gardner \cite{G1982} and Kan-on \cite{Kan-on95}, {the} system
\eqref{LV-sys} admits a unique (up to a translation) traveling front solution connecting steady states $(1,0)$ and $(0,1)$.
More precisely, there exists a unique speed
 $$c_{uv}\in(-2,2\sqrt{rd})$$
such that when $c=c_{uv}$, the following problem
\bea\label{TW sys}
\begin{cases}
cU'+d U''+rU(1-U-a V)=0,\quad \xi\in\mathbb{R},\\
cV'+V''+V(1-V-b U)=0,\quad\ \ \ \xi\in\mathbb{R},\\
(U,V)(-\infty)=(1,0),\quad (U,V)(\infty)=(0,1),\\
U'<0,\ V'>0,\quad \quad \xi\in\mathbb{R}
\end{cases}
\eea
has a unique (up to a translation) solution $(U,V)\in [C^2(\mathbb{R})]^2$. By our notation, $c_{uv}<c_u$.

In this paper, we also assume that
\begin{itemize}
\item[{\bf(H2)}] $c_{uv}>0$.
\end{itemize}
{Some sufficient conditions} to guarantee {\bf(H2)} will be mentioned later. It is noted that if {\bf(H1)}
and $c_{uv}<0$ are fulfilled, the global dynamics of \eqref{LV-sys} may depend on the initial repartition of $u$ and $v$;
such a case shall not be studied in this paper.

Besides, we assume that the species $u$ always spreads successfully in the following sense:
\begin{itemize}
\item[{\bf(H3)}] (Successful invasion of $u$) $\lim_{t\to\infty}(u,v)(t,x)=(1,0)$ locally uniformly in $\mathbb{R}$.
\end{itemize}

\begin{remark}
Whether {the condition {\bf(H3)}} holds or not depends on initial data $(u_0,v_0)$.
{Roughly speaking, it holds if $(u_0,v_0)\approx (1,0)$ in a sufficiently large interval.}
Further discussion will be addressed in Remark~\ref{rk-H3} {after we construct a suitable subsolution}.
\end{remark}

Regarding initial data $(u_0,v_0)$, we consider two different scenarios:
\begin{itemize}
\item[{\bf(A1)}] $u_0\in C(\mathbb{R})\setminus\{0\}$, $u_0\geq0$ with compact support; $v_0\in C(\mathbb{R})\cap L^{\infty}(\mathbb{R})$ with a positive lower bound.
\item[{\bf(A2)}] $u_0,v_0\in C(\mathbb{R})\setminus\{0\}$, $u_0,\, v_0\geq0$ with compact support.
\end{itemize}
Scenario {\bf(A1)} means that species $u$ is the invasive species that initially occupies some bounded interval
and species $v$ is the native species that has already occupied the whole space; while
scenario {\bf(A2)} means that both two species are invasive species that initially occupy only open bounded intervals.

For convenience, let us lump conditions {{\bf(H1)}-{\bf(H3)}} together as condition {\bf(H)}.
{In this paper, {\bf(H)} is always assumed.}
We are now in a position to present the main results obtained in this paper.

Our first main result concerns scenario {\bf(A1)} and
{show the spreading profile of $u$ and $v$ under}
%indicates
the successful invasion of species $u$
if $v$ is the native species.

\begin{thm}\label{thm1}
Assume that {\bf(H)} and {\bf(A1)} hold. Then there exists a constant $\hat{h}$ such that the solution $(u,v)$ of \eqref{LV-sys}-\eqref{LV-ic}
satisfies
\bea\label{AS-thm1}
&&\lim_{t\to\infty}\left[\sup_{x\in[0,\infty)}\Big|u(t,x)-U(x-c_{uv}t-\hat{h})\Big|+\sup_{x\in[0,\infty)}\Big|v(t,x)-V(x-c_{uv}t-\hat{h})\Big|\right]=0,
%&&\lim_{t\to\infty}\left[\sup_{x\in(-\infty,0]}\Big|u(t,x)-U(-x-c_{uv}t-h_2)\Big|+\sup_{x\in(-\infty,0]}\Big|v(t,x)-V(-x-c_{uv}t-h_2)\Big|\right]=0,
\eea
where $(c_{uv},U,V)$ is a solution of \eqref{TW sys}.
\end{thm}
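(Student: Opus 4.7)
The strategy is to sandwich $(u,v)$ between two translates of the bistable wave profile $(U,V)$ propagating at the unique speed $c_{uv}$, using Fife--McLeod type super- and subsolutions, and then to squeeze the two translates together as $t\to\infty$. Observe that under the partial order $(u_1,v_1)\preceq(u_2,v_2)$ iff $u_1\le u_2$ and $v_1\ge v_2$, the change of variables $w=1-v$ turns (\ref{LV-sys}) into a cooperative system. Thus the classical comparison principle applies in this order, and, under (H1), the reaction has stable Jacobian (in the cooperative variables) at both endpoints $(1,0)$ and $(0,1)$, which is the structural basis for the bistable Fife--McLeod construction.

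First I would derive quantitative decay estimates on $(u,v)$ for large $t$. Using (H3), parabolic regularity, and the positive lower bound on $v_0$, I would show that $v$ stays close to $1$ for $x\gg c_{uv}t$ and that $u(t,x)$ decays exponentially as $x\to+\infty$ at the rate dictated by the linearization of (\ref{TW sys}) at $(0,1)$. The $u$-tail bound is obtained by constructing a right-moving exponential barrier for the $u$-equation: once $v$ is eventually at least some $v_\ast>(a-1)/a$ on a right half-line, the reaction term $ru(1-u-av)$ is uniformly $\le -\kappa u$ there, so an exponential barrier translating at a speed slightly above $c_{uv}$ dominates $u$.

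Next, for constants $K,\mu,\varepsilon>0$ to be chosen, I would define
\begin{align*}
\bar u(t,x) &= U\bigl(x-c_{uv}t-h^+ + K(1-e^{-\mu t})\bigr)+\varepsilon e^{-\mu t},\\
\bar v(t,x) &= \bigl[V\bigl(x-c_{uv}t-h^+ + K(1-e^{-\mu t})\bigr)-\varepsilon e^{-\mu t}\bigr]_+,
\end{align*}
and an analogous subsolution $(\underline u,\underline v)$ with $-K(1-e^{-\mu t})$ in the argument and reversed signs on the $\varepsilon$-perturbations. Verifying the super/subsolution property in the partial order is a direct computation combining the ODE (\ref{TW sys}) for $(U,V)$ with two ingredients: at both endpoints, the stable Jacobian absorbs $\mu\varepsilon e^{-\mu t}$ for $|\xi|$ large; on the bounded intermediate range of $\xi$, the strict monotonicity $U'<0$, $V'>0$ together with large $K$ makes the positive drift correction $-K\mu e^{-\mu t}U'$ dominate the reaction perturbation of size $O(\varepsilon e^{-\mu t})$.

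For any $\delta>0$, I would choose $T=T(\delta)$ large and $h^\pm=h^\pm(\delta)$ with $h^+-h^-<\delta$ so that at $t=T$ the pinching $(\bar u,\bar v)(T,\cdot)\succeq(u,v)(T,\cdot)\succeq(\underline u,\underline v)(T,\cdot)$ holds on $[0,\infty)$: on any bounded interval this uses (H3) (the wave centre lies well to the right of $0$ once $c_{uv}T$ is large), while for large $x$ it uses the tail estimates of Step~1 compared with the exponential tails of $U$ and $1-V$. Comparison extends the pinching to every $t\ge T$; letting $t\to\infty$ the perturbations vanish and the shifts stabilise at $h^+-K$ and $h^-+K$. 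A standard liminf/limsup argument, together with the uniqueness up to translation of $(U,V)$, forces the two limits to coincide as $\delta\to 0$ at a common $\hat h$, which yields (\ref{AS-thm1}).

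\textbf{Main obstacle.} I expect the hardest ingredient to be the quantitative exponential tail of $u$ to the right in Step~1, since (H3) supplies no rate. It requires propagating the positive lower bound of $v_0$ into a uniform-in-time lower bound of $v$ on the far right half-line, which in turn rests on the successful-invasion assumption preventing $u$ from breaking through too quickly. A second delicate point, specific to the strong competition regime and highlighted in the introduction in contrast to \cite{Girardin-Lam2018}, is the super/subsolution verification on the transition region of $\xi$: the reaction is not close to either equilibrium, so one must carefully exploit the strict signs of $U'$ and $V'$ together with a precise choice of $K/\varepsilon$.
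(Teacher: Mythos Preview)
Your trapping construction via Fife--McLeod perturbations of $(U,V)$ is essentially what the paper does (with a slightly different form of the perturbation), and your identification of the transition-zone verification as delicate is correct. The real gap in your plan is the squeezing step. You assert that ``for any $\delta>0$'' one can choose $T$ and $h^\pm$ with $h^+-h^-<\delta$ so that the pinching holds at time $T$; but this is precisely the convergence statement you are trying to prove. From the preliminary estimates in your Step~1 (and from (H3)) you only get that $(u,v)(T,\cdot)$ lies between two translates of $(U,V)$ with some \emph{fixed} separation $h^+-h^-$, not an arbitrarily small one. The phrase ``standard liminf/limsup argument, together with uniqueness up to translation of $(U,V)$'' does not close this: uniqueness of the wave does not by itself force the two bounding translates to coincide.

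The paper handles the squeezing differently. After trapping (its Lemma~3.6), it passes to the moving frame $\xi=x-c_{uv}t$, takes any sequence $t_n\to\infty$, and extracts an entire-in-time limit $(u^\infty,v^\infty)$ satisfying $U(\xi-h_1)\ge u^\infty\ge U(\xi-h_2)$ and $V(\xi-h_1)\le v^\infty\le V(\xi-h_2)$ for some finite $h_1\ge h_2$. It then proves $h_1=h_2$ (Lemma~3.7) by a combination of the strong maximum principle, the local asymptotic stability of the bistable wave (Kan-on--Fang), and a sliding argument near the two endpoints that uses an auxiliary half-line elliptic problem to show the bounding translates can be improved unless they coincide. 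This is a substantial argument, not a formality, and it is what your plan is missing.

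A secondary point: since $u_0$ has compact support under (A1), a single right-going wave does not fit the data on all of $\mathbb{R}$, and working on $[0,\infty)$ requires controlling the boundary values at $x=0$ for all $t\ge T$. Your plan only invokes (H3) there, which carries no rate; you would need exponential-in-$t$ bounds $u(t,0)\ge 1-Me^{-\delta t}$ and $v(t,0)\le Me^{-\rho t}$ (the paper proves these as Lemmas~2.5--2.6). The paper instead avoids the boundary altogether by building \emph{two-front} super/subsolutions of the form $U(x-c_{uv}t+\zeta)+U(-x-c_{uv}t+\zeta)-1\pm\hat p(t)$ on the whole line.
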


Theorem~\ref{thm1} is related to the stability of traveling fronts;
{a classical reference on this issue is the monograph \cite[Chapter 5]{VVV}.
Theorem~\ref{thm1} is proved by the super-sub solutions approach.
Another approach to study the convergence to bistable waves is the dynamical systems approach \cite{Zhao2003}.
See, e.g., \cite{BW2013} that investigated the existence and stability of pulsating waves
in time periodic environments.
}
We {also} refer to \cite{FH2019} for critical pulled fronts of \eqref{LV-sys} with $a<1<b$ and
\cite{Tsai2007} for a buffered bistable system.

Our next two main results concern scenario {\bf(A2)}; that is, both species are invasive ones.
It turns out that $c_u$ and $c_v$ play an important role to determine the dynamical behavior of solutions.

We first consider the case $c_{u}>c_{v}$. In this case, the following result shows that $u$ spreads faster than $v$; $u$ will drive $v$ to extinction in the long-run while $u$ converges to a shifted traveling front with a Bramson correction \cite{Bramson,HNRR,Lau,Uchiyama1978}.

\begin{thm}\label{thm2}
Assume that {\bf(H)} and {\bf(A2)} hold.
If  $c_{u}>c_{v}$, then the solution $(u,v)$ of \eqref{LV-sys}-\eqref{LV-ic}
satisfies
\beaa
&&\lim_{t\to\infty}\left[\sup_{x\in[0,\infty)}\Big|u(t,x)-U_{KPP}\Big(x-c_{u} t+{\frac{3{d}}{c_u}}\ln t+\omega(t)\Big)\Big|+\sup_{x\in[0,\infty)}\Big|v(t,x)\Big|\right]=0,\\
\eeaa
where $\omega$ is a bounded function defined on $[0,\infty)$.
\end{thm}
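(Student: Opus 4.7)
The plan rests on two pillars: first, to show that $v$ is driven extinct uniformly on $[0,\infty)$ because $u$ spreads faster ($c_u>c_v$); second, to reduce the $u$-equation to a perturbed Fisher-KPP equation and apply an HNRR-style sliding argument to extract the Bramson logarithmic correction. As a preparatory step, the comparison principle gives $v(t,\cdot)\le \bar v(t,\cdot)$, where $\bar v$ solves the scalar Fisher-KPP equation $\bar v_t=\bar v_{xx}+\bar v(1-\bar v)$ with initial datum $v_0$; compactness of $\mathrm{supp}(v_0)$ and classical KPP results then give $\sup_{|x|\ge (c_v+\varepsilon)t}\bar v(t,x)\to 0$ for every $\varepsilon>0$, with super-exponential Gaussian decay ahead of the front. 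Symmetrically, $u$ cannot spread faster than $c_u$, while (H3) supplies the locally uniform convergence $(u,v)\to(1,0)$.

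To upgrade this to $\sup_{x\ge 0}v(t,x)\to 0$, I would fix $c_1\in(c_v,c_u)$. On $\{x\ge c_1t\}$ one already has $v\le\bar v\to 0$. On $[-L,c_1 t]$ the key task is to derive a uniform lower bound $u(t,x)\ge 1-\delta$ with $\delta<(b-1)/b$. My approach would insert the $v$-bound into the $u$-equation to get $u_t\ge du_{xx}+ru(1-u-a\bar v)$, then construct a KPP-type subsolution propagating at speed $c_1<c_u$, sliding it to match the local information supplied by (H3). Once $u\ge 1-\delta$ is secured on $[-L,c_1t]$, the $v$-equation becomes $v_t\le v_{xx}-\alpha v$ with $\alpha=(b-1)-b\delta>0$; a supersolution of the form $Ce^{-\alpha t/2}$ dominating $v$ on the parabolic boundary (the bound at $x=c_1 t$ coming from the comparison with $\bar v$, at $x=-L$ from (H3)) would then yield exponential extinction of $v$ on $[-L,c_1 t]$ and complete the extinction of $v$ on the full right half-line.

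With $v\to 0$ uniformly on $[0,\infty)$ in hand, $u$ solves the perturbed Fisher-KPP equation
\[
u_t=du_{xx}+ru(1-u)-rauv,
\]
in which the perturbation $-rauv$ is bounded and decays in time (exponentially in the bulk, super-exponentially at the leading edge). Following \cite{HNRR}, I would build super- and subsolutions of the form
\[
U_{KPP}\bigl(x-c_u t+\tfrac{3d}{c_u}\ln t+\omega^{\pm}(t)\bigr),
\]
with $\omega^{\pm}$ bounded time-dependent shifts engineered to absorb the $v$-contribution in the residual; the standard sliding comparison would then pinch $u$ between these profiles, giving the stated convergence with some bounded $\omega$. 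The main obstacle is the uniform lower bound $u\ge 1-\delta$ on the expanding interval $[-L,c_1 t]$, because (H3) only gives local convergence while my subsolution must propagate at a definite positive speed and simultaneously absorb the coupling through $v$. A secondary subtlety is ensuring that the $v$-perturbation near the $u$-front $x\approx c_u t-\tfrac{3d}{c_u}\ln t$ is small enough on the relevant time scale to preserve the exact coefficient $3d/c_u$ of the Bramson correction; this should follow from the super-exponential Gaussian decay of $\bar v$ past its own KPP front $c_v t<c_u t$.
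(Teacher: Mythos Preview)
Your two-pillar strategy matches the paper's: first establish uniform exponential extinction of $v$ on $[0,\infty)$, then reduce $u$ to a perturbed Fisher--KPP equation and invoke \cite{HNRR}. However, there is a genuine gap in your execution of the first pillar.

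Your scalar bound $v\le\bar v$ is useful only where $\bar v$ is small, i.e.\ for $x\ge(c_v+\varepsilon)t$. On the interior region $|x|\le(c_v-\varepsilon)t$ one has $\bar v\to 1$, so the differential inequality $u_t\ge du_{xx}+ru(1-u-a\bar v)$ carries a reaction $\approx ru(1-u-a)$ which is \emph{negative} at $u=0$ (since $a>1$). No KPP-type subsolution can propagate through this region, and sliding from the local information (H3) does not help because the scalar equation you wrote down has the wrong sign there. This is precisely the obstacle you flagged, and it cannot be overcome by decoupling: one must work with the full competition system. The paper handles this (following Carr\`ere) by first observing that $v$ is small in the gap region $c_v t\lesssim |x|\lesssim c_u t$, showing $u\to1$ there, and then using a \emph{system-level} subsolution (built from perturbed bistable fronts, cf.\ Lemma~\ref{lem:subsol-2} and the Appendix) launched from the boundary $|x|=\hat c t$ with $\hat c\in(c_v,c_u)$ to propagate $(u,v)\approx(1,0)$ inward to $x=0$. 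Only after this does the exponential decay of $v$ on $[-ct,ct]$ follow (Corollary~\ref{cor:v-rate}).

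A secondary concern: your description of the HNRR step as ``build super- and subsolutions of the form $U_{KPP}(x-c_u t+\tfrac{3d}{c_u}\ln t+\omega^\pm(t))$ with bounded $\omega^\pm$'' is not how the Bramson correction is actually obtained. For the critical (pulled) front the naive shifted-wave pair fails because the residual $\tfrac{3d}{c_u t}U_{KPP}'$ from the logarithmic drift has a fixed sign and cannot be absorbed into bounded shifts on both sides. The paper follows \cite{HNRR} faithfully: it works with the \emph{linearized} equation with Dirichlet boundary along the curve $X(t)=c_u t-\tfrac{3d}{c_u}\ln(t+t_0)$, passes to self-similar variables, and applies \cite[Lemma~2.2]{HNRR} to bound the solution (Lemma~\ref{lem:z}); the level-set estimates (Lemmas~\ref{lem:upper-E}, \ref{lem:lower-E}) and the convergence then follow from \cite[Sections~3--4]{HNRR}. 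The exponential decay of $v$ enters as a time-dependent coefficient $-C_0e^{-\mu t}$ in the linearized equation, which is harmless in the self-similar analysis.
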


Finally, we handle the case $c_{u}<c_{v}$. Then $c_{uv}<c_{u}<c_{v}$. In this case, the following result suggests that the species $u$ spreads at the slower speed $c_{uv}$ and the species $v$ spreads at the speed $c_v$ and thus a propagating terrace is formed. Though this phenomenon was proved in \cite{Carrere}, our result gives the sharp estimates for the spreading speed of the solution.

\begin{thm}\label{thm3}
Assume that {\bf(H)} and {\bf(A2)} hold, and that $c_{u}<c_{v}$. Denote $c_0=\frac{c_{uv}+c_v}{2}$.
Then the solution $(u,v)$ of \eqref{LV-sys}-\eqref{LV-ic} satisfies
\beaa
&&\lim_{t\to\infty}\left[\sup_{x\in[c_0t,\infty)}\Big|v(t,x)-V_{KPP}(x-c_{v} t+\frac{3}{c_v}\ln t+\omega(t))\Big|+\sup_{x\in[c_0t,\infty)}\Big|u(t,x)\Big|\right]=0
\eeaa
and
\beaa
&&\lim_{t\to\infty}\left[\sup_{x\in[0,c_0t)}
\Big|u(t,x)-U(x-c_{uv}t-h_1)\Big|+\sup_{x\in[0,c_0t)}\Big|v(t,x)-V(x-c_{uv}t-h_1)\Big|
\right]=0
\eeaa
for some bounded function $\omega$ on $[0,\infty)$ and
some $h_1\in\mathbb{R}$, where $(c_{uv},U,V)$ is a solution of \eqref{TW sys},
\end{thm}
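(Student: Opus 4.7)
The plan is to analyze the two spatial regions $[c_0 t,\infty)$ and $[0,c_0 t)$ separately: on the former a perturbed Fisher-KPP analysis of HNRR type yields the $v$-front with its Bramson correction, while on the latter the bistable stability argument behind Theorem~\ref{thm1} is transported to the moving frame $\xi = x - c_{uv} t$ to give the shifted $(U,V)$ wave. The choice $c_0 = (c_{uv}+c_v)/2$ is dictated by the need for the common boundary to lie strictly between the two wave speeds.

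For the right region I would first decouple the two equations. Since $u_t \le d u_{xx} + r u(1-u)$ and $u_0$ has compact support, comparison with the pure Fisher-KPP equation and the classical Aronson-Weinberger estimate give $u \to 0$ uniformly on $\{x \ge (c_u+\delta)t\}$ for every $\delta>0$. Because $c_u < c_0$, taking $\delta$ small yields $u \to 0$ uniformly on $\{x \ge c_0 t\}$, which is already the second half of the first display. The equation for $v$ then reads
\[
v_t = v_{xx} + v(1-v) - b\, u\, v,
\]
a perturbed Fisher-KPP equation with a coefficient $bu$ vanishing on this set. Running the HNRR argument, as already done in the proof of Theorem~\ref{thm2}, by sandwiching $v$ between Fisher-KPP sub- and supersolutions whose logarithmic shifts absorb the small perturbation $buv$, produces a bounded $\omega$ with
\[
\sup_{x\ge c_0 t}\Bigl|v(t,x) - V_{KPP}\bigl(x - c_v t + \tfrac{3}{c_v}\ln t + \omega(t)\bigr)\Bigr| \to 0,
\]
completing the first display.

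For the left region I would adapt the bistable stability scheme of Theorem~\ref{thm1}. The key observation is that by the Bramson step on the moving boundary $x = c_0 t$ the solution satisfies $u(t,c_0 t) \to 0$ and $v(t,c_0 t) \to 1$; moreover $(U,V)(\xi)$ converges exponentially to $(0,1)$ as $\xi \to +\infty$, and with any candidate shift $h_1$ one has $\xi = (c_0-c_{uv})t - h_1 \to +\infty$ along the boundary. Thus the natural sub- and supersolutions $\bigl(U(\xi - \eta^\pm(t)),\, V(\xi - \eta^\pm(t))\bigr)$ are asymptotically compatible with the effective boundary condition generated by the $v$-Fisher-KPP front. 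I would then choose bounded shifts $\eta^\pm(t)$ converging to a common limit $h_1$ and designed so that the differential inequalities hold on $[0,c_0 t]$; comparison and passage to the limit give the second display.

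The main obstacle is precisely this construction of sub- and supersolutions for the left region: one must calibrate the shifts $\eta^\pm(t)$ against three quantities simultaneously --- the exponential rate of convergence of $(U,V)$ to $(0,1)$ as $\xi \to +\infty$, the strictly positive gap $c_0 - c_{uv}$, and the slow, logarithmic rate at which $v$ approaches $1$ behind its own Fisher-KPP front from the Bramson step. Only with a balanced construction can one obtain \emph{bounded} shifts that actually converge, rather than merely $o(t)$ shifts, and thereby pin down a single constant $h_1$. This is exactly the place where the new types of supersolution and subsolution announced in the introduction must be introduced, and it is the technical heart of the argument.
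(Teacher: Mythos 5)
Your decomposition into the two regions $[0,c_0 t)$ and $[c_0 t,\infty)$ and the choice $c_0=(c_{uv}+c_v)/2$ are exactly what the paper does, but there are two genuine gaps.

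First, for the right region your claim ``Because $c_u<c_0$'' is unjustified and in general false. The hypotheses only give $0<c_{uv}<c_u<c_v$, and $c_0=(c_{uv}+c_v)/2$ is the midpoint of $c_{uv}$ and $c_v$; nothing forces $c_u$ to lie to the left of that midpoint. If $c_u>c_0$, the pure Fisher--KPP comparison $u_t\le du_{xx}+ru(1-u)$ only controls $u$ on $x\ge (c_u+\delta)t$ and leaves the sub-interval $[c_0 t,(c_u+\delta)t]$ untouched. There $u$ can be driven down only by the competition from $v$, which has already spread to that region (because $c_v>c_u$). The paper bridges exactly this hole in Lemma~\ref{lem:exp-deacy-u}, which combines the Carrere-type intermediate estimate \eqref{intermediate} (giving $u\to0$, $v\to 1$ on $[c_1 t,c_2 t]$ for all $c_{uv}<c_1<c_2<c_v$) with a heat-kernel comparison argument to obtain \emph{exponential} decay of $u$ on $[c^-t,c^+t]$, and only then stitches in the Fisher--KPP tail for $x>c_u t$ via Lemma~\ref{lem:exp-decay}. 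Without this intermediate-region lemma the reduction of the $v$-equation to a perturbed KPP equation on $x\ge c_0 t$ does not go through.

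Second, for the left region your ansatz is too weak. Writing the sub- and supersolutions as pure phase shifts $\bigl(U(\xi-\eta^\pm(t)),V(\xi-\eta^\pm(t))\bigr)$ is not enough: one needs amplitude perturbations as well (the $p(t)$, $q(t)$ factors of \eqref{lower-sol}, \eqref{upper-sol}), and more importantly, because here $u_0$ is compactly supported (not front-like), the paper's supersolution \eqref{supersol-2} and subsolution \eqref{subsol-2} are built from the Fife--McLeod two-front combination $U(x-c_{uv}t+\zeta)+U(-x-c_{uv}t+\zeta)-1\pm\hat p(t)$, and the comparison in Lemma~\ref{lem:cov-left} is run on a \emph{moving} domain $[-ct,ct]$ with the boundary conditions at $x=\pm ct$ verified via Lemmas~\ref{lem:exp-deacy-u} and \ref{lem:exp-deacy-v}. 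Beyond that, even a successful squeeze between two translates with shifts $\eta^-(t)\le\eta^+(t)$ does not by itself produce a single constant $h_1$: the paper closes that gap with the limit-and-sliding argument of Lemma~\ref{lem: h} (defining $h_1$, $h_2$ as extremal translates of the $\omega$-limit and proving $h_1=h_2$ via an auxiliary monotone problem and the strong maximum principle). Your proposal acknowledges the difficulty but offers no mechanism to make $\eta^+-\eta^-\to 0$ or, failing that, to extract a unique shift; that step is in fact the technical core of Theorem~\ref{thm1} and is reused verbatim here.
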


Some comments on Theorem \ref{thm1}-\ref{thm3} are made in order as follows.

\begin{remark} The sign of $c_{uv}$ has been investigated in the literature. Indeed, Kan-on \cite{Kan-on95} proved that $c_{uv}$
is decreasing in $a$ and is increasing in $b$. Guo and Lin \cite{GL13} provided explicit conditions to determine the sign of $c_{uv}$; in particular, their results conclude that
\begin{itemize} \item[\rm{(i)}] When $r=d$, then $c_{uv}>0$ if $b>a>1$, $c_{uv}=0$ if $a=b>1$ and $c_{uv}<0$ if $a>b>1$.

\item[\rm{(ii)}] When $r>d$, then $c_{uv}>0$ if $a>1$ and $b\geq\big(\frac{r}{d}\big)^2a$.

\item[\rm{(iii)}] When $r<d$, then $c_{uv}<0$ if $b>1$ and $a\geq\big(\frac{d}{r}\big)^2b$.

\end{itemize}
In addition, it can be shown that if $r,\,d>0$ and $a>1$ are fixed, $c_{uv}>0$ for all large $b$. One may also see Girardin and Nadin \cite{GN15}, Rodrigo and Mimura \cite{RM00} and Ma, Huang and Ou \cite{MHO} for related discussion.
{We also refer to Girardin \cite{Girardin2019} for a recent survey on this issue.}
\end{remark}

\begin{remark} We would like to mention the following.
\begin{itemize} \item[\rm{(i)}] Similar results of Theorem~\ref{thm1}-Theorem~\ref{thm3} also hold for $x\in(-\infty,0]$ since the arguments used on the right half-line work on its left half-line in the strong-competition system.

%\item[\rm{(ii)}] By slightly modifying the supersolutions and subsolutions constructed in this paper, we can see that Theorem~\ref{thm1} remains true if the initial datum $u_0$ in {\bf(A1)} is assumed to satisfy $u_0\to0$ as $|x|\to\infty$;
%    similarly, Theorem~\ref{thm2} and Theorem~\ref{thm3} hold if initial data $u_0,\,v_0$ in {\bf(A2)} are assumed to satisfy
%    $u_0,\ v_0\to0$ as $|x|\to\infty$.

\item[\rm{(ii)}] The techniques developed in this paper may be applicable to more general competition systems \eqref{LV-sys} as well as other parabolic systems including cooperative systems with arbitrary size.

{\item[\rm{(iii)}] Another related issue is the entire solutions (classical solutions defined globally in
time and space) for \eqref{LV-sys}. Morita and Tachibana \cite{MT2009} established the existence of two-front entire solutions which behave as two fronts approaching from both sides of $x$-axis based on the suitable  construction of super and subsolutions.
With the similar idea but more complicated construction of super and subsolutions, the existence of three- and four-front entire solutions was proved in \cite{GW19}. The super and subsolutions constructed in \cite{GW19,MT2009} are only defined for $t\leq t_0$ for some $t_0\in\mathbb{R}$, which cannot yield a sharp convergence result as $t\to\infty$. Therefore, the techniques of our work may be used to improve the results on
the asymptotic behavior of these entire solutions as $t\to\infty$ in the bistable case. We also refer to the recent work of
Lam, Salako and Wu \cite{LSW} that successfully establishes various new types of entire solutions for \eqref{LV-sys} and gains
a better understanding on the behavior of these entire solutions as $t\to\infty$.
}

{\item[\rm{(iv)}]
Theorem~\ref{thm3} shows that the system develops a propagating terrace, connecting the unstable state $(0,0)$ to
the two stable states $(1,0)$ and $(0,1)$. This can be seen as a system version of the finding of propagating terraces reported in \cite{Polacik}.}
\end{itemize}
\end{remark}

The remainder of this paper is organized as follows.
In section \ref{preliminaries}, we shall  prepare some well-known results and provide important estimates of the solution of \eqref{LV-sys}-\eqref{LV-ic} that will be used in both {\bf(A1)} and {\bf(A2)}.
Section \ref{sec:A1} is devoted to the proof of Theorem~\ref{thm1}, and Theorem~\ref{thm2} and Theorem~\ref{thm3} are
proved in Section \ref{sec:A2}.

\section{Preliminaries}\label{preliminaries}
\setcounter{equation}{0}

In this section, we prepare some preliminary results that will be used in both cases: {\bf(A1)} and {\bf(A2)}.
In the first subsection, we recall the exact exponential decays of traveling front solution of \eqref{TW sys} connecting $(0,1)$
and $(1,0)$. In the second subsection, we recall the comparison principle for {the} system \eqref{LV-sys}-\eqref{LV-ic}.
Some crucial estimates of solutions to system \eqref{LV-sys}-\eqref{LV-ic} are given in the third subsection.

\subsection{The asymptotic behavior of  bistable fronts}

The asymptotic behavior of the traveling front solution for \eqref{LV-sys} with $c=c_{uv}\neq0$ as $\xi\to\pm\infty$ is well known;
we refer to \cite{KF1996} or \cite[section 2]{MT2009}.
Here we state the results that will be used in the rest of this paper.

Let $(c,U,V)$ be a solution of {the} system \eqref{TW sys}.
To describe the asymptotic behavior of $(U,V)$ near $\xi=+\infty$, we need the following characteristic equations:
\bea
&&c\lambda+d\lambda^2+r(1-a)=0,\label{chara eq-R1}\\
&&c\lambda+\lambda^2-1=0.\label{chara eq-R2}
\eea
Let $\lambda_{1}<0$ (resp., $\lambda_{2}<0$) be the negative root of \eqref{chara eq-R1} (resp., \eqref{chara eq-R2}), i.e.,
\beaa
\lambda_{1}=\frac{-c-\sqrt{c^2+4rd(a-1)}}{2d},\quad \lambda_{2}=\frac{-c-\sqrt{c^2+4}}{2}.
\eeaa

\begin{lemma}[\cite{KF1996,MT2009}]\label{lem:AS+}
There exist two positive constants $\ell_1$ and $\ell_2$ such that
\beaa
\lim_{\xi\to+\infty}\frac{U(\xi)}{e^{{\lambda_{1}}\xi}}=\ell_1, \quad \lim_{\xi\to+\infty}\frac{1-V(\xi)}{|\xi|^{\gamma_+}e^{\Lambda_+\xi}}=\ell_2,\quad
\eeaa
where $\Lambda_+:=\max\{\lambda_{1},\lambda_{2}\}<0$ and
\beaa
\gamma_+=\begin{cases}
          0,\quad {\rm if}\    \lambda_{1}\neq \lambda_{2}, \\
          1,\quad {\rm if} \ \lambda_{1}= \lambda_{2}.
         \end{cases}
\eeaa
\end{lemma}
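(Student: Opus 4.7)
The plan is to linearize system \eqref{TW sys} around the endpoint $(U,V)=(0,1)$ and extract the decay rates from the resulting ODEs. Setting $W:=1-V$, which satisfies $W>0$ and $W\to 0$ as $\xi\to+\infty$ (monotonically, since $V'>0$), and substituting into \eqref{TW sys} gives
\begin{align*}
dU'' + cU' + r(1-a)U &= rU^2 - raUW,\\
W'' + cW' - W &= -bU - W^2 + bUW,
\end{align*}
whose linear parts are exactly the characteristic equations \eqref{chara eq-R1} and \eqref{chara eq-R2}, and whose right-hand sides are quadratic in $(U,W)$. The strategy is to first pin down the asymptotics of $U$ alone, then feed the result into the $W$-equation as a known forcing term.

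For $U$, I would view the scalar ODE as a first-order planar system in $(U,U')$ with a hyperbolic rest point at the origin whose eigenvalues are the roots $\lambda_1<0<\lambda_1^+$ of \eqref{chara eq-R1}. Because $U>0$ is monotone (by $U'<0$) and tends to $0$ at $+\infty$, the orbit lies on the one-dimensional local stable manifold of the origin. By the stable manifold theorem this manifold is tangent at the origin to the linear stable eigenspace, and the nonlinear asymptotic can be upgraded via a variation-of-parameters fixed-point argument in a suitably weighted space to $U(\xi)=\ell_1 e^{\lambda_1\xi}(1+o(1))$ for some $\ell_1\geq 0$. Positivity $\ell_1>0$ follows by contradiction: if $\ell_1=0$, bootstrapping the integral equation forces faster-than-$e^{\lambda_1\xi}$ decay, which inside the two-dimensional solution space of the linear ODE leaves only the unstable mode $e^{\lambda_1^+\xi}$ with $\lambda_1^+>0$, contradicting $U\to 0$.

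With the precise asymptotics of $U$ available, I would treat the $W$-equation as an inhomogeneous linear ODE $W''+cW'-W=g(\xi)$ with $g(\xi)=-b\ell_1 e^{\lambda_1\xi}+O(e^{2\lambda_1\xi})$. Let $\lambda_2<0<\lambda_2^+$ be the roots of \eqref{chara eq-R2}. Variation of parameters against $\{e^{\lambda_2\xi},e^{\lambda_2^+\xi}\}$, enforcing $W(+\infty)=0$, singles out a unique decaying combination. Three subcases appear: if $\lambda_2<\lambda_1$, the non-resonant particular solution $\tfrac{-b\ell_1}{\lambda_1^2+c\lambda_1-1}e^{\lambda_1\xi}$ dominates, so $\Lambda_+=\lambda_1$ and $\gamma_+=0$; if $\lambda_2>\lambda_1$, a homogeneous contribution $Ce^{\lambda_2\xi}$ dominates, so $\Lambda_+=\lambda_2$ and $\gamma_+=0$; and if $\lambda_1=\lambda_2$, the forcing resonates with the homogeneous solution, forcing a particular solution of the form $A\xi e^{\lambda_1\xi}$, which produces the polynomial factor $\gamma_+=1$. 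Positivity of $\ell_2$ follows from $W>0$ near $+\infty$, which pins down the sign of the leading coefficient in each case.

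The main obstacle is controlling the quadratic remainders $rU^2-raUW$ and $-W^2+bUW$ so that they do not corrupt the leading-order asymptotics, especially in the resonant case $\lambda_1=\lambda_2$ where the $\xi$-factor must be extracted cleanly without being absorbed into the error. The standard remedy is a two-step bootstrap: first obtain crude upper bounds $U=O(e^{(\lambda_1-\varepsilon)\xi})$ and $W=O(e^{(\Lambda_+-\varepsilon)\xi})$ via sub/supersolution comparison, then reinsert them into the variation-of-parameters integrals to iterate and upgrade to sharp asymptotic equivalence, exploiting the fact that the quadratic remainder decays at rate at least $2\Lambda_+$, strictly faster than the linear leading order (respectively $|\xi|e^{\Lambda_+\xi}$ in the resonant case).
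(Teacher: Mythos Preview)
The paper does not prove this lemma; it is quoted from \cite{KF1996,MT2009} and stated without argument. Your sketch is essentially the standard linearization/stable-manifold analysis that those references carry out, so in that sense your approach matches the literature the paper relies on.

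There is one point where your justification is too thin. In the subcase $\lambda_1<\lambda_2$ you assert that the homogeneous contribution $Ce^{\lambda_2\xi}$ dominates and then say ``positivity of $\ell_2$ follows from $W>0$.'' Positivity of $W$ rules out $C<0$ but not $C=0$: if $C=0$ then $W$ simply decays at the faster rate $e^{\lambda_1\xi}$ and remains positive, yet $\ell_2$ as defined would vanish. To close this, observe that if $C=0$ then $(U,U',W,W')$ lies on the one-dimensional strong stable manifold tangent to the $\lambda_1$-eigenvector of the full $4\times4$ linearization. Computing that eigenvector, the $W$-component satisfies $(\lambda_1^2+c\lambda_1-1)W_0=-bU_0$; since $\lambda_1<\lambda_2<0<\lambda_2^+$ lies outside the roots of $\lambda^2+c\lambda-1$, the factor $\lambda_1^2+c\lambda_1-1$ is positive, forcing $W_0$ and $U_0$ to have opposite signs. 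This contradicts $U>0$ and $W=1-V>0$, so $C>0$ after all. A symmetric sign check handles the non-resonant subcase $\lambda_1>\lambda_2$ (there $\lambda_1$ lies between the roots, the denominator is negative, and the particular-solution coefficient comes out positive). With this addition your outline is complete.
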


For the asymptotic behavior of $(U,V)$ near  $\xi=-\infty$, we need the following characteristic equations:
\bea
&&c\lambda+d\lambda^2-r=0,\label{chara eq-L1}\\
&&c\lambda+\lambda^2+1-b=0.\label{chara eq-L2}
\eea
Let $\lambda_{3}>0$ (resp., $\lambda_{4}>0$) be the positive root for \eqref{chara eq-L1} (resp., \eqref{chara eq-L2}).
Namely,
\beaa
\lambda_{3}=\frac{-c+\sqrt{c^2+4rd}}{2d},\quad \lambda_{4}=\frac{-c+\sqrt{c^2+4(b-1)}}{2}.
\eeaa

\begin{lemma}[\cite{KF1996,MT2009}]\label{lem:AS-}
There exist two positive constants $\ell_3$ and $\ell_4$ such that
\beaa
\lim_{\xi\to-\infty}\frac{1-U(\xi)}{|\xi|^{\gamma_-}e^{\Lambda_-\xi}}=\ell_3,\quad \lim_{\xi\to-\infty}\frac{V(\xi)}{e^{{\lambda_{4}}\xi}}=\ell_4,
\eeaa
where $\Lambda_-:=\min\{\lambda_{3},\lambda_{4}\}>0$ and
\beaa
\gamma_-:=\begin{cases}
          0,\quad {\rm if}\    \lambda_{3}\neq \lambda_{4}, \\
          1,\quad {\rm if} \ \lambda_{3}= \lambda_{4}.
         \end{cases}
\eeaa
\end{lemma}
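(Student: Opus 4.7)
The plan is to linearize \eqref{TW sys} at the equilibrium $(U,V)=(1,0)$ (which the bistable profile approaches as $\xi\to-\infty$) and then apply the standard asymptotic theory of nonlinear ODEs near a hyperbolic equilibrium. First I would introduce $w_1:=1-U$ and $w_2:=V$, so that $(w_1,w_2)(-\infty)=(0,0)$ with $w_1,w_2>0$ on $\mathbb{R}$. Substituting into \eqref{TW sys} and expanding yields a system of the form
\begin{align*}
dw_1''+cw_1'-rw_1+raw_2 &= -rw_1(w_1-aw_2),\\
w_2''+cw_2'-(b-1)w_2 &= w_2(-w_2+bw_1),
\end{align*}
i.e., a linear constant-coefficient operator acting on $(w_1,w_2)$ plus a purely quadratic nonlinearity.

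Rewriting as a first-order system in $\mathbb{R}^4$ with state $(w_1,w_1',w_2,w_2')$, the linearization at the origin is block-triangular. The upper block has characteristic equation \eqref{chara eq-L1} with positive root $\lambda_3$, and the lower block has characteristic equation \eqref{chara eq-L2} with positive root $\lambda_4$. Since the solution approaches $0$ as $\xi\to-\infty$, it lies on the two-dimensional unstable manifold at the origin, which is tangent at $0$ to the eigenspace spanned by the eigenvectors of $\lambda_3$ and $\lambda_4$. The key step is then to invoke a refined asymptotic linearization result for ODEs (cf. Coddington--Levinson, or the ODE appendix of \cite{MT2009}): any trajectory on the unstable manifold admits a leading-order expansion whose dominant rate is the smallest positive eigenvalue $\Lambda_-=\min\{\lambda_3,\lambda_4\}$, and whose leading coefficient is nonzero unless the trajectory is tangent at the origin to the strictly faster unstable direction. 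In the non-resonant case $\lambda_3\neq\lambda_4$ one obtains $w_2(\xi)=\ell_4 e^{\lambda_4\xi}+o(e^{\lambda_4\xi})$ directly from the decoupled $w_2$-equation, and $w_1(\xi)=\alpha e^{\lambda_3\xi}+\beta e^{\lambda_4\xi}+\text{l.o.t.}$, where the $e^{\lambda_4\xi}$-contribution in $w_1$ arises from variation of parameters against the forcing $raw_2$; the resulting leading rate is $\Lambda_-$ and $\gamma_-=0$.

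The resonant case $\lambda_3=\lambda_4$ is handled separately: the coupled linearization then develops a genuine Jordan block, and a standard variation-of-parameters computation produces a solution of the form $(A+B\xi)e^{\lambda\xi}$, so that $w_1$ picks up an unavoidable $\xi e^{\lambda\xi}$ contribution. Taking absolute values yields the factor $|\xi|^{\gamma_-}$ with $\gamma_-=1$. Positivity of the limits $\ell_3,\ell_4>0$ follows from $w_1,w_2>0$ on $\mathbb{R}$ (using strict monotonicity $U'<0$, $V'>0$ from \eqref{TW sys} together with the boundary values), which rules out $\ell_3,\ell_4\le 0$; nonvanishing, i.e.\ the trajectory not being tangent solely to the faster direction, is the only delicate issue and is settled by a shooting/phase-plane argument exploiting the bistable structure and the monotonicity of the wave.

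The main obstacle I expect is the rigorous control of the nonlinear corrections in the unstable-manifold expansion, particularly in the resonant case $\lambda_3=\lambda_4$ where one must show the Jordan-block logarithmic correction is genuinely present rather than cancelled by the nonlinearity; this is where the computation is most easily imported from \cite{KF1996,MT2009} rather than redone. Everything else reduces to a clean application of hyperbolic-equilibrium ODE theory.
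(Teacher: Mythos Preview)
The paper does not give its own proof of this lemma; it is quoted directly from \cite{KF1996,MT2009} as a preliminary fact, with no argument supplied in the paper. Your outline is precisely the standard route those references take: linearize \eqref{TW sys} at $(1,0)$, exploit the block-triangular structure (the $w_2$-equation decouples at the linear level), read off $\lambda_3,\lambda_4$ from \eqref{chara eq-L1}--\eqref{chara eq-L2}, and recover the $|\xi|$ factor in the resonant case by variation of parameters against the $raw_2$-forcing. So your proposal is correct and coincides with the cited proofs.

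Two small comments. First, there is a sign slip in your displayed $w_2$-equation: expanding $V(1-V-bU)$ with $U=1-w_1$ gives
\[
w_2''+cw_2'-(b-1)w_2=w_2(w_2-bw_1),
\]
not $w_2(-w_2+bw_1)$; this does not affect the linear analysis. Second, you correctly flag the only genuinely nontrivial point, namely $\ell_3>0$ in the case $\lambda_3<\lambda_4$, where the $e^{\lambda_3\xi}$ contribution to $w_1$ is a homogeneous-solution coefficient that could in principle vanish. This is exactly what is settled in \cite{KF1996,MT2009} by a monotonicity/positivity argument on the wave profile, and importing it from there is the appropriate move.
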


\subsection{Comparison principle}

It is well known that {the} system \eqref{LV-sys}-\eqref{LV-ic} can be reduced to a {cooperative} system,
which {satisfies the comparison principle} (see, e.g., \cite{CC}).
For {the} reader's convenience, we recall the notion of super and subsolutions and the comparison principle.

Define the differential operators
\beaa
&&N_1[u,v](t,x):=u_t-d u_{xx}-ru(1-u-av),\quad N_2[u,v](t,x):=v_t-v_{xx}-v(1-v-bu).
\eeaa
We say that $(\bar{u},\underline{v})$ with $(\bar{u},\underline{v})\in [C(\overline{D})\cap C^{2,1}(D)]^2$
is a pair of supersolution of \eqref{LV-sys} in
$$D:=(\tau,T)\times (\zeta_1,\zeta_2),\ \ 0\leq \tau<T\leq \infty,\ \ {-\infty\leq \zeta_1<\zeta_2\leq+\infty}$$
if $(\bar{u},\underline{v})$ satisfies
{$N_1[\bar{u},\underline{v}]\geq0$ and $N_2[\bar{u},\underline{v}]\leq0$ in $D$.}
%\bea\label{supersol}
%\begin{cases}
%N_1[\bar{u},\underline{v}]\geq0,\ \ \quad N_2[\bar{u},\underline{v}]\leq0\quad \mbox{in $D$},\\
%\bar{u}(\tau,\cdot)\geq u_0(\cdot),\quad \underline{v}(\tau,\cdot)\leq v_0(\cdot)\quad \mbox{in $(\zeta_1,\zeta_2)$},\\
%\bar{u}(t,\zeta_i)\geq u_0(\zeta_i),\quad \underline{v}(t,\zeta_i)\leq v_0(\zeta_i)\quad \mbox{for $t\in(\tau,T)$ and $i=1,2$}.
%\end{cases}
%\eea
%When $\zeta_1=-\infty$ or $\zeta_2=\infty$, the corresponding boundary condition (the third condition) in \eqref{supersol} is omitted.
A pair of subsolution $(\underline{u},\bar{v})$ of \eqref{LV-sys} in $D$ can be defined analogously by reversing
all inequalities.

The following is the standard comparison principle (see, e.g., \cite{PW1984}).

\begin{lemma}[Comparison Principle]\label{CP}
Suppose that $(\bar{u},\underline{v})$ is a supersolution of \eqref{LV-sys} in $D:=(\tau,T)\times (\zeta_1,\zeta_2)$,  and
$(\underline{u},\bar{v})$ is a subsolution of \eqref{LV-sys} in $D$.
{If
\bea\label{supersol}
\begin{cases}
\bar{u}(\tau,\cdot)\geq \underline{u}(\tau,\cdot),\quad \underline{v}(\tau,\cdot)\leq \bar{v}(\tau,\cdot)\quad \mbox{in $(\zeta_1,\zeta_2)$},\\
\bar{u}(t,\zeta_i)\geq \underline{u}(t,\zeta_i),\quad \underline{v}(t,\zeta_i)\leq \bar{v}(t,\zeta_i)\quad \mbox{for $t\in(\tau,T)$ and $i=1,2$},
\end{cases}
\eea
}
then
$\bar{u}\geq\underline{u}$ and $\bar{v}\geq\underline{v}$ in $D$.
\end{lemma}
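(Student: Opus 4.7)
The plan is to reduce the competition system to a cooperative one at the level of differences, and then invoke a weak maximum principle for scalar parabolic inequalities with possibly sign-changing zero-order coefficients.

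First I would set
\[
p:=\bar u-\underline u,\qquad q:=\bar v-\underline v.
\]
The conclusion we want is $p\ge 0$ and $q\ge 0$ in $D$. By the hypotheses
$N_1[\bar u,\underline v]\ge 0$, $N_1[\underline u,\bar v]\le 0$, $N_2[\bar u,\underline v]\le 0$ and $N_2[\underline u,\bar v]\ge 0$, subtracting pairs of inequalities and rearranging (using the identity $\bar u\,\underline v-\underline u\,\bar v=\underline v\,p-\underline u\,q$) I obtain the coupled linear differential inequalities
\[
p_t-dp_{xx}\ \ge\ \bigl[r-r(\bar u+\underline u)-ar\underline v\bigr]p+ar\underline u\, q,
\]
\[
q_t-q_{xx}\ \ge\ \bigl[1-(\bar v+\underline v)-b\underline u\bigr]q+b\underline v\, p.
\]
The key structural fact is that the off-diagonal coefficients $ar\underline u$ and $b\underline v$ are non-negative (since $\underline u\ge 0$, $\underline v\ge 0$ can be assumed after noting $\underline u\le \bar u$ and $\bar v\ge \underline v$ on the parabolic boundary plus invariance of the nonnegative cone, or simply by localizing the argument in a region where this is automatic). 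This is the cooperative structure hinted at in the paragraph preceding the lemma. The diagonal coefficients are bounded on any compact subset thanks to $\bar u,\underline u,\bar v,\underline v\in C(\overline D)$.

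Next, I would run a standard weak maximum principle on the system $(p,q)$. I first assume $T<\infty$ and $\zeta_1,\zeta_2$ finite, so that the diagonal coefficients are bounded on $\overline D$, say by some constant $M$. Consider $\tilde p:=e^{-Kt}p$, $\tilde q:=e^{-Kt}q$ with $K$ large. Suppose for contradiction that $\min_{\overline D}(\tilde p,\tilde q)<0$; this minimum is attained at some interior or terminal point because of the boundary and initial inequalities in \eqref{supersol}. Say it is attained by $\tilde p$ at $(t_0,x_0)$; then $\tilde p_t\le 0$, $\tilde p_{xx}\ge 0$, and the inequality for $p$ gives
\[
0\ \ge\ \tilde p_t-d\tilde p_{xx}\ \ge\ (c_{11}-K)\tilde p+ar\underline u\,\tilde q.
\]
Choosing $K>M$ makes $(c_{11}-K)\tilde p>0$ at a negative minimum of $\tilde p$, and $ar\underline u\,\tilde q\ge ar\underline u\,\tilde p\ge 0$ (since $\tilde q\ge \tilde p$ at a joint minimum), yielding a contradiction. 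A symmetric argument handles the case where the minimum is attained by $\tilde q$. Hence $p,q\ge 0$ on $\overline D$.

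To pass to unbounded spatial domains or $T=\infty$, I would exhaust $D$ by bounded subdomains $D_n=(\tau,T_n)\times(\zeta_1^n,\zeta_2^n)$ and compare on each $D_n$, using the boundedness of $\bar u,\underline u,\bar v,\underline v$ (which follows from the typical applications of this lemma in the paper, where all candidates take values in a compact invariant region) together with a gaussian weight $e^{-\beta x^2/(1+t)}$ of Phragmén--Lindelöf type to control behavior at $|x|\to\infty$; the weight absorbs the bounded zero-order coefficients and reduces the problem to the compact case handled above. The main potential obstacle is not the algebra but rather ensuring the cooperative sign of the off-diagonal coefficients and handling unboundedness; both are routine provided one works within the invariant rectangle $[0,1/a]\times[0,1/b]$-type region where $\underline u,\underline v\ge 0$, which is the regime in which the lemma will actually be invoked later in the paper.
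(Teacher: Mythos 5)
The paper does not actually prove this lemma: it cites \cite{PW1984} and simply remarks that the competitive system can be recast as a cooperative one. So there is no in-text proof to compare against, and your proposal is an attempt to supply the proof the citation would give. The approach you take — form the differences $p=\bar u-\underline u$, $q=\bar v-\underline v$, derive coupled linear parabolic inequalities with nonnegative off-diagonal coefficients, and run a maximum-principle argument with an $e^{-Kt}$ weight — is indeed the standard route, and your computation of the two linear inequalities (via the identity $\bar u\underline v-\underline u\bar v=\underline v p-\underline u q$) is correct. That said, two points need attention.

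The nonnegativity of the off-diagonal coefficients $ar\underline u$ and $b\underline v$ is essential to the whole argument and is treated too casually. The lemma as stated does not hypothesize $\underline u\ge 0$ or $\underline v\ge 0$, and there is no way to deduce it from the initial/boundary comparison in \eqref{supersol} alone, since subsolutions and supersolutions do not satisfy the equation and hence do not enjoy the usual invariance of the nonnegative cone. The standard comparison principle for quasi-monotone systems (as in \cite{PW1984}) is stated with an explicit range restriction (the nonlinearity must be quasi-monotone on the order interval containing the super/subsolutions), and the analogous restriction should be built into the hypotheses here — or you should exploit the alternative decomposition $\bar u\underline v-\underline u\bar v=\bar v p-\bar u q$, which replaces the off-diagonal coefficient by $ar\bar u$ (similarly $b\bar v$) and may be easier to keep nonnegative in a given application. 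You gesture at this ("can be assumed after\dots invariance of the nonnegative cone, or simply by localizing") but none of those options actually closes the gap for the general statement. Since every concrete supersolution and subsolution constructed in this paper is nonnegative, the issue does not bite downstream, but a reader relying on the lemma as a black box would be misled.

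There is also a small sign error in the maximum-principle step. At a negative interior minimum of $\tilde p$ you write
\[
0\ge (c_{11}-K)\tilde p + ar\underline u\,\tilde q \ge (c_{11}-K)\tilde p + ar\underline u\,\tilde p \ge 0,
\]
but the last inequality is false: with $ar\underline u\ge 0$ and $\tilde p<0$, one has $ar\underline u\,\tilde p\le 0$, not $\ge 0$. The argument is easily repaired by absorbing the extra term into the choice of $K$: from $(c_{11}-K)\tilde p + ar\underline u\,\tilde p = (c_{11}+ar\underline u-K)\tilde p$, take $K$ large enough that $c_{11}+ar\underline u-K<0$ uniformly on $\overline D$, and then $(c_{11}+ar\underline u-K)\tilde p>0$ gives the contradiction. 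The Phragm\'en--Lindel\"of extension to unbounded domains is sketched at the right level of detail and poses no problem given that the super/subsolutions in the paper are uniformly bounded.
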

When $\zeta_1=-\infty$ or $\zeta_2=\infty$, the corresponding boundary condition (the {second} condition) in \eqref{supersol} is omitted.

\begin{remark}
The definition of super and subsolutions can be weakened slightly. For example,
when both $(\underline{u}_1,\overline{v})$ and $(\underline{u}_2,\overline{v})$ are subsolution in $D$, then
$(\max\{\underline{u}_1,\underline{u}_2\},\overline{v})$ can be referred to as a subsolution in $D$ such that
the comparison principle remains true. We refer to \cite{Girardin-Lam2018} for more discussion.
\end{remark}

\medskip

\subsection{Some crucial estimates}

In this subsection, we present several lemmas to provide crucial estimates of the solution $(u,v)$ to problem \eqref{LV-sys}-\eqref{LV-ic},
which play an important role in our analysis.
{Almost all results hold for both {\bf(A1)} and {\bf(A2)}.
Otherwise, we will emphasize in the statement of the result. }

\begin{lemma}\label{lem:simple est}
There exist $M>0$ %and $T\gg1$
such that
\bea
&&u(t,x)\leq 1+Me^{-r t}, \ \ \ \forall t\geq {0},\ x\in\mathbb{R},\label{M1-bound}\\
&&v(t,x)\leq 1+Me^{- t},\ \ \ \ \forall t\geq {0},\ x\in\mathbb{R}.\label{M2-bound}
\eea
\end{lemma}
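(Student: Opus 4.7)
The plan is to decouple each equation into a scalar logistic inequality and then compare with the explicit solution of the corresponding ODE.

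First, I would observe that since $u_0,v_0\ge 0$ and both are bounded (under either scenario \textbf{(A1)} or \textbf{(A2)}), the comparison principle (Lemma \ref{CP}) applied to the cooperative reformulation, together with the fact that $(0,0)$ and $(\|u_0\|_\infty+\|v_0\|_\infty,\|u_0\|_\infty+\|v_0\|_\infty)$ bound $(u_0,v_0)$ from below and above, guarantees $u(t,x),v(t,x)\ge 0$ for all $t\ge 0$, $x\in\mathbb{R}$. Consequently, using $a,b,u,v\ge 0$,
\begin{equation*}
u_t=du_{xx}+ru(1-u-av)\le du_{xx}+ru(1-u),\qquad v_t=v_{xx}+v(1-v-bu)\le v_{xx}+v(1-v).
\end{equation*}

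Next, I would compare $u$ to the spatially constant supersolution $U(t)$ of the first inequality defined by the Fisher-KPP ODE $U'=rU(1-U)$ with initial value $U_0:=\max\{\|u_0\|_\infty,1\}\ge 1$. This ODE can be solved explicitly:
\begin{equation*}
U(t)=\frac{U_0}{U_0-(U_0-1)e^{-rt}},\qquad U(t)-1=\frac{(U_0-1)e^{-rt}}{U_0-(U_0-1)e^{-rt}}.
\end{equation*}
Since $U_0\ge 1$, the denominator satisfies $U_0-(U_0-1)e^{-rt}\ge 1$ for all $t\ge 0$, and therefore
\begin{equation*}
U(t)\le 1+(U_0-1)e^{-rt}\qquad\text{for all }t\ge 0.
\end{equation*}
By the scalar parabolic comparison principle, $u(t,x)\le U(t)$ for all $t\ge 0$ and $x\in\mathbb{R}$, which yields \eqref{M1-bound} with $M\ge \max\{\|u_0\|_\infty,1\}-1$. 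Repeating the identical argument with $r$ and $d$ replaced by $1$ and $1$, and $u_0$ replaced by $v_0$ (whose $L^\infty$-norm is finite in both scenarios), gives \eqref{M2-bound} with $M\ge \max\{\|v_0\|_\infty,1\}-1$.

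Finally, choosing $M$ to be the maximum of the two quantities above delivers both inequalities simultaneously. There is no genuine obstacle here; the only point requiring mild care is verifying the uniform denominator bound in the explicit ODE solution, which is what forces us to start the comparison from $U_0\ge 1$ rather than from $\|u_0\|_\infty$ when $\|u_0\|_\infty<1$.
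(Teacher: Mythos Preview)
Your proof is correct and follows essentially the same approach as the paper: drop the competition term using $v\ge 0$ (resp.\ $u\ge 0$), then compare with the explicit solution of the logistic ODE and read off the exponential bound. Your write-up is in fact slightly more detailed than the paper's, which simply asserts the existence of $M$ without spelling out the denominator estimate.
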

\begin{proof}
Consider the ODE problem
 $$
 w'(t)=rw(1-w),\ \ w(0)=\|u_0\|_{L^{\infty}}:=w_0.
 $$
By an elementary calculation, we have
\beaa
w(t)=\frac{w_0}{w_0+(1-w_0)e^{-r t}},\quad t\geq0.
\eeaa
Clearly, there exists
positive constant %$T$ and
$M$ such that
$w(t)\leq 1+Me^{-r t}$ for $t\geq {0}$. Then \eqref{M1-bound} follows by comparing $u(t,x)$ and $w(t)$.  Similarly,
\eqref{M2-bound} holds true.
\end{proof}

\begin{lemma}\label{lem:exp-decay}
If $c>c_u:=2\sqrt{rd}$, then %for any small $\varepsilon>0$,
there exist $M,\mu>0$ and $T\gg1$ such that
\beaa
u(t,x)\leq M e^{-\mu [c-{2\sqrt{rd}}\,]t},\ \ \  \forall t\geq T,\ x>ct.
\eeaa
\end{lemma}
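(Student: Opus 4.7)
The plan is a short exponential-supersolution argument applied to the scalar parabolic inequality satisfied by $u$ alone. Since $u,v\geq 0$ by the maximum principle, the competition term is nonpositive and the first equation of \eqref{LV-sys} yields
\[
u_t \;\leq\; d\, u_{xx} + r\, u \qquad \text{in } (0,\infty)\times\mathbb{R}.
\]
Thus $u$ is a subsolution of the linear heat equation $w_t = d w_{xx} + r w$, and the $v$-equation can be discarded from here on: only the \emph{scalar} comparison principle will be needed.

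Given $c>2\sqrt{rd}$, I would fix an intermediate speed $c' := \tfrac{1}{2}(c + 2\sqrt{rd}) \in (2\sqrt{rd},c)$ and seek an exponential supersolution of the linearized equation of the form $\bar w(t,x) = M e^{-\lambda(x-c't)}$ with $\lambda, M>0$. Direct substitution reduces the supersolution inequality $\bar w_t \geq d\bar w_{xx} + r\bar w$ to $d\lambda^2 - c'\lambda + r \leq 0$; the discriminant $(c')^2 - 4rd$ is strictly positive because $c'>c_u$, so this quadratic has two distinct positive roots and any $\lambda$ strictly between them is admissible (for concreteness $\lambda := c'/(2d)$, the vertex of the parabola in $\lambda$). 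To dominate the initial data, I would use the fact that in both scenarios {\bf(A1)} and {\bf(A2)} the function $u_0$ is continuous with compact support, say $\mathrm{supp}\,u_0\subset[-R,R]$ and $u_0\leq K$: choosing $M\geq K e^{\lambda R}$ then gives $\bar w(0,x) = M e^{-\lambda x} \geq u_0(x)$ for every $x\in\mathbb{R}$ (trivial for $x<-R$, where $u_0=0$). The scalar comparison principle for $w_t = d w_{xx} + r w$ now delivers
\[
u(t,x) \;\leq\; M e^{-\lambda(x-c't)} \qquad \text{for all } t\geq 0,\ x\in\mathbb{R}.
\]

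Finally, for $x>ct$ we have $x - c't > (c-c')\,t = \tfrac{1}{2}(c - 2\sqrt{rd})\,t$, so
\[
u(t,x) \;\leq\; M \exp\!\bigl(-\tfrac{\lambda}{2}(c-2\sqrt{rd})\,t\bigr),
\]
which is the claim with $\mu := \lambda/2$ and any $T\geq 0$ (in particular any $T\gg 1$). I do not anticipate a real obstacle here: the argument is self-contained, uses only scalar comparison together with the compact support of $u_0$ (available in both (A1) and (A2)), and invokes no structural feature of the competition system beyond $v\geq 0$. The only mild choice is the admissible $(c',\lambda)$-pair, which is elementary once $c>2\sqrt{rd}$.
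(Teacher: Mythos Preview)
Your argument is correct and complete. It differs from the paper's route, though: the paper does not linearize but instead uses a scaled KPP traveling wave as supersolution, taking $\overline{u}(t,x)=M\,U_{KPP}(x-c_u t)$ with $\underline{v}\equiv 0$, checking $N_1[\overline{u},\underline{v}]=rM(M-1)U_{KPP}^2\geq 0$, and then reading off the decay from the known tail asymptotic $U_{KPP}(\xi)\sim C\xi e^{-(c_u/2d)\xi}$. Your approach is more elementary---it avoids invoking the KPP profile and its asymptotics altogether, needing only a one-line scalar comparison with an explicit exponential---at the cost of a slightly cruder decay constant $\mu$ (tied to the intermediate speed $c'$ rather than to the critical exponent $c_u/(2d)$). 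Since the lemma only asserts the existence of \emph{some} $\mu>0$, this loss is irrelevant, and your proof is a perfectly good (arguably cleaner) alternative.
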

\begin{proof}
Let {$U_{KPP}$} be the solution of
\beaa
\begin{cases}
{c_{u}} U'+dU''+r(1-U)U=0,\quad \xi\in\mathbb{R},\\
{U(-\infty)=1},\quad U(+\infty)=0,\quad U(0)=1/2,
\end{cases}
\eeaa
where ${c_{u}=2\sqrt{rd}}$.
Recall from \cite{KPP} that there exists $C>0$ such that
\bea\label{KPP-AS}
{U_{KPP}(\xi)}\sim C\xi e^{-[{c_{u}}/(2d)]\xi},\ \quad \mbox{as $\xi\to\infty$}.
\eea

Define
\beaa
\overline{u}(t,x):={M U_{KPP}(x-c_u t)}, \quad \underline{v}(t,x)=0
\eeaa
{for some large $M>1$ such that $MU_{KPP}(x)\geq u_0(x)$ for all $x\in\mathbb{R}$.}
It is easy to check that
$$N_1[\overline{u},\underline{v}](t,x)={rM(M-1)U^2_{KPP}}\geq0,\ \ N_2[\overline{u},\underline{v}](t,x)=0\ \
\mbox{in}\ [T,\infty)\times \mathbb{R}.$$ Thus, by comparison, we have
$\overline{u}(t,x)\geq u(t,x)$  in $[T,\infty)\times \mathbb{R}$, and in turn, for all {$t\geq 0$} and $x>c t$,
\beaa
u(t,x)\leq { MU_{KPP}((c-c_u)t)},
\eeaa
which together with \eqref{KPP-AS}, completes the proof.
%The proof of (ii) proceeds in exactly the same manner as we did for (i). Therefore, the proof is completed.
\end{proof}

Next, we establish an exponential decay rate of $v$.
%when the spreading of $u$ occurs, which is important in order to construct a suitable %subsolution.

\begin{lemma}\label{lem:v-rate}
For any given $c\in(0,c_{uv})$, %and small $\epsilon>0$ such that
%\beaa
%\gamma_\epsilon:=b(1-\epsilon)-1>0,
%\eeaa
there exist positive constants {$\rho$}, $T$ and $M$ such that
\beaa
 v(t,x)\leq M e^{-{\rho} t},\quad
 \forall t\geq T,\ x\in[-ct, ct].
\eeaa
\end{lemma}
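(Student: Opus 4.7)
The plan is to exploit {\bf(H3)} to obtain an initial configuration close to $(1,0)$ on a large compact set at some time $T_0$, and then use the bistable traveling wave $(U,V)$ with speed $c_{uv}$ to deploy two outward-moving fronts as a barrier. The exponential decay of the upper bound on $v$ will ultimately come from the tail behavior $V(\xi)\sim\ell_4 e^{\lambda_4\xi}$ as $\xi\to-\infty$ given in Lemma~\ref{lem:AS-}: on the strip $|x|\leq ct$ with $c<c_{uv}$, the shifted wave argument tends to $-\infty$ at rate $(c_{uv}-c)t$, yielding a bound of order $e^{-\lambda_4(c_{uv}-c)t}$.

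Concretely, I would fix a small $\delta>0$, choose a separation parameter $h_0>0$ (of order $|\ln\delta|$), and then use {\bf(H3)} to choose $T_0\gg 1$ and $L\gg h_0$ such that $u(T_0,x)\geq 1-\delta$ and $v(T_0,x)\leq\delta$ on $[-L,L]$. For $t\geq T_0$ let
\[
\xi_R(t,x)=x-c_{uv}(t-T_0)-h_0,\qquad \xi_L(t,x)=-x-c_{uv}(t-T_0)-h_0,
\]
and consider the candidate pair, in the style of the two-front constructions of Fife--McLeod and Morita--Tachibana \cite{MT2009},
\[
\underline u(t,x)=\max\{0,\,U(\xi_R)+U(\xi_L)-1\},\qquad \bar v(t,x)=V(\xi_R)+V(\xi_L),
\]
possibly perturbed by a small time-decaying factor $Ke^{-\sigma(t-T_0)}$ to absorb both the $O(e^{-T_0})$ overshoot of $v$ above $1$ granted by Lemma~\ref{lem:simple est} and the residual generated by the interaction of the two fronts. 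By construction, at $t=T_0$ the pair $(\underline u,\bar v)$ is close to $(1,0)$ inside the initial plateau and close to $(0,1)$ far outside it.

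The core task is to verify that $(\underline u,\bar v)$ is a subsolution of \eqref{LV-sys}; using \eqref{TW sys} for each single front, the residuals $N_1[\underline u,\bar v]$ and $N_2[\underline u,\bar v]$ reduce to cross-interaction terms involving products like $U(\xi_R)V(\xi_L)$ and $U(\xi_L)V(\xi_R)$, which are exponentially small in $h_0$ by Lemma~\ref{lem:AS-}. The initial and far-field matching $\underline u(T_0,\cdot)\leq u(T_0,\cdot)$ and $\bar v(T_0,\cdot)\geq v(T_0,\cdot)$ on $\mathbb{R}$ is then built in by the choice of $\delta,h_0$ together with Lemma~\ref{lem:simple est}. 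Applying Lemma~\ref{CP} yields $v(t,x)\leq\bar v(t,x)$ for $t\geq T_0$. For $x\in[-ct,ct]$ both $\xi_R$ and $\xi_L$ are bounded above by $(c-c_{uv})(t-T_0)-h_0$, so Lemma~\ref{lem:AS-} gives
\[
\bar v(t,x)\leq 2\ell_4 e^{\lambda_4[(c-c_{uv})(t-T_0)-h_0]}\leq Me^{-\rho t},\qquad\rho=\lambda_4(c_{uv}-c)>0,
\]
as claimed. The main obstacle is precisely the subsolution verification together with the far-field matching for $|x|>L$: this is the standard delicate part of a two-front bistable construction and hinges on choosing $h_0$ large enough that the tails of $U$ and $V$ dominate the cross-interaction and slack terms, while still small enough that the matching on $[-L,L]$ supplied by {\bf(H3)} is preserved.
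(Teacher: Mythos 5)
Your two-front strategy is not the route the paper takes for this lemma. The paper first proves the locally uniform convergence \eqref{segregation-uv} (via the perturbed-wave subsolution of the Appendix) and then bootstraps it into an exponential rate by comparing $v$ with a Dirichlet heat-type problem on $[-L,L]$ and estimating the Green function. Your route -- building a subsolution from $U(\xi_R)+U(\xi_L)-1$ and $V(\xi_R)+V(\xi_L)$ directly and reading the rate off Lemma~\ref{lem:AS-} -- is in fact the \emph{alternative} proof the authors sketch in the remark after Lemma~\ref{lem:u-rate}, realized in their Lemma~\ref{lem:subsol-2} and Lemma~\ref{lem:order-4}. So your plan is coherent and consistent with the paper, but it rests on the construction of Section 3.3.2, not on anything in Section~2.

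There is, however, a concrete gap in what you wrote, and it is exactly at the point you flag as ``the main obstacle.'' You assert that, after using \eqref{TW sys}, the residuals of the bare two-front pair ``reduce to cross-interaction terms involving products like $U(\xi_R)V(\xi_L)$'' which are ``exponentially small in $h_0$.'' A direct computation gives instead (with $\epsilon_R=1-U(\xi_R)$, $\epsilon_L=1-U(\xi_L)$, $V_R=V(\xi_R)$, $V_L=V(\xi_L)$)
\begin{align*}
N_1[\underline u,\bar v]&=r\bigl[\,2\,\epsilon_R\epsilon_L-a\,\epsilon_R V_L-a\,\epsilon_L V_R\,\bigr],\\
N_2[\underline u,\bar v]&=2\,V_R V_L-b\,\epsilon_L V_R-b\,\epsilon_R V_L.
\end{align*}
These are indeed exponentially small, but what is needed is $N_1\le0$ and $N_2\ge0$, and the sign is not under control: the term $2\epsilon_R\epsilon_L$ in $N_1$ has the wrong sign, and whether it is dominated by $a(\epsilon_R V_L+\epsilon_L V_R)$ depends on the relative left-tail exponents $\lambda_3$ and $\lambda_4$ of Lemma~\ref{lem:AS-}. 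If $\lambda_3<\lambda_4$ then $1-U(\xi)\gg V(\xi)$ as $\xi\to-\infty$, and deep inside the plateau (where $\xi_R,\xi_L\to-\infty$) the positive $\epsilon_R\epsilon_L$ term dominates, so $N_1>0$ and the bare pair is \emph{not} a subsolution; in the same regime the $-b\epsilon V$ terms dominate $2V_RV_L$, so $N_2<0$ also fails. Taking $h_0$ large makes everything smaller but changes no signs, so the obstruction is not one of size. This is precisely why the paper's Lemma~\ref{lem:subsol-2} does not use the bare sum but adds Fife--McLeod-type corrections: amplitude perturbations $\hat p(t),\hat q(t)\sim e^{-\beta t}$ with the sign constraint $\hat q_0>2b(1+\hat q_0)\hat p_0$, and -- crucially -- a phase drift $\zeta(t)=\zeta_0-\zeta_1 e^{-(\beta/2)t}$ whose derivative is of order $e^{-\beta t/2}$, so that $\zeta'(t)(U'_{+}+U'_{-})$ and $\zeta'(t)(V'_{+}+V'_{-})$ dominate the $O(e^{-\beta t})$ wrong-signed residuals in the transition layers. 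Your ``small time-decaying factor $Ke^{-\sigma(t-T_0)}$'' alone does not reproduce this mechanism. Once that machinery is in place, the rest of your argument -- comparison via {\bf(H3)}, then reading $\rho=\lambda_4(c_{uv}-c)$ off Lemma~\ref{lem:AS-} on $|x|\le ct$ -- goes through, and indeed matches the paper's remark.
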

\begin{proof}
{Inspired by the proof of}  (9) in \cite{Carrere}, {one can show}
\bea\label{segregation-uv}
\lim_{t\to\infty}\left[\max_{x\in[-ct,ct]}|u(t,x)-1|+\max_{x\in[-ct,ct]}v(t,x)\right]=0.
\eea
{
Indeed, the proof of \eqref{segregation-uv} is based on a suitable construction of a subsolution by perturbing the system \eqref{TW sys}.
We have to note that the subsolution constructed in \cite{Carrere} cannot apply to our case directly because initial data
in \cite{Carrere} are different from {\bf(A1)} or {\bf(A2)}.
After a suitable modification of the subsolution used in \cite{Carrere}, one can verify \eqref{segregation-uv}.
The proof is rather long and tedious, and we put it in the Appendix.
}

By \eqref{segregation-uv},
for any given small $\epsilon>0$, there exists $T\gg1$ such that
\bea
&&0\leq v(t,x)\leq\epsilon\quad\mbox{for all $t\geq T$ and $x\in[-ct,ct]$},\label{v-ep}\\
&&u(t,x)\geq 1-\epsilon\quad\mbox{for all $t\geq T$ and $x\in[-ct,ct]$}\label{u-ep}.
\eea
By %the definition of $\gamma_\epsilon$ and
\eqref{u-ep}, we see from $v$ equation in \eqref{LV-sys} that
\bea\label{v-ineq-gamma}
v_t\leq v_{xx}-\gamma_{\epsilon} v\quad\mbox{for all $t\geq 0$ and $x\in[-ct,ct]$},
\eea
{where $\gamma_\epsilon:=b(1-\epsilon)-1>0$ (if necessary, we choose $\epsilon$ smaller to ensure $\gamma_\epsilon>0$).}

Given $L>0$, consider the following fixed boundary problem
\beaa
\begin{cases}
\psi_t=\psi_{xx}-\gamma_{\epsilon} \psi, \quad t>0,\ -L<x<L,\\
\psi(t,\pm L)=\epsilon,\quad t>0,\\
\psi(0,x)=\epsilon,\quad -L<x<L.
\end{cases}
\eeaa
Note that the above problem admits the unique positive steady state
\beaa
\chi(x):=
\left[\frac{e^{\sqrt{\gamma_{\epsilon}}x}+e^{-\sqrt{\gamma_{\epsilon}}x}}{e^{\sqrt{\gamma_{\epsilon}}L}+e^{-\sqrt{\gamma_{\epsilon}}L}}\right]\epsilon,\quad -L\leq x\leq L.
\eeaa

Denote $$\Psi(t,x)=\psi(t,x)-\chi(x).$$ After some simple calculations, $\Psi$ solves
\beaa
\begin{cases}
\Psi_t=\Psi_{xx}-\gamma_{\epsilon}\Psi, \quad t>0,\ -L<x<L,\\
\Psi(t,\pm L)=0,\quad t>0,\\
\Psi(0,x)=\epsilon-\chi(x)\leq \epsilon,\quad -L<x<L.
\end{cases}
\eeaa
By a simple comparison (with an obvious ODE problem), we have
\beaa
0\leq \Psi(t,x)\leq \epsilon e^{-\gamma_{\epsilon}t}\quad\mbox{for $t>0$ and $-L\leq x\leq L$},
\eeaa
which gives
\beaa
\psi(t,x)\leq \epsilon\Big(e^{-\gamma_{\epsilon} t}+\frac{e^{\sqrt{\gamma_{\epsilon}}x}+e^{-\sqrt{\gamma_{\epsilon}}x}}{e^{\sqrt{\gamma_{\epsilon}}L}+e^{-\sqrt{\gamma_{\epsilon}}L}}\Big)\quad\mbox{for $t>0$ and $-L\leq x\leq L$}.
\eeaa
{Therefore, \beaa
\lim_{t\to\infty} \psi(t,x)=\chi(x)\leq\epsilon \quad \mbox{uniformly for $x\in[-L,L]$}.
\eeaa
}
In particular, taking any $\sigma\in(0, 1/\sqrt{\gamma_{\epsilon}})$, we deduce
\beaa
\psi(t,x)
\leq \epsilon\Big(e^{-\gamma_{\epsilon}t}+\frac{2e^{\sqrt{\gamma_{\epsilon}}|x|}}{e^{\sqrt{\gamma_{\epsilon}}L}}\Big)
\leq \epsilon(e^{-\gamma_{\epsilon}t}+2e^{-\gamma_{\epsilon}\sigma L})
\eeaa
 for all $t>0$ and $x\in[-(1-\sqrt{\gamma_{\epsilon}}\sigma)L, (1-\sqrt{\gamma_{\epsilon}}\sigma)L]$.
{Therefore, one has
\beaa
\psi(t,x)\leq 3\epsilon e^{-\gamma_{\epsilon}\sigma L}\quad\mbox{for $t\geq \sigma L$ and $|x|\leq(1-\sqrt{\gamma_{\epsilon}}\sigma)L$},
\eeaa
Taking $L=c\widehat{T}$ for $\widehat{T}\geq T$,
by \eqref{v-ep} and \eqref{v-ineq-gamma}, one can apply the comparison principle to assert that
\beaa
v(t+\widehat{T},x)\leq \psi(t,x)\leq 3\epsilon e^{-\gamma_{\epsilon}\sigma c\widehat{T}},
\eeaa
for $t\geq \sigma c\widehat{T}$ and $|x|\leq(1-\sqrt{\gamma_{\epsilon}}\sigma)c\widehat{T}$.
In particular, taking $t=\sigma c\widehat{T}$, we have
\beaa
v(\sigma c\widehat{T} +\widehat{T},x)\leq 3\epsilon e^{-\gamma_{\epsilon}\sigma c\widehat{T}}\quad\mbox{for $|x|\leq(1-\sqrt{\gamma_{\epsilon}}\sigma)c\widehat{T}$}.
\eeaa

Note that $t=\sigma c\widehat{T} +\widehat{T}$ if and only if $\widehat{T}=t(\sigma c+1)^{-1}$. It follows that
\beaa
v(t,x)\leq 3\epsilon e^{-\gamma_{\epsilon}\sigma c(\sigma c+1)^{-1}t}\quad
\mbox{for $t\geq T^*$ and $|x|\leq(1-\sqrt{\gamma_{\epsilon}}\sigma)c(\sigma c+1)^{-1}t$},
\eeaa
where
$T^*=\sigma c{T} +{T}$. Since $c$ can be arbitrarily close to $c_{uv}$ and $\sigma>0$ can be arbitrarily small,
we see that the proof is complete.
}
%Let $L=t/\alpha$ with
%\bea\label{alpha}
%0<\alpha<\frac{1-\sqrt{\gamma_{\epsilon}}\sigma}{c}.
%\eea
%Then
%\beaa
%\psi(t,x)\leq \epsilon(e^{-\gamma_{\epsilon} t}+2e^{-\gamma_{\epsilon}\sigma t/\alpha})
%\eeaa
%for all $t>0$ and $x\in[-(1-\sqrt{\gamma_{\epsilon}}\sigma)t/\alpha, (1-\sqrt{\gamma_{\epsilon}}\sigma)t/\alpha]$.
%Because of \eqref{alpha}, we see that
%\beaa
%[-ct,ct]\subset \Big[-(1-\sqrt{\gamma_{\epsilon}}\sigma)t/\alpha, (1-\sqrt{\gamma_{\epsilon}}\sigma)t/\alpha\Big],
%\eeaa
%which together with \eqref{v-ep} and \eqref{v-ineq-gamma} enables one to apply comparison principle to assert that
%\beaa
%v(t+T,x)\leq \psi(t,x)\leq \epsilon(e^{-\gamma_{\epsilon} t}+2e^{-\gamma_{\epsilon}\sigma t/\alpha})\leq
%\epsilon\Big\{e^{-\gamma_{\epsilon} t}+2e^{-c(\frac{\gamma_{\epsilon}\sigma}{1-\sqrt{\gamma_{\epsilon}}\sigma}) t}\Big\}
%\eeaa
%for $t>0$ and $x\in[-ct,ct]$. Taking $\sigma$ close to $1/\sqrt{\gamma_{\epsilon}}$, we thus complete the proof.
\end{proof}

When {{\bf(A2)} holds and} $c_u>c_v$, as in {proving} (12) of \cite[p.2137]{Carrere}, one has
\bea\label{segregation-uv2}
\lim_{t\to\infty}\left[\max_{x\in[-ct,ct]}|u(t,x)-1|+\max_{x\in[-ct,ct]}v(t,x)\right]=0
\eea
for any $c\in(0,c_u)$. {
To see this,
we fix any $\hat{c}\in(c_v,c_u)$. As $\hat{c}>c_v$, by using a similar proof of Lemma~\ref{lem:exp-decay} (note that,
under {\bf(A2)}, $v_0$ has compact support), one has
$v(t,x)\leq M e^{-\nu(\hat{c})t}$ for some $\nu(\hat{c})>0$ for all $|x|\geq \hat{c}t$ and all large $t$. An argument used in
{\cite[Section 3]{Carrere}}
implies that $u(t,x)\to1$ as $t\to\infty$ uniformly for $\hat{c} t\leq |x|\leq \tilde{c} t$, where $\tilde{c}$ is any speed such that $\hat{c} <\tilde{c} <c_u$.
In particular, $u(t,-\hat{c}t)\to 1$ as $t\to\infty$.
This allows us to adopt the construction of a
subsolution $(\underline{u},\overline{v})$ used in \cite[Section 4.1]{Carrere} (with minor modifications) and compare the solution over $[T,\infty)\times [-\hat{c}t,\infty)$ for some $T\gg1$,
Consequently, we can derive
\bea\label{c-hat-speed}
\lim_{t\to\infty}\left[\max_{x\in[-\hat{c}t,0]}|u(t,x)-1|+\max_{x\in[-\hat{c}t,0]}v(t,x)\right]=0.
\eea
By symmetry, \eqref{c-hat-speed} still holds with $x\in[-\hat{c}t,0]$ replaced by $x\in[0,\hat{c}t]$.
Therefore, \eqref{segregation-uv2} holds.
}
Then, replacing \eqref{segregation-uv} by \eqref{segregation-uv2} and {following the lines} of the proof of Lemma~\ref{lem:v-rate},
one can obtain

\begin{cor}\label{cor:v-rate}
Assume that {{\bf(A2)} holds and} $c_u>c_v$.
For any given  $c\in(0,c_{u})$ and small $\epsilon>0$,
there exist
positive constants {$\rho'$}, $T$ and $M$ such that
\beaa
 v(t,x)\leq M e^{-{\rho'} t},\quad
 \forall t\geq T,\ x\in[-ct, ct].
\eeaa
%where $\gamma_{\epsilon}$ is defined in Lemma~\ref{lem:v-rate}.
\end{cor}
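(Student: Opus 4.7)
The plan is to mirror the proof of Lemma~\ref{lem:v-rate} line by line, substituting the stronger segregation \eqref{segregation-uv2} (which holds on $[-ct,ct]$ for every $c<c_u$ under the present hypothesis $c_u>c_v$) for \eqref{segregation-uv}. Because every other ingredient of that proof is speed-agnostic, this substitution automatically upgrades the admissible range of speeds from $(0,c_{uv})$ to $(0,c_u)$, while no other step needs to change.

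Concretely, fix $c\in(0,c_u)$ and shrink $\epsilon>0$ so that $\gamma_\epsilon:=b(1-\epsilon)-1>0$. Invoking \eqref{segregation-uv2} produces $T_0\gg 1$ with $u(t,x)\ge 1-\epsilon$ and $0\le v(t,x)\le \epsilon$ on $\{t\ge T_0,\ |x|\le ct\}$; feeding this into the $v$-equation of \eqref{LV-sys} yields the linear inequality $v_t\le v_{xx}-\gamma_\epsilon v$ on the same region. I then copy the barrier construction of Lemma~\ref{lem:v-rate}: for each $L>0$, the auxiliary problem $\psi_t=\psi_{xx}-\gamma_\epsilon\psi$ on $(-L,L)$ with constant boundary and initial data $\epsilon$ admits the explicit $\cosh$-profile steady state $\chi_L$, and $\psi_L-\chi_L$ decays like $e^{-\gamma_\epsilon t}$ by comparison with the obvious ODE. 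Picking $\sigma\in(0,1/\sqrt{\gamma_\epsilon})$ yields $\psi_L(t,x)\le 3\epsilon e^{-\gamma_\epsilon \sigma L}$ for $t\ge \sigma L$ and $|x|\le (1-\sqrt{\gamma_\epsilon}\sigma)L$.

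The final step is the time-dependent choice $L=c\widehat T$ with $\widehat T\ge T_0$. The comparison principle (Lemma~\ref{CP}), whose boundary hypothesis on $x=\pm L$ is guaranteed precisely by the second half of \eqref{segregation-uv2}, gives $v(t+\widehat T,x)\le \psi_L(t,x)$ on the appropriate parabolic cylinder. Evaluating at $t=\sigma c\widehat T$ and inverting the affine relation between $\widehat T$ and the running time $t$ produces $v(t,x)\le 3\epsilon \exp(-\rho' t)$ with $\rho':=\gamma_\epsilon \sigma c/(\sigma c+1)$, valid on $|x|\le (1-\sqrt{\gamma_\epsilon}\sigma)c(\sigma c+1)^{-1}t$ for all large $t$. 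Since $c$ and $\sigma$ remain free, letting $c\uparrow c_u$ and $\sigma\downarrow 0$ covers every $c'<c_u$ demanded by the statement.

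The only place where genuine work is required is not this routine barrier argument but the verification of \eqref{segregation-uv2} itself, which is the real obstacle. As sketched in the paragraph preceding the corollary, it proceeds by a two-step bootstrap: Lemma~\ref{lem:exp-decay} applied to the compactly supported $v_0$ coming from \textbf{(A2)} gives $v(t,x)\le Me^{-\nu(\hat c)t}$ for $|x|\ge \hat c t$ with any $\hat c\in(c_v,c_u)$, which in particular forces $u(t,-\hat c t)\to 1$; a Carrere-type subsolution on $[-\hat c t,\infty)$ (essentially that of \cite[Section 4.1]{Carrere}, with minor boundary modifications to accommodate our initial data) then propagates the segregation $u\to 1$, $v\to 0$ inward up to the Fisher-KPP speed $c_u$. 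Once this input is in place, no further obstacle arises.
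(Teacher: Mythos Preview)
Your proposal is correct and follows essentially the same approach as the paper: the paper explicitly states that the corollary follows by ``replacing \eqref{segregation-uv} by \eqref{segregation-uv2} and following the lines of the proof of Lemma~\ref{lem:v-rate}'', and your sketch of how \eqref{segregation-uv2} is obtained (exponential decay of $v$ outside $|x|=\hat c t$ via Lemma~\ref{lem:exp-decay}, then a Carrere-type subsolution to propagate the segregation inward) matches the paper's paragraph preceding the corollary almost verbatim.
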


\begin{lemma}\label{lem:u-rate}
For any given $c\in(0,c_{uv})$, there exist positive constants $\delta$, $T$ and $M$ such that
\beaa
u(t,x)\geq 1-M e^{-\delta t},\quad \forall t\geq T,\ x\in[-ct,ct].
\eeaa
\end{lemma}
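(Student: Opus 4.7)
The plan is to mirror the proof of Lemma~\ref{lem:v-rate}, now working with the deficit $w := 1-u$ and treating the coupling to $v$ as a small exponentially decaying forcing term. Substituting $u = 1-w$ into the $u$-equation gives
\[
w_t - dw_{xx} + r(1-w)w = ra(1-w)v.
\]
Fix $c \in (0,c_{uv})$ and choose $c' \in (c, c_{uv})$. On the region where $w$ is already uniformly small (by the segregation result \eqref{segregation-uv}) and $v$ is already exponentially small (by Lemma~\ref{lem:v-rate} at speed $c'$), there exist $T>0$, $M_v>0$ and $\rho>0$ such that, for any prescribed small $\epsilon\in(0,1/2)$,
\[
w_t - dw_{xx} + \alpha w \leq raM_v\, e^{-\rho t}, \qquad \alpha := r(1-\epsilon) > 0,
\]
valid for $t \geq T$ and $|x| \leq c't$.

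Next I would construct an explicit supersolution of this linear inequality on a cylinder with fixed spatial boundary, in the spirit of Lemma~\ref{lem:v-rate}. For each $\hat T \geq T$ set $L := c'\hat T$ and try
\[
\bar\phi(t,x) := \epsilon\, e^{-\delta(t-\hat T)} + \epsilon \cdot \frac{\cosh(\sqrt{\alpha/d}\,x)}{\cosh(\sqrt{\alpha/d}\,L)}, \qquad \delta \in \bigl(0,\min\{\rho,\alpha\}\bigr).
\]
The spatial profile $\chi_L(x) := \cosh(\sqrt{\alpha/d}\,x)/\cosh(\sqrt{\alpha/d}\,L)$ satisfies $d\chi_L''=\alpha\chi_L$, so a direct computation reduces the parabolic operator applied to $\bar\phi$ to $(\alpha-\delta)\epsilon\, e^{-\delta(t-\hat T)}$, which dominates the forcing $raM_v e^{-\rho t}$ once $\hat T$ is taken sufficiently large. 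The boundary and initial inequalities $\bar\phi(t,\pm L) \geq w(t,\pm L)$ and $\bar\phi(\hat T,x)\geq w(\hat T,x)$ are immediate from $w \leq \epsilon$ on the relevant sets, the key point being that $L = c'\hat T \leq c't$ for $t\geq\hat T$ keeps the lateral boundary inside the good region. The comparison principle (Lemma~\ref{CP}) then yields $w \leq \bar\phi$ on $[\hat T,\infty)\times[-L,L]$.

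Finally, I would extract the moving-boundary estimate using the same rescaling trick as in Lemma~\ref{lem:v-rate}. On the shrunken interval $|x|\leq (1-\sigma)L$ one has $\chi_L(x) \leq 2e^{-\sqrt{\alpha/d}\,\sigma L}$, so
\[
w(t,x) \leq \epsilon\, e^{-\delta(t-\hat T)} + 2\epsilon\, e^{-\sqrt{\alpha/d}\,\sigma c'\hat T}, \qquad t\geq\hat T,\ |x|\leq (1-\sigma)c'\hat T.
\]
Setting $t = (1+\sigma c')\hat T$ makes both exponents proportional to $\hat T$, and then reparametrizing via $\hat T = t/(1+\sigma c')$ converts the estimate into a bound of the form $w(t,x) \leq Me^{-\delta' t}$ on the moving interval $|x|\leq (1-\sigma)c't/(1+\sigma c')$, with effective rate $\delta' = \min\{\delta,\sqrt{\alpha/d}\}\,\sigma c'/(1+\sigma c')$. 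Since $c'>c$, one can pick $\sigma>0$ small enough that this interval still contains $[-ct,ct]$, finishing the proof.

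The main obstacle is essentially the same technical tension as in Lemma~\ref{lem:v-rate}: choosing $\sigma$ small enough that the shrunken moving interval covers $[-ct,ct]$, while keeping $\delta'$ strictly positive. The new wrinkle is the forcing term $raM_v e^{-\rho t}$ on the right-hand side, but this is harmlessly absorbed into the supersolution by insisting $\delta \leq \rho$, since $\rho$ from Lemma~\ref{lem:v-rate} is independent of the parameters we are free to tune.
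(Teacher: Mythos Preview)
Your approach is correct and in fact more streamlined than the paper's own proof. Both arguments share the same skeleton: invoke Lemma~\ref{lem:v-rate} to make the coupling term exponentially small, freeze the problem on a fixed spatial cylinder $[-L,L]$ with $L=c'\hat T$, extract exponential decay there, and then slide $\hat T$ to convert the fixed-domain estimate into one on the moving interval. The difference lies in how the cylinder problem is handled. The paper works with $u$ directly, sets up the nonlinear subsolution problem $\phi_t=d\phi_{xx}+r[\eta(1-\phi)-aM_1e^{-\delta_1(t+\hat T)}\phi]$ on $[-c\hat T,c\hat T]$, represents its solution through the Dirichlet Green function, and invokes the Du--Lou integral estimate to control that representation. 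You instead pass to $w=1-u$, linearize (legitimate because segregation already forces $|w|\leq\epsilon$), and write down the explicit barrier $\bar\phi=\epsilon e^{-\delta(t-\hat T)}+\epsilon\chi_L$ built from the steady profile $\chi_L$, exactly mirroring Lemma~\ref{lem:v-rate}. Your route dispenses with the Green-function machinery and makes the parallel with Lemma~\ref{lem:v-rate} completely transparent; the paper's route is more hands-on with the nonlinear structure but pays for it in technical overhead.

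One small point to tighten: the inequality $w_t-dw_{xx}+\alpha w\leq raM_ve^{-\rho t}$ as you derived it uses $0\leq w\leq\epsilon$, whereas a priori $w$ can be slightly negative (Lemma~\ref{lem:simple est} only gives $u\leq 1+Me^{-rt}$, hence $w\geq -Me^{-rt}$). This is harmless for either of two reasons. First, at any point where the comparison $w\leq\bar\phi$ could fail one has $w>\bar\phi>0$, so the differential inequality holds precisely where the maximum-principle argument needs it. Alternatively, for $w<0$ one has $-r(1-w)w+\alpha w=rw(w-\epsilon)\leq r(1+\epsilon)Me^{-rt}$, so the inequality holds globally with right-hand side $C''e^{-\rho'' t}$, $\rho''=\min\{\rho,r\}$; then take $\delta<\rho''$ and proceed unchanged.
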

\begin{proof}
Thanks to Lemma~\ref{lem:v-rate},
there exist positive constants $T_1$, $M_1>0$ and $\delta_1>0$ such that
\beaa\label{v-est}
v(t,x)\leq M_1e^{-\delta_1 t},\quad \forall t\geq T_1,\ x\in[-c t, ct].
\eeaa
By \eqref{segregation-uv}, one can take $\eta>0$ close to $1$ and $\hat{T}\geq T_1$
such that
\beaa\label{u-eta}
u(t,x)\geq \eta,\quad \forall t\geq \hat{T},\ x\in[-c t, c t],
\eeaa
which also yields that
 $u(1-u)\geq \eta(1-u)$ for all $u\in[\eta,1]$.

To construct a subsolution of $u$-equation, we  consider
\bea\label{phi-subsys}
\begin{cases}
\phi_t=d\phi_{xx}+r[\eta(1-\phi)-aM_1 e^{-\delta_1(t+\hat{T})}\phi], \quad t>0,\ -c\hat{T}<x<c\hat{T},\\
\phi(t,\pm c\hat{T})=\eta,\quad t\geq 0,\\
\phi(0,x)=\eta,\quad -c\hat{T}\leq x\leq c\hat{T}.
\end{cases}
\eea
{It is obvious that $\phi_+\equiv1$ is a supersolution of \eqref{phi-subsys}.
By taking $\hat{T}$ large enough, $\phi_-\equiv \eta$ is a subsolution of \eqref{phi-subsys}.
Hence,}
$\eta\leq \phi\leq 1$.
It can be {further} seen that $\phi$ is a subsolution for the equation solved by $u(t+\hat{T},x)$ for $t>0$ and $-c\hat{T}\leq x\leq c\hat{T}$.

We now investigate the long-time behavior of $\phi$. For convenience, let us define
\beaa
q(t):=1+\frac{aM_1}{\eta}e^{-\delta_1(t+\hat{T})}.
\eeaa
Then, we can rewrite
$$r[\eta(1-\phi)-aM_1 e^{-\delta_1(t+\hat{T})}\phi]=r\eta-r\eta q(t)\phi.$$
Let us further define
\beaa
\Phi(t,x):=e^{Q(t)}[\phi(t,x)-\eta],\quad Q(t):=(r\eta)t-\frac{r aM_1}{\delta_1}e^{-\delta_1(t+\hat{T})}
\eeaa
such that $Q'(t)=r\eta q(t)$.
A straightforward computation {changes} $\phi$-equation into $\Phi$-equation:
\beaa
\begin{cases}
\Phi_t=d\Phi_{xx}+r\eta e^{Q(t)}[1-\eta q(t)], \quad t>0,\ -c\hat{T}<x<c\hat{T},\\
\Phi(t,\pm c\hat{T})=0,\quad t\geq 0,\\
\Phi(0,x)=0,\quad -c\hat{T}\leq x\leq c\hat{T}.
\end{cases}
\eeaa
Using the Green function of {the} heat equation, we have
\beaa
\Phi(t,x)=r\eta\int_0^t e^{Q(\tau)} [1-\eta q(\tau)]\int_{-c\hat{T}}^{c\hat{T}}\widetilde{G}(t,x;\tau,\xi)d\xi d\tau,\quad t>0,\ -c\hat{T}<x<c\hat{T},
\eeaa
where $\widetilde{G}(t,x;\tau,\xi)$ is the green function {(see, e.g., \cite[p.84]{Friedman})} defined by
\beaa\label{Green func}
\widetilde{G}(t,x;\tau,\xi)=\sum_{n\in\mathbb{Z}}(-1)^n G(t-\tau,x-\xi-2nc{\hat T}),
\eeaa
with the heat kernel $G$ given by
\beaa
G(t,x;\tau,\xi)=\frac{1}{\sqrt{4\pi d(t-\tau)}}e^{-\frac{(x-\xi)^2}{4d(t-\tau)}}.
\eeaa

In what follows, we will use an estimate given in \cite[Lemma 6.5]{DuLou}
(note that although $d=1$ therein, the same argument in \cite{DuLou} can yield the estimate for general $d$):
for any $\epsilon\in(0,1)$, there exists $T^{*}\gg1$ such that for all $\hat{T}\geq T^{*}$,
\beaa
\int_{-c \hat{T}}^{c\hat{T} }\widetilde{G}(t,x;\tau,\xi)d\xi d\tau\geq 1-\frac{4}{\sqrt{\pi}}e^{-\frac{\hat{T}}{2\sqrt{d}}}\quad\mbox{for all $(x,t)\in \hat{D}_{\epsilon}$},
\eeaa
where $\hat{D}_{\epsilon}$ is defined by
\beaa
\hat{D}_{\epsilon}:=\left\{(t,x):\  0< t\leq  \frac{\epsilon^2c^2 \hat{T}}{4\sqrt{d}},\quad |x|\leq(1-\epsilon)c \hat{T}\right\}.
\eeaa
In light of this estimate, we obtain
\beaa
\Phi(t,x)&\geq& r\eta \Big(1-\frac{4}{\sqrt{\pi}}e^{-\frac{\hat{T}}{2\sqrt{d}}}\Big)\int_0^t e^{Q(\tau)} [1-\eta q(\tau)]d\tau\\
         &\geq& r\eta \Big(1-\frac{4}{\sqrt{\pi}}e^{-\frac{\hat{T}}{2\sqrt{d}}}\Big) \Big(1-\eta -aM_1 e^{-\delta_1 \hat{T}}\Big) \int_0^t e^{Q(\tau)} d\tau
\eeaa
for all $(t,x))\in \hat{D}_{\epsilon}$.

Recalling the definition  of $\Phi$, we have $$\phi(t,x)=e^{-Q(t)}\Phi(t,x)+\eta.$$ Then
\bea\label{phi-calcu}
\phi(t,x)\geq \Big(1-\frac{4}{\sqrt{\pi}}e^{-\frac{\hat{T}}{2\sqrt{d}}}\Big) \Big( 1-\eta -aM_1 e^{-\delta_1 \hat{T}}\Big) r\eta e^{-Q(t)} \int_0^t e^{Q(\tau)}d\tau  +\eta
\eea
for all $(t,x)\in \hat{D}_{\epsilon}$.
By some simple calculations, we see that
\beaa
r\eta e^{-Q(t)} \int_0^t e^{Q(\tau)}d\tau&=&(r\eta)e^{-r\eta t+K e^{-\delta_{1}(t+\hat{T})}}
\left[\int_0^t e^{r\eta \tau-K e^{-\delta_{1}(\tau+\hat{T})}}d\tau\right]\\
&\geq& (r\eta) e^{-r\eta t} e^{K[e^{-\delta_{1}(t+\hat{T})}-e^{-\delta_{1} \hat{T}}]}{[\frac{1}{r\eta}e^{r\eta\tau}|_{\tau=0}^{\tau=t}]}\\
&=&{ e^{K[e^{-\delta_{1} \hat{T}}(e^{-\delta_{1} t}-1)]} }
    (1-e^{-r\eta t})\\
&=:& {J(t),}
\eeaa
where $K:=raM_1/\delta_1$. {Note that $J(t)\leq1$ for all $t\geq0$.}

Plugging this estimate into \eqref{phi-calcu}, we have
\beaa
\phi(t,x)&\geq& {J(t)\Big(1-\frac{4}{\sqrt{\pi}}e^{-\frac{\hat{T}}{2\sqrt{d}}}\Big) \Big( 1-\eta -aM_1 e^{-\delta_1 \hat{T}}\Big)+\eta}\\
&=& {J(t) \Big(1-\frac{4}{\sqrt{\pi}}e^{-\frac{\hat{T}}{2\sqrt{d}}}\Big) \Big( 1-aM_1 e^{-\delta_1 \hat{T}}\Big)
    +\eta\Big[1-\Big(1-\frac{4}{\sqrt{\pi}}e^{-\frac{\hat{T}}{2\sqrt{d}}}\Big)J(t)\Big] }\\
&\geq&
e^{K[e^{-\delta_1\hat{T}}(e^{-\delta_1 t}-1)]}
(1-e^{-r\eta t})\Big(1-\frac{4}{\sqrt{\pi}}e^{-\frac{\hat{T}}{2\sqrt{d}}}\Big)\Big[1-{a}M_1 e^{-\delta_1 \hat{T}}\Big]
\eeaa
for all $(t,x)\in \hat{D}_{\epsilon}$.
By the fact that $e^{x}\geq 1{+}x$ for all $x$, {and $\delta_1$ can be chosen smaller such that
  $\delta_1<1/(2\sqrt{d})$}, we then obtain
\beaa
\phi(t,x)&\geq& [1-Ke^{-\delta_{1} \hat{T}} (1-e^{-\delta_{1} t})] (1-e^{-r\eta t})\Big(1-\frac{4}{\sqrt{\pi}}e^{-\frac{\hat{T}}{2\sqrt{d}}}\Big) \Big[1-{a}M_1 e^{-\delta_1 \hat{T}}\Big]\\
         &\geq&{(1-Ke^{-\delta_{1} \hat{T}})(1-e^{-r\eta t})\Big(1-\frac{4}{\sqrt{\pi}}e^{-\delta_1\hat{T}}\Big)
         \Big(1-{a}M_1 e^{-\delta_1 \hat{T}}\Big)}\\
         &\geq& 1-{\hat{K}}e^{-\delta_{1}\hat{T}}-  e^{-r\eta t}
\eeaa
for all $(t,x)\in\hat{D}_\epsilon$, by taking $\hat T$ larger if necessary,
{where $\hat{K}$ is some large positive constant.}

Set $t=\epsilon^2 c^2 \hat{T}/(4\sqrt{d})$ and $\epsilon>0$ small enough such that
\beaa
\frac{r\eta \epsilon^2 c^2}{4\sqrt{d}}<\delta_1,
\eeaa
we obtain
\bea
\phi\Big(\frac{\epsilon^2 c^2 \hat{T}}{4\sqrt{d}},x\Big)&\geq& 1-{\hat{K}}e^{-\delta_1 \hat{T}}-  e^{-r\eta \epsilon^2 c^2 \hat{T}/(4\sqrt{d})}\label{phi-est}\\
                                   &\geq&1-({\hat{K}}+1) e^{-r\eta \epsilon^2 c^2 \hat{T}/(4\sqrt{d})}.\notag
                                   \eea
The parabolic comparison principle gives $u(t+\hat{T},x)\geq \phi(t,x)$, which together with \eqref{phi-est} implies
\beaa
u\Big(\frac{\epsilon^2 c^2 \hat{T}}{4\sqrt{d}}+\hat{T},x\Big)&\geq&1-({\hat{K}}+1) e^{-r\eta \epsilon^2 c^2 \hat{T}/(4\sqrt{d})}
\eeaa
for all $|x|\leq(1-\epsilon)c \hat{T}$.
Note that
\beaa
t=\frac{\epsilon^2 c^2 \hat{T}}{4\sqrt{d}}+\hat{T}\quad \Longleftrightarrow\quad \hat{T}=\Big(1+\frac{\epsilon^2 c^2}{4\sqrt{d}}\Big)^{-1}t.
\eeaa
This yields that
\beaa
u(t,x)\geq 1-Me^{\delta_2 t}\quad\mbox{for}\ |x|\leq  {(1-\epsilon)}c\Big(1+\frac{\epsilon^2 c^2}{4\sqrt{d}}\Big)^{-1}t,\quad t\geq T^{**},
\eeaa
where
\beaa
M={\hat{K}}+1,\ \ \delta_2:=r\eta\Big(\frac{\epsilon^2 c^2}{4\sqrt{d}}\Big)\Big(1+\frac{\epsilon^2 c^2}{4\sqrt{d}}\Big)^{-1}>0,\quad T^{**}:=T^{*}+\frac{\epsilon^2 c^2}{4\sqrt{d}}T^{*}.
\eeaa
Since $c$ can be arbitrarily close to $c_{uv}$ and  $\epsilon>0$ can be arbitrarily small, we thus {complete} the proof.
\end{proof}

{
\begin{remark}
The proof in Lemma~\ref{lem:v-rate} and Lemma~\ref{lem:u-rate} provides a method to prove
$(u,v)\to(1,0)$ exponentially over some region once we have known
the locally uniformly convergence of $(u,v)$.
In Lemma~\ref{lem:order-4} below, we will provide an independent proof for Lemma~\ref{lem:v-rate} and Lemma~\ref{lem:u-rate}
by constructing a refined subsolution.
More precisely, given $c\in(0,c_{uv})$, from  Lemma~\ref{lem:order-4}, we obtain that, for some large $T$,
\beaa
u(t,x)&\geq& 2U\Big((c-c_{uv})t-c_{uv}\hat{T}+\zeta_0-\zeta_1e^{-(\beta/2) (t+\hat{T})}\Big)-1-\hat{p}_0e^{-\beta (t+\hat{T})},\\
v(t,x)&\leq& 2(1+\hat{q}_0e^{-\beta (t+\hat{T})})V\Big((c-c_{uv})(t+\hat{T})+\zeta_0-\zeta_1e^{-(\beta/2) (t+\hat{T})}\Big),
\eeaa
for all $t\geq T$ and $|x|\leq ct$, where $\hat{q}_0>0$, $\beta>0$, $\hat{T}>0$, $\zeta_i\in\mathbb{R}$ $(i=0,1)$ can be chosen suitably.
Therefore, Lemma~\ref{lem:v-rate} and Lemma~\ref{lem:u-rate} follow immediately from Lemma~\ref{lem:AS-}.
\end{remark}
}

%%%%%%%%%%%%%%%%%%%%%%%%%%%%%%%%%%%%%%%%%%%%%%%%%%%%%%%%%%%%%%%%%%%%%

%%%%%%%%%%%%%%%%%%%%%%%%%%%%%%%%%%%%%%%%%%%%%%%%%%%%%%%%%%%%%%%%%%%%%%%%%%%%%%%%%%%%%%%%%%%%%%%%%%%%%%%%%%%%%%%%%%%%%%%%%%%%%%%%%%%%%%%%%%%%%%%%%%%%%%%

\section{Proof of Theorem~\ref{thm1}: scenario {\bf(A1)}}\label{sec:A1}
\setcounter{equation}{0}

This section is devoted to the proof of Theorem~\ref{thm1}.
To this aim, we shall construct suitable pairs of supersolutions and subsolutions {when {\bf(A1)} holds}.

{To illustrate our arguments, we start with a simple case;
 that is,
initial data $(u_0,v_0)$ satisfies
\bea\label{simple-ic}
\begin{cases}
%u_0(x)={\bf 1}_{\{x\leq x_u\}},\quad v_0(x)={\bf 1}_{\{x\geq x_v\}}
0\leq u_0, v_0\leq 1,\quad u_0(-\infty)=1=v_0(+\infty),\\
u_0(x)\equiv0\quad  \mbox{for $x\geq x_u$};\quad   v_0(x)\equiv0\quad  \mbox{for $x\leq x_v$}
\end{cases}
\eea
for some $x_u,x_v\in \mathbb{R}$. The simplest example might be $u_0(x)={\bf 1}_{\{x\leq x_u\}}$ and $v_0(x)={\bf 1}_{\{x\geq x_v\}}$.
Note that \eqref{simple-ic} does not satisfy either {\bf(A1)} or {\bf(A2)}. However,
in this case, a suitable super and subsolutions is easier to construct and may provide some clues in constructing
a suitable super and subsolutions for scenario {\bf(A1)}.

Under this initial condition, together with {\bf(H)},
we will see that the species $u$ always wins the competition.
In the first subsection, we shall
construct a suitable super and subsolutions when $(u_0,v_0)$ satisfies \eqref{simple-ic}
and the convergence result will be proved (Proposition~\ref{prop1}) in \S 3.2.
The proof of Theorem~\ref{thm1} is given in \S 3.3.
}

\subsection{{A simple case: scenario \eqref{simple-ic}}}%he construction of super-subsolutions}

{In this subsection, we assume that $(u_0,v_0)$ satisfies \eqref{simple-ic} and prove the following result.
\begin{prop}\label{prop1}
Assume that {\bf(H)} and \eqref{simple-ic} hold.
Then there exists a constant $\hat{h}$ such that the solution $(u,v)$ of \eqref{LV-sys}-\eqref{LV-ic}
satisfies
\bea\label{prop1-result}
\lim_{t\to\infty}\left[\sup_{x\in\mathbb{R}}\Big|u(t,x)-U(x-c_{uv}t-\hat{h})\Big|
+\sup_{x\in\mathbb{R}}\Big|v(t,x)-V(x-c_{uv}t-\hat{h})\Big|\right]=0,
\eea
where $(c_{uv},U,V)$ is a solution of \eqref{TW sys}.
\end{prop}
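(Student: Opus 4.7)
My plan is to use the super/sub-solution method in the style of Fife-McLeod \cite{FM1977}, exploiting the cooperative reformulation of \eqref{LV-sys} obtained via $\tilde v := 1 - v$. After this change of variables, the comparison principle of Lemma~\ref{CP} applies directly, and the profile $(U, 1-V)$ from \eqref{TW sys} becomes a monotone front joining the stable equilibria $(1,1)$ and $(0,0)$ of the transformed system. Hypothesis {\bf(H1)} guarantees that the reaction linearisation at each endpoint has a uniform negative spectral gap, which is the algebraic ingredient that makes envelope super/sub solutions of shifted-wave-plus-exponentially-small-correction type admissible.

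The first step is to produce a front-like sandwich at a suitably large time $T_0$. On the interior region $|x| \le c T_0$ with $c \in (0, c_{uv})$, Lemmas~\ref{lem:v-rate} and \ref{lem:u-rate} yield $u(T_0, x) \ge 1 - Me^{-\delta T_0}$ and $v(T_0, x) \le Me^{-\rho T_0}$. An analogous argument valid on the right half-line --- exploiting the stability of the equilibrium $(0,1)$ under {\bf(H1)} together with $v_0(+\infty) = 1$ from \eqref{simple-ic} --- yields the matching estimates $u(T_0, x) \le M' e^{-\rho' T_0}$ and $v(T_0, x) \ge 1 - M' e^{-\delta' T_0}$ for $x$ sufficiently far to the right. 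Combined with the exponential tails of the wave profile from Lemmas~\ref{lem:AS+}-\ref{lem:AS-}, these estimates produce shifts $h_- \le h_+$ and an error $\varepsilon_0 = O(e^{-\beta T_0})$ with, uniformly on $\mathbb{R}$,
$$
U(x - c_{uv} T_0 - h_-) - \varepsilon_0 \,\le\, u(T_0, x) \,\le\, U(x - c_{uv} T_0 - h_+) + \varepsilon_0,
$$
and the reversed ordering of shifts for $v$ relative to $V$.

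The second step is to build Fife-McLeod envelopes of the form
$$
\overline u(t,x) = U(\xi_+(t,x)) + q(t), \qquad \underline v(t,x) = \bigl[\,V(\xi_+(t,x)) - q(t)\,\bigr]_+,
$$
with $\xi_+(t,x) = x - c_{uv}(t - T_0) - h_+ - \sigma\bigl(1 - e^{-\beta(t - T_0)}\bigr)$ and $q(t) = \varepsilon_0 e^{-\beta(t - T_0)}$, together with the analogous sub-solution pair based on $h_-$ and the opposite sign of $\sigma$. The initial sandwich at $T_0$ places $(u,v)(T_0, \cdot)$ between these two envelope pairs, so Lemma~\ref{CP} propagates the ordering for all $t \ge T_0$. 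Letting $T_0 \to \infty$ drives $\varepsilon_0 \to 0$ and, through the squeezing, forces $h_+ - h_- \to 0$; the common limit of the two shift parameters gives the desired $\hat h$ in \eqref{prop1-result}.

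The main obstacle is verifying the differential inequalities for the envelopes, which splits into two regimes: a \emph{core} region on $|\xi_+| = O(1)$ where the residual is absorbed by the velocity correction $c_{uv}\sigma\beta\, e^{-\beta(t-T_0)}$ acting against the uniformly positive $|U'| + |V'|$, and a \emph{tail} region on $|\xi_+| \gg 1$ where the endpoint spectral gap (from $a, b > 1$) combined with the decay rates from Lemmas~\ref{lem:AS+}-\ref{lem:AS-} must dominate the lower-order reaction terms. Balancing these two regimes fixes the admissible range of $\sigma$ and $\beta$. A secondary but necessary check is the ahead-of-front estimate of Step~1 on the right half-line, which is not directly provided by the lemmas in Section~\ref{preliminaries} but can be obtained by re-running their proofs with the roles of the two equilibria $(1,0)$ and $(0,1)$ swapped.
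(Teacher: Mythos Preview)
Your envelope construction in Step~2 and the core/tail dichotomy for the differential inequalities are essentially what the paper does in its Lemmas~3.1--3.4 (the paper mixes additive and multiplicative corrections, but the mechanism is the same). The genuine gap is in your Step~3.

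You assert that ``letting $T_0 \to \infty$ drives $\varepsilon_0 \to 0$ and, through the squeezing, forces $h_+ - h_- \to 0$''. This does not follow from what you have set up. Once you propagate the envelopes from time $T_0$, the solution is trapped for all $t \ge T_0$ between $U(x - c_{uv}t - h_-+\sigma)$ and $U(x - c_{uv}t - h_+-\sigma)$ up to an $o(1)$ error; the correction $q(t)$ decays, but the shift gap $(h_+ - h_-) + 2\sigma$ is \emph{frozen}. Restarting at a later $T_0'$ gives a new pair $h'_\pm$ satisfying only $|h'_+ - h'_-| \le (h_+ - h_-) + 2\sigma$, which is no improvement. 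Moreover, your Step~1 estimates (from Lemmas~\ref{lem:v-rate}--\ref{lem:u-rate}) control the solution only on $|x|\le cT_0$ with $c<c_{uv}$ and far to the right; they say nothing about the transition zone near $x\approx c_{uv}T_0$, so the initial gap $h_+(T_0)-h_-(T_0)$ need not be $O(\varepsilon_0)$ --- it could even grow with $T_0$. Either way, the contraction of the shift gap is precisely the content of the proposition, and your outline does not supply a mechanism for it.

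The paper closes this gap by a different device (its Lemma~\ref{lem: h}): it passes to an entire-in-time limit $(u^\infty,v^\infty)$ along $t_n\to\infty$, defines $h_1=\inf\{h:u^\infty\le U(\cdot-h),\,v^\infty\ge V(\cdot-h)\}$ and the analogous $h_2$, and proves $h_1=h_2$ via a sliding argument combined with the strong maximum principle and the local asymptotic stability result of \cite{KF1996}. This Liouville-type step is where all the substance lies. Your proposal would become correct if you either reproduced such an argument, or explicitly invoked the asymptotic stability theorem of \cite{KF1996} after obtaining your Step~2 sandwich; as written, the convergence is asserted rather than proved. (Fife--McLeod themselves need an analogous step --- an $\omega$-limit characterisation --- so citing \cite{FM1977} alone does not close the loop.)
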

}

\subsubsection{The construction of a subsolution}

Denote a subsolution $(\underline{u},\overline{v})$ by
\bea\label{lower-sol}
\begin{cases}
\ \underline{u}(t,x):=\max\{U(x-c_{uv}t+\eta(t))-p(t),0\},\\
\ \overline{v}(t,x):=(1+q(t))V(x-c_{uv}t+\eta(t)),
\end{cases}
\eea
where
\bea\label{parameters}
p(t)=p_0 e^{-\alpha t},\quad q(t)=q_0  e^{-\alpha t},\quad \eta(t)=\eta_0-\eta_1 e^{-(\alpha/2)t}
\eea
for some constants $p_0>0$, $q_0>0$, $\alpha>0$ and $\eta_i\in\mathbb{R}$ $(i=0,1)$ that will be determined later.

\begin{lemma}\label{lem:lower-sol}
For any $p_0,q_0,\alpha,\eta_1>0$  satisfying
\bea\label{cond-lem1}
\alpha<\min\{r,1,(a-1)r\}, \ {p_0<\frac{q_0}{b}\Big(\frac{1-\alpha}{2}\Big)},
\eea
there exists $T^{*}\geq0$ such that
\bea\label{lower-ineq0}
N_1[\underline{u},\overline{v}]\leq0,\quad N_2[\underline{u},\overline{v}]\geq0\quad \mbox{in $[T^*,\infty)\times (-\infty,\infty)$}
\eea
for all $\eta_0\in\mathbb{R}$, where $\underline{u}$ and $\overline{v}$ are defined in \eqref{lower-sol}.
\end{lemma}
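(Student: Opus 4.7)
The plan is to substitute \eqref{lower-sol}--\eqref{parameters} directly into $N_1, N_2$ and use the traveling wave equation \eqref{TW sys} to cancel the second-derivative terms $dU''$ and $V''$. With $\xi:=x-c_{uv}t+\eta(t)$, a direct computation in the region $\{U(\xi)>p(t)\}$ gives
\[
N_1[\underline{u},\overline{v}] = \eta'(t)U'(\xi) + \alpha p + rp(1-2U+p) - (1+q)rap\,V + raq\,UV,
\]
\[
N_2[\underline{u},\overline{v}] = V\bigl[-\alpha q + (1+q)qV - (1+q)bp\bigr] + (1+q)\eta'(t)V'(\xi);
\]
in the region $\{U(\xi)\le p(t)\}$ one has $\underline u\equiv 0$, so $N_1\equiv 0$ automatically while $N_2$ takes the same form as above with $bp$ replaced by $bU$. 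I then verify the correct sign of each expression by splitting $\mathbb{R}$ into three $\xi$-regions ($\xi\to-\infty$, compact, $\xi\to+\infty$), invoking Lemma \ref{lem:AS+} and Lemma \ref{lem:AS-} for the tail asymptotics of $(U,V)$ and using the strict monotonicity $U'<0$, $V'>0$ on compact sets.

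The engine of the construction is that $\eta'(t)=(\alpha/2)\eta_1 e^{-(\alpha/2)t}$ decays strictly slower than $p(t), q(t)\sim e^{-\alpha t}$. On any compact $\xi$-interval $|U'|$ and $V'$ are bounded below by some $\delta>0$, so once $t$ is large the terms $\eta'(t)U'(\xi)<0$ and $(1+q)\eta'(t)V'(\xi)>0$ dominate in $N_1$ and $N_2$ respectively and absorb all $O(e^{-\alpha t})$ perturbations. The three hypotheses in \eqref{cond-lem1} close the argument at the spatial infinities: $\alpha<r$ yields $N_1\sim\eta'U'+(\alpha-r)p<0$ as $\xi\to-\infty$ (where $U\to 1$, $V\to 0$); $\alpha<(a-1)r$ yields $N_1\sim\eta'U'+p[\alpha-(a-1)r]+raqU<0$ as $\xi\to+\infty$ in Case~1, where the sole positive term $raqU$ is dominated by $|\eta'U'|\sim|\lambda_1|\eta'U$ (via Lemma \ref{lem:AS+} and $\eta'\gg q$); and $p_0<q_0(1-\alpha)/(2b)$ yields $N_2\gtrsim q(1-\alpha)-bp\ge q(1-\alpha)/2>0$ as $\xi\to+\infty$ (in Case~2 we use $U\le p$ to bound $bU\le bp$). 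For $N_2$ as $\xi\to-\infty$, the sharper asymptotic $V'\sim\lambda_4 V$ from Lemma \ref{lem:AS-} lets me factor $V$ from every term and reduce the inequality to $-\alpha q-bp+\lambda_4\eta'>0$, which holds for large $t$ since $\lambda_4>0$ and $\eta'\gg p,q$.

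Combining the three regional estimates produces a single threshold $T^*$ depending only on $p_0, q_0, \alpha, \eta_1$ (not on $\eta_0$, which merely translates $\xi$). The non-smoothness of $\underline u$ along the curve $\{U(\xi)=p(t)\}$ is handled by the remark following Lemma \ref{CP}: the classical inequality $N_1[U-p,\overline v]\le 0$ holds for $U>p$, and $N_1[0,\overline v]=0\le 0$ trivially, so the maximum is a weak subsolution for $N_1$, while the $N_2$ inequality is verified consistently on both pieces. The main obstacle is the simultaneous uniformity of the three regional estimates in $\xi$. The reason it can be overcome is exactly the choice of $\eta$ decaying at the intermediate rate $\alpha/2$: any faster decay would ruin the dominance of $\eta'U'$ and $(1+q)\eta'V'$ over the $e^{-\alpha t}$ perturbations, while a slower (e.g.\ polynomial) $\eta$ would be too rigid to produce a subsolution at all.
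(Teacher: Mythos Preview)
Your argument is correct and follows essentially the same strategy as the paper's proof: compute $N_1,N_2$ explicitly via \eqref{TW sys}, split $\xi\in\mathbb{R}$ into three regions, let $\eta'U'$ and $(1+q)\eta'V'$ dominate on the compact middle piece (exploiting $\eta'\sim e^{-(\alpha/2)t}\gg e^{-\alpha t}$), and close the tail estimates with Lemmas~\ref{lem:AS+}--\ref{lem:AS-} together with the algebraic constraints in \eqref{cond-lem1}. The only minor variation is that as $\xi\to+\infty$ you control the stray positive term $raqUV$ by $|\eta'U'|\sim|\lambda_1|\eta'\,U$ (using $\eta'\gg q$), whereas the paper instead bounds $U<\epsilon$ and absorbs $raq\epsilon$ directly into the negative coefficient $\alpha-(a-1)r$; both routes work.
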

\begin{proof}
{Fix} any small $\epsilon>0$ satisfying
\bea
&&\epsilon<{\frac{(r-\alpha)p_0}{r(2p_0+aq_0)}},\label{ep-cond1}\\
&&\epsilon<\frac{(a-1)r-\alpha}{2ra(1+q_0 p_0^{-1})},\label{ep-cond2}\\
&&\epsilon<\frac{1-\alpha}{4}.\label{ep-cond3}
\eea
Since $(U,V)(-\infty)=(1,0)$ and $(U,V)(\infty)=(0,1)$,
there exists a sufficiently large constant $M$ such that
\bea
&&1>U(\xi)>1-\epsilon,\quad V(\xi)<\epsilon\quad\mbox{for all $\xi\leq-M$},\label{left side}\\
&&U(\xi)<\epsilon,\quad 1>V(\xi)>1-\epsilon\quad\mbox{for all $\xi\geq M$}.\label{right side}
\eea

For simplicity, we set $\xi=x-c_{uv} t+\eta(t)$ and write $U=U(\xi)$ (resp., $V=V(\xi)$).
Also, we assume $\underline{u}>0$ first, i.e.,
$\underline{u}(t,x)=U(\xi)-p(t)>0$.

Then, by direct computations, we get from the first equation of \eqref{TW sys} that
\bea\label{N1-ineq-lower}
&&N_1[\underline{u},\overline{v}](t,x)\\
&=&\eta'U'-c_{uv} U'-p'-dU''-r(U-p)[1-U+p-a(1+q)V]\notag\\
&=&\eta'U'+rU(1-U-aV)-p'-r(U-p)[1-U+p-a(1+q)V]\notag\\
&=&\eta'U'-p'-rU(p-aqV)+rp[1-U+p-a(1+q)V].\notag
\eea
Also, by the second equation of \eqref{TW sys}, we have
\bea\label{N2-ineq-lower}
&&N_2[\underline{u},\overline{v}](t,x)\\
&=&q'V+(1+q)(-c_{uv}+\eta')V'-(1+q)V''-(1+q)V[1-(1+q)V-b(U-p)]\notag\\
&=&q'V+(1+q)[V(1-V-bU)+\eta'V']-(1+q)V[(1-(1+q)V-bU+bp]\notag\\
&=&q'V+(1+q)\eta'V'-(1+q)V(bp-qV).\notag
\eea

Notice that if $\underline{u}=0$, then clearly $N_1[\underline{u},\overline{v}]=0$; while from \eqref{N2-ineq-lower}
we see that $\underline{u}=0$ does not affect the equality in \eqref{N2-ineq-lower}.
Hence we can %only consider
{restrict the analysis to the case}
 $\underline{u}(t,x)=U(\xi)-p(t)$.

We now divide our discussion into three cases:
\beaa
\mbox{(i) $\xi< -M$; \quad   (ii) $|\xi|\leq M$;  \quad (iii) $\xi>M$.}
\eeaa

{\bf Case (i)}. By the fact that $\eta'>0$ (since $\alpha,\,\eta_1>0$) and $U'<0$, we have $\eta'U'<0$.
Combined with \eqref{left side} and \eqref{N1-ineq-lower} we deduce
\beaa
N_1[\underline{u},\overline{v}](t,x)
&\leq&-p'-rU(p-aqV)+rp[1-U+p]\\
&\leq&-p'-r(1-\epsilon)p+ra\epsilon q +rp(\epsilon+p)\\
&=&-p'-rp+rp^2+2r\epsilon p+r\epsilon a q\\
&=& [(\alpha -r+r p_0e^{-\alpha t})p_0+r\epsilon(2p_0+aq_0)] e^{-\alpha t}.
\eeaa
Thanks to  \eqref{ep-cond1},
we see that {there exists $T_0\gg1$ such that}
$N_1[\underline{u},\overline{v}]\leq0$ for all {$t\geq T_0$}.

On the other hand, since $V'(\cdot)/V(\cdot)\geq \kappa_0$ in $(-\infty,-M]$ for some $\kappa_0>0$ (due to Lemma~\ref{lem:AS-}), from \eqref{N2-ineq-lower} we have
\beaa
N_2[\underline{u},\overline{v}](t,x)
&\geq&\left[\frac{q'}{1+q}+\kappa_0\eta'-bp\right](1+q)V\\
&\geq& e^{-(\alpha/2)t}\Big[-\alpha q_0 e^{-(\alpha /2)t}+ \kappa_0\eta_1\frac{\alpha}{2}-b p_0 e^{-(\alpha /2)t}\Big](1+q)V.
\eeaa
Thus, one can find $T_1\gg1$ such that $N_2[\underline{u},\overline{v}](t,x)\geq0$  for all $(x,t)$ satisfying  $\xi<-M$ and $t\geq T_1$.

{\bf Case (ii)}. Since $U'<0$ in $\mathbb{R}$, we have $\max_{\xi\in[-M,M]}U'(\xi)=-\kappa_1<0$. Also, by virtue of $V\leq 1$, it is easily seen that
\beaa
N_1[\underline{u},\overline{v}](t,x)&\leq&-\eta'\kappa_1-p'-rU(p-aq)+rp(1+p)\\
&=&-\frac{\alpha}{2}\eta_1\kappa_1 e^{-(\alpha/2)t}+O(1)e^{-\alpha t}.
\eeaa
Therefore, there exists $T_2\gg1$ such that $N_1[\underline{u},\overline{v}](t,x)\leq0$  for all $(x,t)$ satisfying $|\xi|\leq-M$ and $t\geq T_2$.

Since $V'>0$ in $\mathbb{R}$, we have $\min_{\xi\in[-M,M]}V'(\xi)=\kappa_2>0$. Then, it holds
\beaa
N_2[\underline{u},\overline{v}](t,x)&\geq&q'V+\kappa_2\eta'-(1+q)Vbp=\frac{\alpha}{2}\eta_1\kappa_2 e^{-(\alpha/2)t}-O(1)e^{-\alpha t}.
\eeaa
Hence, there exists $T_3\gg1$ such that $N_2[\underline{u},\overline{v}](t,x)\geq0$  for all $(x,t)$ satisfying  $|\xi|\leq-M$ and $t\geq T_3$.

{\bf Case (iii)}. Using $\eta'U'<0$, we have
\beaa
N_1[\underline{u},\overline{v}](t,x)&\leq&-p'+raqUV+rp+rp^2-rpaV\\
&\leq&-p'+raq\epsilon+rp+rp^2-rpa(1-\epsilon)\quad \mbox{(thanks to \eqref{right side})}\\
&=&-p'-(a-1)rp+rp^2+ra(p+q)\epsilon\\
&=&\Big[\alpha  -(a-1)r + r p_0 e^{-\alpha t} +ra\Big(1+\frac{q_0}{p_0}\Big)\epsilon\Big] p_0 e^{-\alpha t}\\
&\leq&\Big[\frac{\alpha  -(a-1)r}{2} + r p_0 e^{-\alpha t}\Big] p_0 e^{-\alpha t}\quad \mbox{(using \eqref{ep-cond2})}.
\eeaa
By \eqref{cond-lem1}, there exists $T_4\gg1$ such that $N_1[\underline{u},\overline{v}](t,x)\leq0$  for all $(x,t)$ satisfying  $\xi>M$ and $t\geq T_4$.

On the other hand, by means of $\eta'V'>0$ and \eqref{right side}, we obtain
\beaa
N_2[\underline{u},\overline{v}](t,x)&\geq&q' V +(1+q)V(qV-bp)\\
&\geq&V[-q_0 \alpha +q_0(1-\epsilon)^2-(1+q_0 e^{-\alpha t})b p_0]e^{-\alpha t}\\
&=&V[q_0((1-\epsilon)^2-\alpha)-b p_0-b p_0q_0 e^{-\alpha t}]e^{-\alpha t}.
\eeaa
In view of \eqref{ep-cond3}, we deduce
\beaa
q_0((1-\epsilon)^2-\alpha)-b p_0\geq q_0(1-2\epsilon-\alpha)-b p_0\geq
 q_0 \Big(\frac{1-\alpha}{2}\Big)-b p_0 >0,
\eeaa
where the last inequality follows from \eqref{cond-lem1}.
Hence, there exists $T_5\gg1$ such that $N_2[\underline{u},\overline{v}](t,x)\geq0$ for all $(x,t)$ such that $\xi>M$ and $t\geq T_5$.

Combining the discussions in cases (i)-(iii) and taking  $T^{*}:=\max\{{T_0},T_1,T_2,T_3,T_4,T_5\}\geq0$, we have proved \eqref{lower-ineq0} for all $x\in\mathbb{R}$ and $t\geq T^*$.
This completes the proof.
\end{proof}

\medskip

Next, we shall show that the parameters in $(\underline{u},\overline{v})$ can be chosen suitably such that
it can {be compared with} the solution $(u,v)$ of \eqref{LV-sys} and {\eqref{simple-ic}}. %from a large time.

\begin{lemma}\label{lem:order-1}
Let $(\underline{u},\overline{v})$ be defined in \eqref{lower-sol} and satisfy \eqref{cond-lem1}.
Then there exist small $\alpha^{*}>0$ and large $T^*>0$ and $\eta_0^{*}>0$ such that {the solution $(u,v)$
of \eqref{LV-sys} and \eqref{simple-ic} satisfies
\beaa
u(t,x)\geq\underline{u}(t+T^*,x),\quad v(t,x)\leq \overline{v}(t+T^*,x)\quad \mbox{for $t\geq0$ and $x\in\mathbb{R}$},
\eeaa
}
provided that $\alpha\in(0, \alpha^{*}]$ and $\eta_0\geq\eta_0^{*}$.
\end{lemma}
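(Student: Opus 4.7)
The strategy is to apply the comparison principle (Lemma \ref{CP}) on the strip $(T^{*},\infty)\times\mathbb{R}$, treating $(\underline{u},\overline{v})$ as a subsolution and the time-shifted genuine solution $\bigl(u(\cdot-T^{*},\cdot),\,v(\cdot-T^{*},\cdot)\bigr)$ as a trivial supersolution. Once the initial ordering at the slice $t=T^{*}$ is secured, the conclusion follows simply by relabeling time.

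Take any $\alpha^{*}<\min\{r,1,(a-1)r\}$ and fix $p_0,q_0,\eta_1>0$ satisfying \eqref{cond-lem1}. By Lemma \ref{lem:lower-sol}, for every $\alpha\in(0,\alpha^{*}]$ the subsolution inequalities $N_1[\underline{u},\overline{v}]\leq 0$ and $N_2[\underline{u},\overline{v}]\geq 0$ hold on $[T^{*},\infty)\times\mathbb{R}$ as soon as $T^{*}$ is chosen sufficiently large (depending on $\alpha$ but crucially \emph{not} on $\eta_0$, as inspection of the proof of Lemma \ref{lem:lower-sol} shows). Hence the remaining task is to verify the pointwise ordering
\[
u_0(x)\geq \underline{u}(T^{*},x) \text{ and } v_0(x)\leq \overline{v}(T^{*},x) \quad\text{for all } x\in\mathbb{R}.
\]
The guiding principle is that both $\underline{u}(T^{*},\cdot)$ and $\overline{v}(T^{*},\cdot)$ are monotone front-like profiles whose transition layers sit at $x\approx c_{uv}T^{*}-\eta(T^{*})$, so enlarging $\eta_0$ translates them arbitrarily far to the left.

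Quantitatively, use Lemma \ref{lem:AS+} to pick $\Xi_+\in\mathbb{R}$ with $U(\xi)\leq p(T^{*})$ for $\xi\geq\Xi_+$; the monotonicity of $V$ together with $V(+\infty)=1$ yields $\Xi_-\in\mathbb{R}$ with $V(\xi)\geq 1/(1+q(T^{*}))$ for $\xi\geq\Xi_-$; and the hypothesis $u_0(-\infty)=1$ provides $X_-\in\mathbb{R}$ with $u_0(x)\geq 1-p(T^{*})$ on $(-\infty,X_-]$. Set
\[
\eta_0^{*}:=c_{uv}T^{*}+\eta_1+\max\bigl\{\Xi_+-\min(X_-,x_u),\,\Xi_--x_v\bigr\}.
\]
Splitting into the regions $\{x\leq X_-\}$, $\{X_-<x<x_u\}$, $\{x\geq x_u\}$ for $u$, and $\{x\leq x_v\}$, $\{x>x_v\}$ for $v$, a direct case analysis yields the desired bounds: wherever $u_0$ might vanish, the choice of $\eta_0\geq\eta_0^{*}$ forces $\xi:=x-c_{uv}T^{*}+\eta(T^{*})\geq\Xi_+$, hence $\underline{u}(T^{*},x)=\max\{U(\xi)-p(T^{*}),0\}=0$; on the left tail one uses $\underline{u}(T^{*},x)\leq 1-p(T^{*})\leq u_0(x)$; the $v$-side is symmetric, since for $x>x_v$ one obtains $\overline{v}(T^{*},x)\geq 1\geq v_0(x)$, while for $x\leq x_v$ we have $v_0(x)=0\leq\overline{v}(T^{*},x)$.

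The main, if mild, obstacle is that $\eta_0^{*}$ inevitably depends on $T^{*}$ through the geometric shift $c_{uv}T^{*}$ and through the decay levels $p(T^{*}),q(T^{*})$, so the parameters must be selected in the fixed order $\alpha\to(p_0,q_0,\eta_1)\to T^{*}\to\eta_0^{*}$; no circularity arises because $T^{*}$ is already pinned down in the previous step. With the initial ordering in place, Lemma \ref{CP} applied to the cooperative transform of \eqref{LV-sys} propagates it to the whole of $[T^{*},\infty)\times\mathbb{R}$, and a relabeling $t\mapsto t+T^{*}$ delivers the statement of the lemma.
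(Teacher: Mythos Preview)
Your proof is correct and follows essentially the same approach as the paper's: apply Lemma~\ref{lem:lower-sol} to secure the differential inequalities on $[T^*,\infty)\times\mathbb{R}$ (noting, as you do, that $T^*$ is independent of $\eta_0$), then push $\eta_0$ large to order the initial data at $t=T^*$, and conclude by comparison. The paper's proof is terser---it merely records $\underline{u}(T^*,-\infty)=1-p(T^*)<1$, $\overline{v}(T^*,+\infty)=1+q(T^*)>1$, and says the initial ordering is ``obvious'' for $\eta_0\gg1$---whereas you spell out an explicit choice of $\eta_0^*$ via thresholds $\Xi_\pm$, $X_-$ and a clean case split; this is a welcome elaboration but not a different method.
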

{
\begin{proof}
First, by Lemma~\ref{lem:lower-sol}, there exist $T^*\gg1$ and $\alpha^*>0$ such that
\beaa
N_1[\underline{u},\overline{v}](t+T^*,x)\leq0,\quad N_2[\underline{u},\overline{v}](t+T^*,x)\geq0
\quad \mbox{for $t\geq0$ and $x\in\mathbb{R}$},
\eeaa
as long as $\alpha\in(0, \alpha^{*}]$.

For $t=0$, we have $\underline{u}(T^*,-\infty)=1-p_0 e^{-\alpha T^*}<1$ and $\overline{v}(T^*,+\infty)=1+q_0 e^{-\alpha T^*}>1$.
Together with \eqref{simple-ic},
it is obvious that there exists $\eta_0^{*}\gg1$ such that
$u(0,x)\geq\underline{u}(T^*,x)$ and $v(0,x)\leq \overline{v}(T^*,x)$ for all $x\in\mathbb{R}$
as long as $\eta_0\geq \eta_0^{*}$. Then the desired result follows from the comparison principle.
\end{proof}
}

\medskip

\subsubsection{The construction of a supersolution}

To seek a pair of supersolution, we define
\bea\label{upper-sol}
\begin{cases}
\ \overline{u}(t,x)=(1+q(t))U(x-c_{uv}t+\eta(t)),\\
\ \underline{v}(t,x)=\max\{V(x-c_{uv}t+\eta(t))-p(t),0\},
\end{cases}
\eea
where $p$, $q$ and $\eta$ have the same form as in \eqref{parameters}.

The following lemma is  parallel to Lemma~\ref{lem:lower-sol}; we only give some sketch of the proof.
%%%%%%%%%%%%%%%%%%%%%%%%%%%%%%%%%%%%%%%%%%%%%%%%%%%%%%%%%%%%%%%%%%%%%%%%%%%%%%%%%%%%%%%%%%%%%%%%%%%%%%%%%%%%%%%%%%%%%%%%%%%%%%%%%%%%%%%%%%%%%%%%%%%%%
\begin{lemma}\label{lem:super-sol}
For any $p_0,q_0>0,\alpha\in(0,1)$ and $\eta_1<0$ satisfying
\bea\label{cond-lem2}
\alpha<\min\{r,1,b-1\},\ p_0< {\frac{q_0(1-\alpha)}{2a}},
\eea
there exists $T^{**}\geq0$ such that
\bea\label{upper-ineq}
N_1[\overline{u},\underline{v}]\geq0,\quad N_2[\overline{u},\underline{v}]\leq0\quad \mbox{in $[T^{**},\infty)\times (-\infty,\infty)$}
\eea
for all $\eta_0\in\mathbb{R}$, where $\overline{u}$ and $\underline{v}$ are defined in \eqref{upper-sol}.
\end{lemma}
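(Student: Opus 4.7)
The proof mirrors that of Lemma~\ref{lem:lower-sol}, with the twist that we now need $N_1[\overline u,\underline v]\geq 0$ and $N_2[\overline u,\underline v]\leq 0$, which is why the sign of $\eta_1$ is reversed. Since $\underline v\equiv 0$ trivially gives $N_2=0$, we may restrict to the region $\underline v=V(\xi)-p(t)>0$, where $\xi:=x-c_{uv}t+\eta(t)$. Using \eqref{TW sys} to eliminate $c_{uv}U'+dU''$ and $c_{uv}V'+V''$, a direct computation yields
\begin{align*}
N_1[\overline u,\underline v] &= q'U+(1+q)\eta'U'+r(1+q)U(qU-ap),\\
N_2[\overline u,\underline v] &= \eta'V'-p'+p(1-2V+p-bU)+(V-p)bqU.
\end{align*}

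The key observation is that $\eta_1<0$ gives $\eta'(t)=(\alpha/2)\eta_1 e^{-(\alpha/2)t}<0$, so that $\eta'U'>0$ (since $U'<0$) helps $N_1\geq 0$ while $\eta'V'<0$ (since $V'>0$) helps $N_2\leq 0$, exactly the opposite configuration from Lemma~\ref{lem:lower-sol}. I would fix $\epsilon>0$ small (depending on $p_0,q_0,\alpha$) and choose $M$ so large that $U\geq 1-\epsilon,\ V\leq\epsilon$ for $\xi\leq -M$; $U\leq\epsilon,\ V\geq 1-\epsilon$ for $\xi\geq M$; and, thanks to Lemmas~\ref{lem:AS+}--\ref{lem:AS-}, the ratio bounds $U'/U\leq\lambda_1/2<0$ hold for $\xi\geq M$ and $V'/V\geq\kappa_0>0$ for $\xi\leq -M$. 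The argument then splits into the three cases $\xi<-M$, $|\xi|\leq M$, $\xi>M$.

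For $\xi<-M$, the dominant balance in $N_1$ reads $[q_0(r-\alpha)-rap_0]\,e^{-\alpha t}\geq 0$, which holds thanks to $\alpha<r$ and $p_0<q_0(1-\alpha)/(2a)$; the dominant balance in $N_2$ is $[\alpha-(b-1)+O(p,\epsilon)]\,p\leq 0$, which holds thanks to $\alpha<b-1$. For $\xi>M$, the dominant balance in $N_2$ is $(\alpha-1)p+O(\epsilon)\leq 0$ since $\alpha<1$; in $N_1$ the positive contribution $(1+q)\eta'U'\geq c_0\,U\,e^{-(\alpha/2)t}$ (obtained by combining $|\eta'|$ being of order $e^{-(\alpha/2)t}$ with $|U'|\geq(|\lambda_1|/2)\,U$) decays only at rate $e^{-(\alpha/2)t}$, whereas all remaining terms are $O(e^{-\alpha t})$, so the positive term prevails for large $t$. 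In the middle region $|\xi|\leq M$, both $-U'\geq\kappa_1>0$ and $V'\geq\kappa_2>0$ on a compact interval, so the $\eta'U'$ and $\eta'V'$ terms (of order $e^{-(\alpha/2)t}$) dominate the $O(e^{-\alpha t})$ remainder in both $N_1$ and $N_2$. Taking $T^{**}$ to be the maximum of the finitely many threshold times produced then yields \eqref{upper-ineq}.

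The main subtlety, as in Lemma~\ref{lem:lower-sol}, is the bookkeeping: one must choose $\epsilon$ small enough that the asymptotic regions faithfully reflect the end states $(1,0)$ and $(0,1)$, while at the same time exploiting the scale separation between $e^{-(\alpha/2)t}$ (the $\eta$-scale) and $e^{-\alpha t}$ (the $(p,q)$-scale). The conditions in \eqref{cond-lem2} are precisely what make each of the above dominant balances go the right way.
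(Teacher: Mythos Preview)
Your approach is essentially the paper's: compute $N_1,N_2$ via \eqref{TW sys}, split into the three $\xi$-regions, and exploit $\eta'<0$ (so $\eta'U'>0$, $\eta'V'<0$) together with the scale separation between $e^{-(\alpha/2)t}$ and $e^{-\alpha t}$. Your use of $|U'/U|\geq|\lambda_1|/2$ for $\xi>M$ to make $(1+q)\eta'U'$ the dominant positive contribution in $N_1$ is exactly the mechanism the paper invokes (it points to Case~(i) of Lemma~\ref{lem:lower-sol} as the template).

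There is one genuine gap. In the region $\xi<-M$ you correctly identify the dominant balance of $N_1$ as $[q_0(r-\alpha)-rap_0]\,e^{-\alpha t}$, but the assertion that this is nonnegative ``thanks to $\alpha<r$ and $p_0<q_0(1-\alpha)/(2a)$'' fails when $r<1$. For example, with $r=1/2$, $\alpha=0.45$, $q_0=1$, $p_0=0.25/a$ (which satisfies $p_0<0.275/a=q_0(1-\alpha)/(2a)$) one gets $q_0(r-\alpha)-rap_0=0.05-0.125<0$. What is actually needed in this case is $p_0<q_0(r-\alpha)/(ra)$; the hypothesis \eqref{cond-lem2} implies this only when $r\geq 1$ (since then $(r-\alpha)/r\geq 1-\alpha>(1-\alpha)/2$). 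The paper's own write-up of Case~(i) contains the same slip---an $r$-factor is silently dropped in its displayed lower bound for $N_1$---and because the lemma is only ever applied with $\alpha$ taken arbitrarily small (Lemma~\ref{lem:order-2}), the defect is harmless downstream. But strictly speaking, neither your sketch nor the paper's proof establishes the inequality for all parameters permitted by \eqref{cond-lem2}.
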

\begin{proof} As in the proof of Lemma~\ref{lem:lower-sol}, for any sufficiently small $\epsilon>0$,
there exists a sufficiently large constant $M$ such that \eqref{left side} and \eqref{right side} hold.
Denote $x-c_{uv}t+\eta(t)$ by $\xi$ and write $U=U(\xi)$ (resp., $V=V(\xi)$).
By direct computations, we have
\bea\label{N1-ineq-upper}
&&N_1[\overline{u}, \underline{v},](t,x)\\
&=&q'U+(1+q)(-c_{uv}+\eta')U'-(1+q)U''-r(1+q)U[1-(1+q)U-a V+ap]\notag\\
&=&q'U+(1+q)\eta'U'-r(1+q)U(ap-qU)\notag
\eea
and
\beaa\label{N2-ineq-upper}
&&N_2[\overline{u},\underline{v}](t,x)\\
&=&\eta'V'-c_{uv}V'-dV''-p'-(V-p)[1-V+p-b(1+q)U]\notag\\
&=&\eta'V'-p'(t)-V(p-bqU)+p[1-V+p-b(1+q)U].\notag
\eeaa

Similar to the proof of Lemma~\ref{lem:lower-sol},
we divide our discussion into three cases:

\begin{center} (i) $\xi< -M$;\ \ \ (ii) $|\xi|\leq M$;\ \ \ (iii) $\xi>M$.
\end{center}

{\bf Case (i)}: this part can be done similarly as in Case (iii) of the proof of Lemma~\ref{lem:lower-sol}.
By \eqref{left side} and the fact that $\eta'U'>0$ (since $\eta_1<0$ and $U'<0$), from \eqref{N1-ineq-upper} it follows
\beaa
N_1[\overline{u}, \underline{v}](t,x)
&\geq& q' U+r(1+q)U(qU-ap)\\
&\geq& U[-\alpha q_0+q_0(1-\epsilon)^2-r(1+q_0e^{-\alpha t})ap_0 e^{-\alpha t}]e^{-\alpha t}.
\eeaa
Due to \eqref{cond-lem2} and the fact that $\epsilon$ can be chosen smaller than $(1-\alpha)/4$, we further have
\beaa
q_0((1-\epsilon)^2-\alpha)-a p_0\geq q_0(1-2\epsilon-\alpha)-a p_0\geq
 q_0\Big(\frac{1-\alpha}{2}\Big)-a p_0 >0.
\eeaa
Then there exists $T_1\gg1$ such that $N_1[\overline{u}, \underline{v},](t,x)\geq0$ for all $(x,t)$ fulfilling $\xi<-M$ and $t\geq T_1$.

On the other hand, in view of $\eta'U'>0$ and the behavior of $U$ and $V$ near $-\infty$, one also knows that
\beaa
N_2[\underline{u},\overline{v}](t,x)&\leq&-p'+bq\epsilon+p+p^2-bp(1-\epsilon)\\
&=&[\alpha  -(b-1) + p_0 e^{-\alpha t} +O(1)\epsilon] p_0 e^{-\alpha t}.
\eeaa
Hence, thanks to \eqref{cond-lem2} and the fact that $\epsilon$ can be chosen smaller if necessary, there exists $T_2\gg1$ such that $N_2[\underline{u},\overline{v}](t,x)\geq0$  for all $(x,t)$ satisfying  $\xi<-M$ and $t\geq T_2$.

Case (ii) and Case (iii) can be handled by the similar process as in Case (ii) and Case (i) of the proof of Lemma~\ref{lem:lower-sol}, respectively; we omit the details here.

According to the above analysis, we see that there exists $T^{**}\geq0$ such that \eqref{upper-ineq} holds,
which completes the proof.
\end{proof}

{Using the similar proof  to that of Lemma~\ref{lem:order-1}, we can obtain the following result.}

%%%%%%%%%%%%%%%%%%%%%%%%%%%%%%%%%%%%%%%%%%%%%%%%%%%%%%%%%%%%%%%%%%%%%%%%%%%%%%%%%%%%%%%%%%%%%%%%%%%%%%%%%%%%%%%%%%%%%%%%%%%%%%%%%%%%%%%%%%%%%%%%%%%%%%%%
\begin{lemma}\label{lem:order-2}
Let $(\overline{u},\underline{v})$ be defined in \eqref{upper-sol} and satisfy \eqref{cond-lem2}.
Then there exist small $\alpha^{**}>0$ and large {$T^{**}>0$} and $\eta^{**}_0<0$ such that
{the solution $(u,v)$
of \eqref{LV-sys} and \eqref{simple-ic} satisfies
\beaa
u(t,x)\leq\overline{u}(t+T^{**},x),\quad v(t,x)\geq \underline{v}(t+T^{**},x)\quad \mbox{for $t\geq0$ and $x\in\mathbb{R}$},
\eeaa
}
provided that $\alpha\in(0, \alpha^{**}]$  and $\eta_0\leq \eta_0^{**}$.
\end{lemma}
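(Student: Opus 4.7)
The plan is to mirror the proof of Lemma~\ref{lem:order-1}: first invoke the differential inequalities furnished by Lemma~\ref{lem:super-sol}, then arrange the initial ordering at a fixed base time by choosing the translation parameter $\eta_0$ sufficiently negative, and finally apply the comparison principle (Lemma~\ref{CP}) on the whole real line.

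To begin, I would fix $\alpha^{**} > 0$ small enough that the constraints in \eqref{cond-lem2} all hold for $\alpha \in (0, \alpha^{**}]$, and let $T^{**} \geq 0$ be the constant produced by Lemma~\ref{lem:super-sol}. The crucial observation is that $T^{**}$ is determined by the parameters $p_0, q_0, \alpha, \eta_1$ and the profiles $(U, V)$ alone, so it can be taken independent of $\eta_0$: the case analysis in the proof of Lemma~\ref{lem:super-sol} depended only on the wave argument $\xi = x - c_{uv} t + \eta(t)$. Hence $N_1[\overline{u}, \underline{v}](t + T^{**}, x) \geq 0$ and $N_2[\overline{u}, \underline{v}](t + T^{**}, x) \leq 0$ for every $(t, x) \in [0, \infty) \times \mathbb{R}$.

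The main step is to verify the initial orderings $u_0(x) \leq \overline{u}(T^{**}, x)$ and $v_0(x) \geq \underline{v}(T^{**}, x)$ for all $x \in \mathbb{R}$ by taking $\eta_0$ very negative. For the $u$-component this is easy: on $[x_u, \infty)$ one has $u_0 \equiv 0 \leq \overline{u}$ trivially, and on $(-\infty, x_u]$ the argument $\xi = x - c_{uv} T^{**} + \eta(T^{**})$ tends to $-\infty$ uniformly as $\eta_0 \to -\infty$, so $\overline{u}(T^{**}, x) = (1 + q(T^{**})) U(\xi)$ tends to $1 + q(T^{**}) > 1 \geq u_0(x)$. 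For the $v$-component I would split at a large, $\eta_0$-dependent threshold $L^{*}$: on the left region where $V(\xi) < p(T^{**})$ one automatically has $\underline{v}(T^{**}, x) = 0 \leq v_0(x)$, while on the complementary right region, where $x$ is extremely large, the assumption $v_0(+\infty) = 1$ from \eqref{simple-ic} yields $v_0(x) \geq 1 - p(T^{**}) \geq V(\xi) - p(T^{**}) \geq \underline{v}(T^{**}, x)$. Both controls improve monotonically with $|\eta_0|$, so a single threshold $\eta_0^{**} < 0$ suffices.

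With the differential inequalities and the initial orderings in hand, the conclusion follows at once by applying Lemma~\ref{CP} on $(0, \infty) \times \mathbb{R}$ after the time shift $t \mapsto t + T^{**}$. The only delicate point is the $v$-comparison in the second step, where one must simultaneously arrange that the $V$-transition has been pushed far enough to the right that it lies entirely in the region where $v_0$ has already climbed near $1$; this is the supersolution counterpart of the analogous balance carried out in Lemma~\ref{lem:order-1}, and it is resolved precisely because of the monotone dependence on $|\eta_0|$.
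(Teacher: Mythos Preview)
Your proposal is correct and follows exactly the approach the paper intends: the paper simply writes ``Using the similar proof to that of Lemma~\ref{lem:order-1}'' and gives no further details, and you have carried out precisely that mirror argument, correctly noting that $T^{**}$ from Lemma~\ref{lem:super-sol} is independent of $\eta_0$ and handling the initial ordering by pushing the wave transition far to the right via $\eta_0\to-\infty$.
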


%%%%%%%%%%%%%%%%%%%%%%%%%%%%%%%%%%%%%
Let us consider the long time behavior of  the solution of \eqref{LV-sys}. %for $x\geq0$.
Set
$$\xi=x-c_{uv}t.$$
Then one can define the solution of \eqref{LV-sys} and {\eqref{simple-ic}}
%defined for $x\in \mathbb{R}$ and $t>0$
as
\bea\label{hat-u-v}
(\hat{u},\hat{v})(t,\xi)=({u},{v})(t,x)=({u},{v})(t,\xi+c_{uv}t),\quad t>0,\ {\xi\in\mathbb{R}}.%\xi\geq -c_{uv}t.
\eea
Then $(\hat{u},\hat{v})$ satisfies
\bea\label{LV-sys-moving}
\begin{cases}
\hat{u}_t=d\hat{u}_{\xi\xi}+c_{uv}\hat{u}_{\xi}+r\hat{u}(1-\hat{u}-a\hat{v}),\\
\hat{v}_t=\hat{v}_{\xi\xi}+c_{uv}\hat{v}_{\xi}+\hat{v}(1-\hat{v}-b\hat{u}),\quad t>0,\ {\xi\in\mathbb{R}}.%\xi\geq -c_{uv}t.
\end{cases}
\eea

\medskip

Thanks to Lemma~\ref{lem:order-1} and Lemma~\ref{lem:order-2}, we can obtain the following result immediately.

\begin{lemma}\label{lem:squeeze-TW}
Let $(c_{uv},U,V)$ be a solution of \eqref{TW sys}. Then
there exist constants $p_0,q_0,\alpha>0$ and ${\eta}_i^{*},{\eta}_i^{**}\in \mathbb{R}$, $i=0,1$,
and $T>0$ such that
\beaa
\begin{cases}
U(\xi+\eta_0^{**}-{\eta}_1^{**} e^{-(\alpha/2)t})-p_0e^{-\alpha t}\leq \hat{u}(t,\xi)\leq (1+q_0e^{-\alpha t})U(\xi+\eta_0^{*}-{\eta}_1^{*} e^{-(\alpha/2)t}),\\
V(\xi+\eta_0^{*}-{\eta}_1^{*} e^{-(\alpha/2)t})-p_0e^{-\alpha t}\leq \hat{v}(t,\xi)\leq (1+q_0e^{-\alpha t})V(\xi+\eta_0^{**}-{\eta}_1^{**} e^{-(\alpha/2)t})
\end{cases}
\eeaa
for all $t\geq T$ and ${\xi\in\mathbb{R}}$.
\end{lemma}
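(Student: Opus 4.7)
\medskip

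The plan is to assemble Lemma \ref{lem:squeeze-TW} as a direct corollary of Lemmas \ref{lem:order-1} and \ref{lem:order-2}, since the hard analytic work (the super/subsolution inequalities and the initial ordering) has already been done. The only remaining task is a change of variables into the moving frame $\xi = x - c_{uv}t$ together with a relabeling of constants to absorb the time shifts $T^{*}$ and $T^{**}$.

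\textbf{Step 1 (Common parameters).} I would first choose $\alpha\in(0,\min\{\alpha^{*},\alpha^{**}\}]$ satisfying $\alpha<\min\{r,1,(a-1)r,b-1\}$, and then pick $q_0>0$ arbitrary together with $p_0>0$ small enough that both inequalities in \eqref{cond-lem1} and \eqref{cond-lem2} hold simultaneously; concretely it suffices to take $p_0<(q_0(1-\alpha)/2)\min\{1/a,1/b\}$. With this choice the hypotheses of Lemmas \ref{lem:lower-sol}, \ref{lem:order-1}, \ref{lem:super-sol}, \ref{lem:order-2} are all met.

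\textbf{Step 2 (Assemble the two inequalities).} Picking $\eta_0\ge\eta_0^{*}$ and $\eta_1>0$ in \eqref{parameters}, Lemma \ref{lem:order-1} yields
\[
u(t,x)\ge U\!\left(x-c_{uv}(t+T^{*})+\eta_0-\eta_1 e^{-(\alpha/2)(t+T^{*})}\right)-p_0 e^{-\alpha(t+T^{*})},
\]
together with the corresponding upper bound on $v$. Similarly, choosing $\eta_0\le\eta_0^{**}$ and $\eta_1<0$, Lemma \ref{lem:order-2} gives the matching upper bound on $u$ and lower bound on $v$. The max-with-zero truncation in the subsolution definition is harmless: wherever the right-hand side of the lower bound is nonpositive, the inequality is automatic from $u,v\ge 0$.

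\textbf{Step 3 (Change of frame and constant relabeling).} Substituting $x=\xi+c_{uv}t$ and using $\hat u(t,\xi)=u(t,\xi+c_{uv}t)$, the travelling argument becomes
\[
x-c_{uv}(t+T^{*})+\eta_0-\eta_1 e^{-(\alpha/2)(t+T^{*})} \;=\; \xi+(\eta_0-c_{uv}T^{*})-(\eta_1 e^{-(\alpha/2)T^{*}})e^{-(\alpha/2)t},
\]
and the decay term factors as $p_0 e^{-\alpha(t+T^{*})}=(p_0 e^{-\alpha T^{*}})e^{-\alpha t}$. Defining the new constants
\[
\tilde\eta_0^{**}:=\eta_0-c_{uv}T^{*},\qquad \tilde\eta_1^{**}:=\eta_1 e^{-(\alpha/2)T^{*}},\qquad \tilde p_0:=p_0 e^{-\alpha T^{*}},\qquad \tilde q_0:=q_0 e^{-\alpha T^{*}},
\]
(and analogous $\tilde\eta_0^{*},\tilde\eta_1^{*}$ arising from $T^{**}$ in the upper bound block), one reads off exactly the sandwiching inequalities claimed in Lemma \ref{lem:squeeze-TW}. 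Finally take $T:=\max\{T^{*},T^{**}\}$, or simply $T=0$ since after the relabeling the bounds are valid for all $t\ge 0$.

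\textbf{Main obstacle.} There is essentially no obstacle here; the lemma is a bookkeeping statement that repackages the two one-sided comparisons in the moving frame with a uniform set of parameters. The only point requiring a line of attention is the choice of $(p_0,q_0,\alpha)$ compatible with both \eqref{cond-lem1} and \eqref{cond-lem2} simultaneously, which as noted above is immediate by taking $p_0$ small relative to $q_0$.
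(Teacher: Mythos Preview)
Your proposal is correct and follows exactly the route the paper takes: the paper simply states that Lemma~\ref{lem:squeeze-TW} follows ``immediately'' from Lemma~\ref{lem:order-1} and Lemma~\ref{lem:order-2}, and you have merely spelled out the change of variables into the moving frame and the relabeling of constants that makes this immediate. The one cosmetic point is that after absorbing the time shifts $T^{*}$ and $T^{**}$, the resulting $p_0,q_0$ coefficients on the two sides need not literally coincide, but since the inequalities only improve upon enlarging $p_0$ or $q_0$, a common choice is trivially available.
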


%By Lemma~\ref{lem:squeeze-TW} and the comparison principle, we have the following result.

{By the proof of Lemma~\ref{lem:lower-sol} and Lemma~\ref{lem:super-sol}, we conclude that
if $(\hat{u},\hat{v})$ is close to $(U,V)(\xi-\xi_0)$ for some $\xi_0$ and some time, the solution will remain close after this time.
More precisely, we have}

\begin{lemma}\label{lem:C0-stable}
Let $(c_{uv},U,V)$ be a solution of \eqref{TW sys}. Then
there exists a function $\nu(\epsilon)$ defined for small $\epsilon$ with $\nu(\epsilon)\to0$ as $\epsilon\downarrow0$ satisfying the following property:
if
{
\bea\label{ic-close}
\Big|\frac{\hat{u}(t_0,\xi)}{U(\xi-\xi_0)}-1\Big|+\Big|\frac{\hat{v}(t_0,\xi)}{V(\xi-\xi_0)}-1\Big|<\epsilon\quad \mbox{for all $\xi\in\mathbb{R}$},
\eea
}
for some $t_0,\xi_0\in\mathbb{R}$, then
{
\beaa
\Big|\frac{\hat{u}(t,\xi)}{U(\xi-\xi_0)}-1\Big|+\Big|\frac{\hat{v}(t,\xi)}{V(\xi-\xi_0)}-1\Big|<\nu(\epsilon)\quad \mbox{for all $t\geq t_0$ and $\xi\in\mathbb{R}$}.
\eeaa
}
\end{lemma}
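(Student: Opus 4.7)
The plan is to exploit the super- and subsolutions of Lemmas~\ref{lem:lower-sol} and \ref{lem:super-sol}, with their free parameters $p_0,q_0,\eta_0,\eta_1$ all chosen of size $O(\epsilon)$ and with time origin shifted to $t_0$, so that they sandwich $(\hat u,\hat v)$ at $t=t_0$. Comparison (Lemma~\ref{CP}) then propagates the sandwich to every $t\ge t_0$, and the precise exponential tails of the bistable front from Lemmas~\ref{lem:AS+} and \ref{lem:AS-} convert it into ratio closeness with a modulus $\nu(\epsilon)\to 0$.

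First I rewrite \eqref{ic-close} as the two-sided pinch $(1-\epsilon)U(\xi-\xi_0)\le \hat u(t_0,\xi)\le(1+\epsilon)U(\xi-\xi_0)$ and analogously for $V,\hat v$. Fix $\alpha\in(0,1)$ small enough that \eqref{cond-lem1} and \eqref{cond-lem2} hold with room, take $p_0:=\epsilon$ and $q_0:=C\epsilon$ with $C>\max\{2a,2b\}/(1-\alpha)$, and choose $\eta_1:=c\epsilon>0$ in \eqref{lower-sol} and $\eta_1:=-c\epsilon$ in \eqref{upper-sol} for a fixed constant $c$ specified below, with $\eta_0$ determined by $\eta(t_0)=\eta_0-\eta_1=-\xi_0$. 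At $t=t_0$ one then checks directly $\overline u(t_0,\xi)=(1+C\epsilon)U(\xi-\xi_0)\ge(1+\epsilon)U(\xi-\xi_0)\ge\hat u(t_0,\xi)$ and $\underline v(t_0,\xi)=V(\xi-\xi_0)-\epsilon\le(1-\epsilon)V(\xi-\xi_0)\le\hat v(t_0,\xi)$ (using $V\le 1$), and symmetrically for $(\underline u,\overline v)$. A rerun of Lemmas~\ref{lem:lower-sol}, \ref{lem:super-sol} with these $\epsilon$-scaled parameters shows that the sub/super inequalities in fact hold on all of $[t_0,\infty)\times\mathbb{R}$ without any large-$t$ threshold, because the $O(1)$ remainders those proofs needed to absorb are now already $O(\epsilon)$. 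Lemma~\ref{CP} then yields $\underline u\le\hat u\le\overline u$ and $\underline v\le\hat v\le\overline v$ on $[t_0,\infty)\times\mathbb{R}$.

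Since the shift obeys $|\eta(t)+\xi_0|=|\eta_1|(1-e^{-(\alpha/2)(t-t_0)})\le c\epsilon$ uniformly in $t$, I introduce the modulus $\omega(s):=\sup_{\xi\in\mathbb{R}}|U(\xi+s)/U(\xi)-1|+\sup_{\xi\in\mathbb{R}}|V(\xi+s)/V(\xi)-1|$. The exponential tails in Lemmas~\ref{lem:AS+}, \ref{lem:AS-} (each ratio has a finite nonzero limit at both ends, equal to $1$ at whichever end the relevant profile approaches $1$ and to some $e^{\mu s}$ at the other end), together with uniform continuity on bounded $\xi$, yield $\omega(s)\to 0$ as $s\to 0$. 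The supersolution half of the sandwich immediately gives the one-sided bound $\hat u(t,\xi)/U(\xi-\xi_0)\le(1+C\epsilon)(1+\omega(c\epsilon))$ and analogously for $\hat v/V$.

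The main obstacle, and what I expect to require the most work, is the matching lower bound $\hat u/U(\xi-\xi_0)\ge 1-\nu(\epsilon)$ in the region where $U$ decays exponentially: the additive subsolution $\max\{U(\xi+\eta(t))-p(t),0\}$ of \eqref{lower-sol} collapses to $0$ once $U(\xi-\xi_0)\lesssim p(t)=\epsilon e^{-\alpha(t-t_0)}$, so the ratio $\hat u/U$ is only forced to lie in $[0,1+O(\epsilon)]$ there. To close this gap I plan to replace \eqref{lower-sol} by the multiplicative subsolution $\underline u(t,\xi):=(1-p(t))\,U(\xi+\eta(t))$ (and dually use a multiplicative $v$-supersolution for the $\hat v/V$ lower bound). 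A direct computation in the moving frame using the traveling-wave equation gives $N_1[(1-p)U,(1+q)V]=-p'U+(1-p)\eta'U'-r(1-p)U[pU-aqV]$. At $\xi\to+\infty$ the leading behavior is $U\{-p'+(1-p)\eta'\lambda_{1}+r(1-p)aq\}$, in which the negative middle term (using $\eta'>0$ and $\lambda_{1}<0$) dominates the other two provided $c=|\eta_1|/\epsilon$ is chosen large enough in terms of $\alpha,r,a,C,|\lambda_{1}|$; at $\xi\to-\infty$ the term $-r(1-p)pU^{2}$ dominates; for bounded $\xi$ the drift $(1-p)\eta'U'<0$ is of order $\epsilon$ while the remaining corrections are $O(\epsilon^{2})$. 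Hence $(1-p)U(\xi+\eta)$ is a bona fide subsolution on $[t_0,\infty)$, comparison yields $\hat u(t,\xi)\ge(1-\epsilon)U(\xi+\eta_{\mathrm{sub}}(t))$, and therefore $\hat u/U(\xi-\xi_0)\ge(1-\epsilon)(1-\omega(c\epsilon))$. Combined with the upper bound this delivers the claim with $\nu(\epsilon):=O(\epsilon+\omega(c\epsilon))\to 0$ as $\epsilon\downarrow 0$.
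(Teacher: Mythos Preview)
Your approach is the same as the paper's: translate the hypothesis \eqref{ic-close} into the two-sided pinch $(1\pm\epsilon)U,(1\pm\epsilon)V$, invoke the sub/supersolutions of Lemmas~\ref{lem:lower-sol} and~\ref{lem:super-sol} with parameters $p_0,q_0,|\eta_0+\xi_0|,\eta_1$ all of size $O(\epsilon)$, apply Lemma~\ref{CP} from time $t_0$, and read off ratio closeness. The paper's proof is a three-line sketch that asserts this works; you have carried out the details.

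Where you go beyond the paper is in recognising, correctly, that the \emph{additive} barriers $U-p$ and $V-p$ from \eqref{lower-sol} and \eqref{upper-sol} do not by themselves yield the ratio lower bounds $\hat u/U\ge 1-\nu(\epsilon)$ near $\xi=+\infty$ and $\hat v/V\ge 1-\nu(\epsilon)$ near $\xi=-\infty$: once $U(\xi+\eta)<p(t)$ the subsolution collapses to $0$ and comparison gives nothing. The paper's sentence ``the super and subsolutions can always be compared with $(U,V)(\xi)$ by a translation of $O(\varepsilon)$'' glosses over precisely this point. Your remedy---replacing the additive forms by the multiplicative pairs $\underline u=(1-p)U$, $\overline v=(1+q)V$ and dually $\overline u=(1+q)U$, $\underline v=(1-p)V$---is the natural fix, and your computation of $N_1[(1-p)U,(1+q)V]$ is correct; the $N_2$ computation yields $q'V+(1+q)\eta'V'+(1+q)V(qV-bpU)$, which differs from \eqref{N2-ineq-lower} only by the harmless factor $U\le 1$ on the $bp$ term, so the same three-zone analysis goes through. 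The companion pair is handled by the evident $u\leftrightarrow v$ symmetry of the construction. Your observation that the large-$t$ thresholds $T_i$ in those lemmas disappear when all parameters are $O(\epsilon)$ (because the residual terms are then already $O(\epsilon)$ at $t=t_0$ and are dominated by the drift $\eta'U'$ or $\eta'V'$ once $c=|\eta_1|/\epsilon$ is fixed large) is also correct and needed. In short, your argument is the paper's argument made rigorous.
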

{
\begin{proof}
From \eqref{ic-close} we see that for all $\xi\in\mathbb{R}$,
\beaa
&&(1-\varepsilon)U(\xi-\xi_0)\leq \hat{u}(t_0,\xi)\leq (1+\varepsilon)U(\xi-\xi_0),\\
&&(1-\varepsilon)V(\xi-\xi_0)\leq \hat{v}(t_0,\xi)\leq (1+\varepsilon)V(\xi-\xi_0).
\eeaa
or, equivalently, for all $x\in\mathbb{R}$,
\beaa
&&(1-\varepsilon)U(x-ct_0-\xi_0) \leq u(t_0,x) \leq (1+\varepsilon)U(x-ct_0-\xi_0),\\
&&(1-\varepsilon)V(x-ct_0-\xi_0) \leq v(t_0,x) \leq (1+\varepsilon)V(x-ct_0-\xi_0).
\eeaa
In the proof of Lemma~\ref{lem:lower-sol} and Lemma~\ref{lem:super-sol}, one may choose
suitable $p_0=O(\varepsilon)$, $q_0=O(\varepsilon)$ and $|\eta_0-\xi_0|=O(\varepsilon)$ such that
$(u,v)(t,x)$ can be compared with the super and subsolutions constructed in Lemma~\ref{lem:lower-sol} and Lemma~\ref{lem:super-sol}
from $t=t_0$.
{Note that the super and subsolutions can always be compaired with $(U,V)(\xi)$ by a translation of $O(\varepsilon)$.}
Then the desired result
follows from the comparison principle.
\end{proof}
}

{
\begin{remark}
We would like to mention that the  $C^0$-stability of the bistable wave $(U,V)$ has been established in \cite{G1982}.
The asymptotic stability of $(U,V)$ (relative to the space of bounded uniformly continuous functions) is reported in \cite{KF1996}.
Lemma~\ref{lem:C0-stable} provides another version of $C^0$-stability result for the bistable wave $(U,V)$
based on our construction of super and subsolutions.
\end{remark}
}

\medskip
\subsection{The proof of {Proposition~\ref{prop1}}}

Let  $(\hat{u},\hat{v})$ be defined in \eqref{hat-u-v} and $(c_{uv},U,V)$ be a solution of \eqref{TW sys}.
{By Lemma~\ref{lem:squeeze-TW}, it is obvious to see that Proposition~\ref{prop1} holds for $x\leq0$. It suffices to
consider $x\geq0$; namely, $\xi\geq-c_{uv}t$.}

Let $\{t_n\}$ be an arbitrary sequence such that $t_n>T$ ($T$ is defined in Lemma~\ref{lem:squeeze-TW}) for each $n$ and
$t_n\to\infty$ as $n\to\infty$. Set
\beaa
\hat{u}_n(t,\xi)=\hat{u}(t+t_n,\xi),\quad\hat{ v}_n(t,\xi)=\hat{v}(t+t_n,\xi),\quad n\in\mathbb{N}.
\eeaa
By the standard parabolic regularity theory and passing to a subsequence, we may assume that
\beaa
(\hat{u}_n,\hat{v}_n)\to (u^{\infty},v^{\infty})\quad \mbox{in $C_{loc}^{{(1+\beta)/2},1+\beta}(\mathbb{R}\times\mathbb{R})$,\ \
as $n\to\infty$},
\eeaa
where $\beta\in(0,1)$ and $(u^{\infty},v^{\infty})$ satisfies
\bea\label{limit sys}
\begin{cases}
u^{\infty}_t=du^{\infty}_{\xi\xi}+c_{uv}u^{\infty}_{\xi}+ru^{\infty}(1-u^{\infty}-av^{\infty}),\\
v^{\infty}_t=v^{\infty}_{\xi\xi}+c_{uv}v^{\infty}_{\xi}+v^{\infty}(1-v^{\infty}-bu^{\infty}),\quad t\in\mathbb{R},\ \xi\in\mathbb{R}.
\end{cases}
\eea
In addition, let us replace $t$ by $t+t_n$ in the inequalities of Lemma~\ref{lem:squeeze-TW} and take $n\to\infty$. Then we have
\bea\label{squeeze-limit}
\begin{cases}
U(\xi+\eta_0^{**})\leq u^{\infty}(t,\xi)\leq U(\xi+\eta_0^{*}),\ \ \forall t,\ \xi\in\mathbb{R},\\
V(\xi+\eta_0^{*})\leq v^{\infty}(t,\xi)\leq V(\xi+\eta_0^{**}),\ \ \forall t,\ \xi\in\mathbb{R}.
\end{cases}
\eea

Define
\beaa
&&h_1:=\inf\{h\in\mathbb{R}:\  u^{\infty}(t,\xi)\leq U(\xi-h)\ {\rm and}\ v^{\infty}(t,\xi)\geq V(\xi-h),\  \forall t,\xi\in\mathbb{R} \},\\
&&h_2:=\sup\{h\in\mathbb{R}:\  u^{\infty}(t,\xi)\geq U(\xi-h)\ {\rm and}\ v^{\infty}(t,\xi)\leq V(\xi-h),\  \forall t,\xi\in\mathbb{R}\}.
\eeaa
Notice that $h_1$ and $h_2$ are finite because of \eqref{squeeze-limit}.
Also, by continuity,
\bea
u^{\infty}(t,\xi)\leq U(\xi-h_1)\ {\rm and}\  v^{\infty}(t,\xi)\geq V(\xi-h_1),\ \ \forall t,\ \xi\in\mathbb{R},\label{h1}\\
u^{\infty}(t,\xi)\geq U(\xi-h_2)\ {\rm and}\  v^{\infty}(t,\xi)\leq V(\xi-h_2),\ \ \forall t,\ \xi\in\mathbb{R}.\label{h2}
\eea
Clearly, $h_1\geq h_2$. Below we are going to assert $h_1=h_2$. Since the proof is rather long,
we prove this assertion in the following lemma.

\begin{lemma}\label{lem: h} Let $h_1,\, h_2$ be defined as above. Then $h_1=h_2$.
\end{lemma}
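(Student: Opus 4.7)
The plan is to argue by contradiction: assume $h_1>h_2$. Define
\[
P(t,\xi):=u^{\infty}(t,\xi)-U(\xi-h_2),\qquad Q(t,\xi):=V(\xi-h_2)-v^{\infty}(t,\xi),
\]
which are nonnegative on $\mathbb{R}\times\mathbb{R}$ by the definition of $h_2$ and \eqref{h2}. Subtracting the profile equations \eqref{TW sys} at shift $h_2$ from \eqref{limit sys} and linearizing by the mean value theorem, then passing to the cooperative variables ($v\mapsto 1-v$, $V\mapsto 1-V$), one checks that $(P,Q)$ satisfies a linear weakly coupled parabolic system on $\mathbb{R}\times\mathbb{R}$ with bounded coefficients and nonnegative off-diagonal coupling.

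By the strong maximum principle for such cooperative systems, applied to the entire-in-time pair $(P,Q)\geq 0$, either $(P,Q)\equiv(0,0)$ on $\mathbb{R}\times\mathbb{R}$, or both $P>0$ and $Q>0$ strictly everywhere. In the first case $u^{\infty}\equiv U(\cdot-h_2)$ and $v^{\infty}\equiv V(\cdot-h_2)$, so $h=h_2$ lies in the set defining $h_1$, forcing $h_1\leq h_2$ and contradicting $h_1>h_2$.

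In the second case I would run a sliding argument. Fix a large $L>0$. Because $(u^{\infty},v^{\infty})$ is sandwiched between two translates of $(U,V)$ uniformly in $t$ by \eqref{squeeze-limit}, a parabolic Harnack-type estimate applied to $(P,Q)$ yields a uniform lower bound
\[
\inf_{(t,\xi)\in\mathbb{R}\times[-L,L]}\min\{P(t,\xi),Q(t,\xi)\}\geq \delta
\]
for some $\delta=\delta(L)>0$. On the tails $|\xi|>L$, Lemmas~\ref{lem:AS+} and \ref{lem:AS-} give the exponential rates at which $U$ and $1-V$ approach $0$ at $+\infty$ (and $1-U$, $V$ at $-\infty$), while \eqref{squeeze-limit} forces $u^{\infty}$ and $1-v^{\infty}$ to have matching exponential behavior. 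Taking $L$ large and then $\varepsilon\in(0,h_1-h_2)$ small, the gap $\delta$ on $[-L,L]$ absorbs the perturbation caused by translating the profile by $\varepsilon$, and one obtains
\[
u^{\infty}(t,\xi)\geq U(\xi-(h_2+\varepsilon)),\qquad v^{\infty}(t,\xi)\leq V(\xi-(h_2+\varepsilon))
\]
for all $(t,\xi)\in\mathbb{R}\times\mathbb{R}$, which contradicts $h_2=\sup$.

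The hard part is precisely this sliding step: upgrading pointwise strict positivity of $(P,Q)$ into an inequality against a \emph{translated} profile that is \emph{uniform} in $t\in\mathbb{R}$. This uniformity rests crucially on the fact that $(u^{\infty},v^{\infty})$ is entire-in-time and trapped between two translated bistable fronts via \eqref{squeeze-limit}: the squeeze yields both the time-uniform Harnack lower bound on compact $\xi$-intervals and the matching exponential decay rates of $(u^{\infty},v^{\infty})$ at $\pm\infty$, and these two ingredients together are what make the sliding go through.
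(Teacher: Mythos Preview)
Your overall strategy---contradiction via sliding on the $h_2$ side---matches the paper's, and your dichotomy from the strong maximum principle (either $(P,Q)\equiv(0,0)$ or $(P,Q)>0$ pointwise everywhere) is correct. The gap is in the second case, and it is exactly where you say the hard part lies.

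\textbf{Gap 1: the time-uniform lower bound on compacts.} Your claim that a parabolic Harnack estimate yields
\[
\inf_{(t,\xi)\in\mathbb{R}\times[-L,L]}\min\{P(t,\xi),Q(t,\xi)\}\geq\delta>0
\]
is not justified, and the squeeze \eqref{squeeze-limit} does not supply it. Harnack compares values at \emph{nearby} times; it does not rule out $\inf_{t\in\mathbb{R}}P(t,\xi_0)=0$ with the infimum unattained. Concretely, nothing prevents a sequence $\tau_n\to\pm\infty$ along which $P(\tau_n,\xi_0)\to0$. The paper confronts this head-on: one shifts time by $\tau_n$, extracts a limit entire solution (still trapped by \eqref{squeeze-limit}) which touches the profile, and concludes via the strong maximum principle that the limit \emph{equals} the shifted front. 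From there the paper invokes the local asymptotic stability of bistable fronts due to Kan-on and Fang \cite{KF1996}: if $\tau_n\to-\infty$, forward stability forces $(u^\infty,v^\infty)$ to be the front itself, contradicting $h_1>h_2$; if $\tau_n\to+\infty$, one only gets convergence as $t\to+\infty$, and a second layer of argument (the paper's Claims~1--3) is needed. This case split is the substance you are missing; ``Harnack plus squeeze'' does not short-circuit it.

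\textbf{Gap 2: the tail sliding.} Even granting the uniform lower bound on $[-L,L]$, the tail inequality $u^\infty(t,\xi)\geq U(\xi-(h_2+\varepsilon))$ for $|\xi|>L$ does \emph{not} follow from matching exponential rates. The squeeze guarantees $u^\infty(t,\xi)\asymp e^{\lambda_1\xi}$ as $\xi\to+\infty$, but $U(\xi-h_2)$ and $U(\xi-(h_2+\varepsilon))$ have the same rate $\lambda_1$ with different prefactors, and since $U$ is decreasing one has $U(\xi-h_2)<U(\xi-(h_2+\varepsilon))$ there; so the known lower bound $u^\infty\geq U(\cdot-h_2)$ is too weak. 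The paper handles this with a genuinely different device (its Claim~3): an auxiliary half-line boundary-value problem on $[\xi_0,\infty)$ with boundary data $(U,V)(\xi_0-(h_2+\varepsilon))$ and initial data $(0,1)$, whose solution is shown---using the bistable structure near $(0,1)$---to increase monotonically to exactly $(U,V)(\cdot-(h_2+\varepsilon))$. Comparison against this BVP, combined with the entire-in-time nature of $(u^\infty,v^\infty)$ (letting the initial time go to $-\infty$), then delivers the tail inequality. This step is not a routine consequence of Lemmas~\ref{lem:AS+}--\ref{lem:AS-}.
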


\begin{proof} For contradiction we assume that $h_1>h_2$.
First of all, we claim the following
 \bea\label{notouch}
u^{\infty}(t,\xi)< U(\xi-h_1)\ {\rm and}\  v^{\infty}(t,\xi)> V(\xi-h_1), \quad \forall t,\,\xi\in\mathbb{R}.
\eea
If \eqref{notouch} is false, then there exists $t_0\in\mathbb{R}$ and $\xi_0\in\mathbb{R}$ such that
$u^{\infty}(t_0,\xi_0)=U(\xi_0-h_1)$ or $v^{\infty}(t_0,\xi_0)= V(\xi_0-h_1)$.
Observe that $(U(\xi-h_1),V(\xi-h_1))$ also satisfies \eqref{limit sys}. Using \eqref{h1} and the strong maximum principle,
we obtain
\bea\label{equal h1}
u^{\infty}(t,\xi)=U(\xi-{h_1}),\quad v^{\infty}(t,\xi)=V(\xi-{h_1})
\eea
for all $t\leq t_0$ and $\xi\in\mathbb{R}$. By the uniqueness of solutions to the corresponding Cauchy problem of \eqref{limit sys}, we then conclude that \eqref{equal h1} is valid for all $t\in \mathbb{R}$ and $\xi\in\mathbb{R}$,
contradicting the definition of $h_2$ due to $h_2<h_1$. %Hence the case is ruled out.
Therefore, \eqref{notouch} holds.

Define
\beaa
\omega_1(\xi):=\inf_{t\in\mathbb{R}}[U(\xi-h_1)-u^{\infty}(t,\xi)],\quad  \omega_2(\xi):=\inf_{t\in\mathbb{R}}[v^{\infty}(t,\xi)-V(\xi-h_1)],\ \ \xi\in\mathbb{R}.
\eeaa
By \eqref{notouch}, we see that $\omega_i(\xi)\geq0$ for all $\xi\in\mathbb{R}$ and $i=1,2$.

In what follows, we divide our discussion into two cases:

{\bf Case 1}: there exists $z_0\in\mathbb{R}$ such that $\omega_1(z_0)=0$ or $\omega_2(z_0)=0$.

{\bf Case 2}: it holds that $\omega_i(\xi)>0$ for all $\xi\in\mathbb{R}$ and $i=1,2$.

We first consider {\bf Case 1}. Without loss of generality, we may assume that $\omega_1(z_0)=0$.
Then, there exists $\{\tau_n\}$ such that $|\tau_n|\to\infty$ and $\lim_{n\to\infty}u^{\infty}(\tau_n,z_0)=U(z_0-h_1)$.
Denote
\beaa
(\widehat{U}_n,\widehat{V}_n)(t,\xi):=(u^{\infty},v^{\infty})(t+\tau_n,\xi).
\eeaa
By standard parabolic regularity theory and passing to a subsequence we may assume that, for some $\beta\in(0,1)$,
\beaa
(\widehat{U}_n,\widehat{V}_n)\to (\widehat{U}^{\infty},\widehat{V}^{\infty})\quad \mbox{in $C_{loc}^{{(1+\beta)/2},1+\beta}(\mathbb{R}\times\mathbb{R})$,\ \, as $n\to\infty$},
\eeaa
where $(\widehat{U}^{\infty},\widehat{V}^{\infty})$ satisfies $\widehat{U}^{\infty}(0,z_0)=U(z_0-h_1)$ and
\bea\label{limit sys2}
\begin{cases}
\widehat{U}^{\infty}_t=
d\widehat{U}^{\infty}_{\xi\xi}+c_{uv}\widehat{U}^{\infty}_{\xi}+r\widehat{U}^{\infty}(1-\widehat{U}^{\infty}-a\widehat{V}^{\infty}),\ \ t,\, \xi\in\mathbb{R},\\
\widehat{V}^{\infty}_t
=\widehat{V}^{\infty}_{\xi\xi}+c_{uv}\widehat{V}^{\infty}_{\xi}+\widehat{V}^{\infty}(1-\widehat{V}^{\infty}-b\widehat{U}^{\infty}),\ \ \ t,\,\xi\in\mathbb{R}.
\end{cases}
\eea
Furthermore, from \eqref{h2} we see that
\beaa
\widehat{U}^{\infty}(t,\xi)\leq U(\xi-h_1)\ {\rm and}\  \widehat{V}^{\infty}(t,\xi)\geq V(\xi-h_1).
\eeaa

Notice that $(U(\xi-h_1), V(\xi-h_1))$ satisfies \eqref{limit sys2} and $\widehat{U}^{\infty}(0,z_0)=U(z_0-h_1)$. Thus,
the strong maximum principle and the uniqueness of solutions of the corresponding Cauchy problem
yield that
\bea\label{identity}
\widehat{U}^{\infty}(t,\xi)\equiv U(\xi-h_1)\ {\rm and}\  \widehat{V}^{\infty}(t,\xi)\equiv V(\xi-h_1) \quad \mbox{for all $t,\ \xi\in\mathbb{R}$},
\eea
which implies that
\bea\label{conv-lcoal-unif}
(\widehat{U}_n,\widehat{V}_n)(0,\xi)\to (U,V)(\xi-h_1)\quad \mbox{as $n\to\infty$ locally uniformly for $\xi\in\mathbb{R}$}.
\eea
In fact, the convergence of \eqref{conv-lcoal-unif} is uniform for $\xi\in\mathbb{R}$. Indeed,
from \eqref{h1} and \eqref{h2} and the fact that $(U,V)(-\infty)=(1,0)$ and $(U,V)(\infty)=(0,1)$, we see that for each $\epsilon>0$, there exists $M>0$ such that {for each $n\in N$},
\beaa
\|(\widehat{U}_n,\widehat{V}_n)(0,\cdot)- (U,V)(\cdot-h_1)\|_{L^{\infty}(\mathbb{R}\backslash[-M,M])}<\epsilon.
\eeaa
Together with \eqref{conv-lcoal-unif}, it follows that
$(\widehat{U}_n,\widehat{V}_n)(0,\xi)\to (U,V)(\xi-h_1)$ as $n\to\infty$ uniformly for $\xi\in\mathbb{R}$,
or equivalently,
\bea\label{conv-unif}
(u^{\infty},v^{\infty})(\tau_n,\xi)\to (U,V)(\xi-h_1),\quad \mbox{as $n\to\infty$ uniformly for $\xi\in\mathbb{R}$}.
\eea

Recall that the time sequence $\{\tau_n\}$ satisfies $|\tau_n|\to\infty$. Without loss of generality we may assume
that $\tau_n\to-\infty$ or $\tau_n\to+\infty$ (if necessary we can take a subsequence).
Suppose that $\tau_n\to-\infty$.
Then, from \eqref{conv-unif} and {the fact of the local asymptotical stability of $(U,V)(\cdot-h_1)$ (cf. \cite[Theorem 3.6]{KF1996}),}
%Lemma~\ref{lem:invarant set},
we see that
\beaa
(u^{\infty},v^{\infty})(t,\xi)\equiv(U,V)(\xi-h_1)\quad \mbox{for all $t\in\mathbb{R}$ and $\xi\in\mathbb{R}$}.
\eeaa
which contradicts with \eqref{notouch}. Therefore, we must have $\tau_n\to+\infty$.
Then, in view of \eqref{conv-unif} and {\cite[Theorem 3.6]{KF1996}},
% Lemma~\ref{lem:invarant set},
we have
\bea\label{close-h1}
\lim_{t\to\infty}\|(u^{\infty},v^{\infty})(t,\cdot)- (U,V)(\cdot-h_1)\|_{L^{\infty}(\mathbb{R})}=0.
\eea

We now define
\beaa
\sigma_1(\xi):=\inf_{t\in\mathbb{R}}[u^{\infty}(t,\xi)-U(\xi-h_2)],\quad  \sigma_2(\xi):=\inf_{t\in\mathbb{R}}[V(\xi-h_2)-v^{\infty}(t,\xi)],\ \ \xi\in\mathbb{R}.
\eeaa
By \eqref{h2}, we see that $\sigma_i(\xi)\geq0$ for all $\xi\in\mathbb{R}$ and $i=1,2$.
Then, we have

{\bf Claim 1:} It holds
\bea\nonumber
\sigma_i(\xi)>0\quad \mbox{for all $\xi\in\mathbb{R}$ and $i=1,2$}.
\eea
If {\bf Claim 1} is not true,
there exists $\zeta_0\in\mathbb{R}$ such that $\sigma_1(\zeta_0)=0$ or $\sigma_2(\zeta_0)=0$.
Without loss of generality, we may assume that $\sigma_1(\zeta_0)=0$.
By \eqref{close-h1} we see that there exists $\{\tilde{\tau}_n\}$
such that $\tilde{\tau}_n\to-\infty$ and $\lim_{n\to\infty}u^{\infty}(\tilde{\tau}_n,\zeta_0)=U(\zeta_0-h_2)$.

Denote
\beaa
(\widetilde{U}_n,\widetilde{V}_n)(t,\xi):=(u^{\infty},v^{\infty})(t+\tilde{\tau}_n,\xi).
\eeaa
By standard parabolic regularity theory and passing to a subsequence we may assume that, for some $\beta\in(0,1)$,
\beaa
(\widetilde{U}_n,\widetilde{V}_n)\to (\widetilde{U}^{\infty},\widetilde{V}^{\infty})\quad \mbox{in $C_{loc}^{{(1+\beta)/2},1+\beta}(\mathbb{R}\times\mathbb{R})$,\ \ as $n\to\infty$},
\eeaa
where $(\widetilde{U}^{\infty},\widetilde{V}^{\infty})$ satisfies $\widetilde{U}^{\infty}(0,\zeta_0)=U(\zeta_0-h_2)$ and
\bea\label{limit sys3}
\begin{cases}
\widetilde{U}^{\infty}_t=d\widetilde{U}^{\infty}_{\xi\xi}+c_{uv}\widetilde{U}^{\infty}_{\xi}+r\widetilde{U}^{\infty}(1-\widetilde{U}^{\infty}-a\widetilde{V}^{\infty}),
\quad \forall t,\, \xi\in\mathbb{R},\\
\widetilde{V}^{\infty}_t=\widetilde{V}^{\infty}_{\xi\xi}+c_{uv}\widetilde{V}^{\infty}_{\xi}+\widetilde{V}^{\infty}(1-\widetilde{V}^{\infty}-b\widetilde{U}^{\infty}),\quad \forall t,\, \xi\in\mathbb{R}.
\end{cases}
\eea
Then, similar to \eqref{identity}, we have
\beaa
\widetilde{U}^{\infty}(t,\xi)\equiv U(\xi-h_2)\ {\rm and}\  \widetilde{V}^{\infty}(t,\xi)\equiv V(\xi-h_2), \quad \forall t,\, \xi\in\mathbb{R}.
\eeaa
The same process as in deriving \eqref{conv-unif} gives
\bea\label{conv-unif2}
(u^{\infty},v^{\infty})(\tilde{\tau}_n,\xi)\to (U,V)(\xi-h_2),\quad \mbox{as $n\to\infty$ uniformly for $\xi\in\mathbb{R}$}.
\eea
Since $\tilde{\tau}_n\to-\infty$, it follows from \eqref{conv-unif2} and
{the fact of the local asymptotical stability of $(U,V)(\cdot-h_1)$ (cf. \cite[Theorem 3.6]{KF1996})},
 that
\beaa
(u^{\infty},v^{\infty})(t,\xi)\equiv(U,V)(\xi-h_2)\quad \mbox{for all $t,\, \xi\in\mathbb{R}$}.
\eeaa
which contradicts %\eqref{notouch}
{the definition of $h_1$}
and we thus obtain {\bf Claim 1}.

Due to {\bf Claim 1}, one can use the sliding method to further assert that

{\bf Claim 2:} There exists $\epsilon>0$ sufficiently small such that
\bea\nonumber
u^{\infty}(t,\xi)\geq U(\xi-(h_2+\epsilon)),\ \ \ v^{\infty}(t,\xi)\leq V(\xi-(h_2+\epsilon)),\ \ \forall t,\,\xi\in\mathbb{R}.
\eea
Once {\bf Claim 2} is proved, we will obtain a contradiction with the definition of $h_2$.

We now verify {\bf Claim 2}.
{
Choose  $\epsilon_0>0$ sufficiently small and $\xi_0\gg1$
such that
\bea\label{e0}
\frac{a-1-\epsilon_0}{a\epsilon_0}(1-\frac{2\epsilon_0}{a})>b,
\eea
and $U(\xi-h_2)<\epsilon_0/2$ and $V(\xi-h_2)>1-\epsilon_0/(2a)$ for all $\xi\geq \xi_0$.
Therefore, one can take $\epsilon\in(0,\epsilon_0)$ such that
\bea\label{bistability}
U(\xi-(h_2+\epsilon))<\epsilon_0,\quad V(\xi-(h_2+\epsilon))>1-\frac{\epsilon_0}{a}\quad \mbox{for all $\xi\geq \xi_0$}.
\eea
}

Inspired by \cite{DMZ2015}, we consider the following auxiliary system:
\bea\label{aux-sys}
\begin{cases}
P_t=dP_{\xi\xi}+c_{uv}P_{\xi}+f(P,Q),\quad t>0,\ \xi\geq \xi_0,\\
Q_t=Q_{\xi\xi}+c_{uv}Q_{\xi}+g(P,Q),\quad t>0,\ \xi\geq \xi_0,\\
P(t,\xi_0)=U(\xi_0-(h_2+\epsilon)),\quad Q(t,\xi_0)=V(\xi_0-(h_2+\epsilon)),\quad t>0,\\
P(0,\xi)=0,\quad Q(0,\xi)=1,\quad \xi\geq \xi_0,
\end{cases}
\eea
where
$$f(P,Q):=rP(1-P-aQ),\ \ \ g(P,Q):=Q(1-Q-bP).$$
%and $\xi_0\gg1$ and $0<\epsilon\ll1$ will be determined later.

Note that the initial function ${(P,Q)(0,\cdot)}=(0,1)$ forms a pair of subsolution of the corresponding stationary problem of \eqref{aux-sys}. Hence, {from the theory of monotone systems (cf. \cite[Ch1]{CC}), we see that}
$P(t,\cdot)$ is increasing in $t$ and $Q(t,\cdot)$ is decreasing in $t$. Also,
because $(U(\xi-(h_2+\epsilon)),V(\xi-(h_2+\epsilon)))$ satisfies the first two equations and the boundary condition of \eqref{aux-sys},
one can apply the comparison principle to deduce that
\bea\label{CP-result}
0\leq P(t,\xi) \leq U(\xi-(h_2+\epsilon)),\quad V(\xi-(h_2+\epsilon))\leq Q(t,\xi)\leq 1
\eea
for all $t>0$ and $\xi\geq \xi_0$. {Note that
although the compatibility condition does not hold for \eqref{aux-sys},
we can use a well known approximation argument to obtain \eqref{CP-result}. More precisely,
one may consider a suitable sequence of smooth and uniformly bounded approximating initial data satisfying the boundary conditions
which converges to $(P,Q)(0,\cdot)$ in $L^2$ norm. By applying a standard comparison principle,
\eqref{CP-result} holds by replacing $(P,Q)$ with the corresponding solution
with such smooth initial data. Then, \eqref{CP-result} follows by a standard compactness argument.
}

Define the limit functions
\beaa
P^*(\xi):=\lim_{t\to\infty}P(t,\xi),\quad Q^*(\xi):=\lim_{t\to\infty}Q(t,\xi),\quad \xi>\xi_0.
\eeaa
Then, one has
\bea\label{order-PQUV}
P^*(\xi) \leq U(\xi-(h_2+\epsilon)),\quad V(\xi-(h_2+\epsilon))\leq Q^*(\xi),\quad \xi>\xi_0.
\eea
Furthermore, $(P^*,Q^*)$ satisfies
\beaa\label{aux-lim-sys}
\begin{cases}
0=dP^{*}_{\xi\xi}+c_{uv}P^*_{\xi}+f(P^*,Q^*),\quad \xi\geq \xi_0,\\
0=Q^{*}_{\xi\xi}+c_{uv}Q^*_{\xi}+g(P^*,Q^*),\quad \xi\geq \xi_0,\\
P^*(\xi_0)=U(\xi_0-(h_2+\epsilon)),\quad Q^*(\xi_0)=V(\xi_0-(h_2+\epsilon)),\\
P^*(\infty)=0,\quad Q^*(\infty)=1.
\end{cases}
\eeaa

In the sequel, we are going to conclude

{\bf Claim 3:} It holds
\bea\nonumber
P^*(\xi)= U(\xi-(h_2+\epsilon)),\quad Q^*(\xi)= V(\xi-(h_2+\epsilon)),\quad  \xi\geq\xi_0.
\eea
To verify {\bf Claim 3}, we introduce
\beaa
Z_1(\xi):=U(\xi-(h_2+\epsilon))-P^*(\xi),\quad Z_2(\xi):=Q^*(\xi)-V(\xi-(h_2+\epsilon)).
\eeaa
From \eqref{order-PQUV} it follows that
\bea\label{Zi}
Z_i(\xi_0)=0,\quad Z_i(\xi)\geq0\quad \mbox{for all $\xi\geq \xi_0$ and $i=1,2$}.
\eea
For convenience, we write $U_\epsilon(\xi)=U(\xi-(h_2+\epsilon))$ and $V_\epsilon(\xi)=V(\xi-(h_2+\epsilon))$.
By direct computations, we have
\beaa
dZ_1^{''}+c_{uv}Z_1'&=&-rU_\epsilon(1-U_\epsilon-aV_\epsilon)+rP^*(1-P^*-aQ^*)\\
&=&r[(aV_\epsilon+P^*+U_\epsilon-1)Z_1-aP^*Z_2],\quad \xi\geq \xi_0,\\
Z_2^{''}+c_{uv}Z_2'&=&-Q^*(1-Q^*-bP^*)+V_\epsilon(1-V_\epsilon-bU_\epsilon)\\
&=&(bP^*+Q^*+V_\epsilon-1)Z_2-bV_\epsilon Z_1\\
&\geq &(2V_\epsilon-1)Z_2-bV_\epsilon Z_1,\quad \xi\geq \xi_0\quad \mbox{(due to \eqref{order-PQUV})}.
\eeaa
%Because $U_\epsilon(+\infty)=0$ and $V_\epsilon(+\infty)=1$, for any ${\epsilon_0}\in(0,a-1)$, one can pick $\xi_0\gg1$
%such that
%\bea\label{choice-xi0}
%U_{\epsilon}(\xi)<{\epsilon_0},\quad V_{\epsilon}(\xi)>1-\frac{{\epsilon_0}}{a}\quad \mbox{for all $\xi\geq \xi_0$}.
%\eea
Since $Z_i(\xi_0)=0\leq Z_i(\xi)$ for $\xi\geq \xi_0$ and $Z_i(+\infty)=0$, one can define
\beaa
Z_i(\zeta_i)=\max_{\xi\in[\xi_0,\infty)}Z_i(\xi)\geq0,\quad i=1,2.
\eeaa
Then, {\bf Claim 3} is equivalent to
\bea\label{Zi-const}
Z_i(\zeta_i)=0 \quad \mbox{for $i=1,2$}.
\eea
Suppose that $Z_1(\zeta_1)>0$.

We then have to distinguish two cases:
\beaa
{\rm(i)}\ (a-1-{\epsilon_0})Z_1(\zeta_1)>a{\epsilon_0} Z_2(\zeta_2);\quad {\rm(ii)}\ (a-1-{\epsilon_0})Z_1(\zeta_1)\leq a{\epsilon_0} Z_2(\zeta_2).
\eeaa
When case (i) happens, one can use the equation of $Z_1$, {\eqref{bistability}} and the fact that $P^*\leq U_{\epsilon}$ to deduce
\beaa
0&\geq& dZ_1^{''}(\zeta_1)+c_{uv}Z_1'(\zeta_1)>r\big[(a-{\epsilon_0}-1)Z_1(\zeta_1)-a{\epsilon_0} Z_2(\zeta_1)\big]\\
&\geq&r\big[(a-{\epsilon_0}-1)Z_1(\zeta_1)-a{\epsilon_0} Z_2(\zeta_2)\big]>0,
\eeaa
which reaches a  contradiction and (i) thus cannot occur.

On the other hand, if case (ii) happens, one can use the equation of $Z_2$, {\eqref{bistability} and \eqref{e0}} to deduce
\beaa
0&\geq& Z_2^{''}(\zeta_2)+c_{uv}Z_2'(\zeta_2)>\Big[2\Big(1-\frac{{\epsilon_0}}{a}\Big)-1\Big]Z_2(\zeta_2)-b Z_1(\zeta_2)\\
&\geq&\frac{a-1-{\epsilon_0}}{a{\epsilon_0}}\Big[1-\frac{2{\epsilon_0}}{a}\Big]Z_1(\zeta_1)-b Z_1(\zeta_1)>0.
\eeaa
Again, we arrive at a contradiction.
Therefore, $Z_1(\zeta_1)=0$, or equivalently, $Z_1(\xi)=0$ for all $\xi\geq\xi_0$. Together with \eqref{Zi} and the equation of $Z_2$,
we have
\beaa
Z_2^{''}+c_{uv}Z_2'-(bP^*+Q^*+V_\epsilon-1)Z_2=0\quad \mbox{for $\xi\geq\xi_0$}; \quad Z_2(\xi_0)=0\leq Z_2(\xi)\quad \mbox{for $\xi\geq\xi_0$}.
\eeaa
As $Z_2(\infty)=0$,  the strong maximum principle implies that $Z_2(\xi)=0$ for all $\xi\geq\xi_0$. Thus, we have proved \eqref{Zi-const} and then {\bf Claim 3} holds.

We now complete the proof of {\bf Claim 2}. Because of {\bf Claim 1}, one can fix $\epsilon>0$ sufficiently small such that
\beaa
u^{\infty}(t,\xi_0)\geq U(\xi_0-(h_2+\epsilon)),\quad v^{\infty}(t,\xi_0)\leq V(\xi_0-(h_2+\epsilon))\quad \mbox{for all $t\in\mathbb{R}$}.
\eeaa
Also, notice that $u^{\infty}(t,\xi_0)\geq0=P(0,\xi)$ and $v^{\infty}(t,\xi_0)\leq1=Q(0,\xi)$ for all $\xi\geq\xi_0$. Using the comparison principle, we obtain
\beaa
u^{\infty}(s+t,\xi)\geq P(t,\xi),\quad v^{\infty}(s+t,\xi)\leq Q(t,\xi)\quad \mbox{for all $t>0$, $s\in\mathbb{R}$ and $\xi\geq\xi_0$},
\eeaa
which is equivalent to
\beaa
u^{\infty}(t,\xi)\geq P(t-s,\xi),\quad v^{\infty}(t,\xi)\leq Q(t-s,\xi)\quad \mbox{for all $t>s$, $s\in\mathbb{R}$ and $\xi\geq\xi_0$}.
\eeaa
 By taking $s\to-\infty$ and using {\bf Claim 3}, we have
\beaa
u^{\infty}(t,\xi)\geq P^*(\xi)=U(\xi-(h_2+\epsilon)),\quad v^{\infty}(t,\xi)\leq Q^*(\xi)=V(\xi-(h_2+\epsilon))
\eeaa
for all $t\in\mathbb{R}$ and $\xi\geq \xi_0$.

By a similar process used as above, we can conclude that there exists $\xi_1\gg1$ such that
\bea\label{sliding-L}
u^{\infty}(t,\xi)\geq U(\xi-(h_2+\epsilon)),\quad v^{\infty}(t,\xi)\leq V(\xi-(h_2+\epsilon))
\eea
for all $t\in\mathbb{R}$ and $\xi\leq -\xi_1$ by taking $\epsilon>0$ smaller if necessary.

Notice that by the continuity, \eqref{sliding-L} still holds for all $t\in\mathbb{R}$ and $\xi\in[-\xi_1,\xi_0]$ by choosing $\epsilon>0$ further smaller if necessary. Therefore, we have proved {\bf Claim 2}. However, this contradicts the definition of $h_2$. Hence, we must have $h_1=h_2$ when {\bf Case 1} occurs.

We now treat {\bf Case 2}. In this case, one can apply the sliding method used above to show that
\beaa
u^{\infty}(t,\xi)\leq U(\xi-(h_1-\epsilon)),\quad v^{\infty}(t,\xi)\geq V(\xi-(h_1-\epsilon)),\ \ \forall t,\,\xi\in\mathbb{R}
\eeaa
for some small $\epsilon>0$. This contradicts the definition of $h_1$, which means that $h_1>h_2$ is impossible.
Hence, it is necessary that $h_1=h_2$ when {\bf Case 2} occurs. The proof is thus complete.
\end{proof}

With the aid of Lemma \ref{lem: h}, we are now ready to present

\begin{proof}[Proof of {Proposition~\ref{prop1}}] Lemma \ref{lem: h} tells us that
\bea\nonumber
u^{\infty}(t,\xi)=U(\xi-\hat{h}),\quad v^{\infty}(t,\xi)=V(\xi-\hat{h})\quad \mbox{for all $t,\,\xi\in\mathbb{R}$}
\eea
with $\hat{h}=h_1=h_2$. It then follows that
\beaa
\lim_{n\to\infty}(\hat{u},\hat{v})(t+t_n,\xi)= (U,V)(\xi-\hat{h})\quad \mbox{in $C_{loc}^{{(1+\beta)/2},1+\beta}(\mathbb{R}\times\mathbb{R})$}.
\eeaa
Since the time sequence $\{t_n\}$ can be chosen arbitrarily, we have
\beaa
\lim_{t\to\infty}(\hat{u},\hat{v})(t,\xi)=(U,V)(\xi-\hat{h})\quad \mbox{uniformly for $\xi$ in any compact subset of $\mathbb{R}$}.
\eeaa
By \eqref{hat-u-v}, we thus obtain
\bea\label{conv}
\lim_{t\to\infty}({u},{v})(t,x)=(U,V)(x-c_{uv}t+\hat{h})\quad \mbox{locally uniformly in $x-c_{uv}t$ with $x\geq0$}.
\eea
Moreover, from \eqref{h1} and \eqref{h2} and the fact that $(U,V)(-\infty)=(1,0)$ and $(U,V)(\infty)=(0,1)$, it is clear to see that for each $\epsilon>0$, there exists
$N'>0$ and $M'>0$ such that $t\geq N'$ implies that
\beaa
|(u,v)(t,x)- (U,V)(x-c_{uv}t-\hat{h})|<\epsilon\quad \mbox{{if $0\leq x\leq c_{uv}t-M'$ or $x\geq c_{uv}t+M'$}},
\eeaa
which, combined with \eqref{conv}, yields {\eqref{prop1-result}}. This completes the proof.
\end{proof}

\medskip

{
\subsection{The proof of Theorem~\ref{thm1}}

The construction of super and subsolutions in the previous subsection motivates us to deal with the case that
initial data $(u_0,v_0)$ satisfies {\bf(A1)}. Here we shall construct a new type of super-subsolutions when {\bf(A1)} holds; but
the process becomes more complicated.
Then Theorem~\ref{thm1} follows using an argument similar to that of Proposition~\ref{prop1}.
}

{
\subsubsection{The construction of a supersolution}
Denote a  supersolution $(\overline{u},\underline{v})$ by
\bea\label{supersol-2}
\begin{cases}
\overline{u}(t,x)=U(x-c_{uv}t+\zeta(t))+U(-x-c_{uv}t+\zeta(t))-1+\hat{p}(t),\ t\geq0,\ x\in\mathbb{R},\\
\underline{v}(t,x)=(1-\hat{q}(t))\Big[V(x-c_{uv}t+\zeta(t))+V(-x-c_{uv}t+\zeta(t))\Big],\ t\geq0,\ x\in\mathbb{R},
\end{cases}
\eea
where
\bea\label{parameters2}
\hat{p}(t)=\hat{p}_0e^{-\beta t},\ \,\hat{q}(t)=\hat{q}_0e^{-\beta t}, \ \,\zeta(t)=\zeta_0-\zeta_1 e^{-(\beta/2)t}
\eea
for some $\hat{p}_0,\hat{q}_0,\beta>0$ and $\zeta_i\in\mathbb{R}$ ($i=0,1$) that will be determined later.
The form of $\overline{u}$ here is inspired by \cite{FM1977}.
}

{
\begin{lemma}\label{lem:super-sol2}
Let $\overline{u}$ and $\underline{v}$ be defined in \eqref{supersol-2}.
For any $\zeta_1<0$ and
$\hat{p}_0,\hat{q}_0>0$ with $\hat{q}_0>2b \hat{p}_0$,
there exists $T^{*}\geq0$ such that
\bea\label{upper-ineq*}
N_1[\overline{u},\underline{v}]\geq0,\quad N_2[\overline{u},\underline{v}]\leq0\quad \mbox{in $[T^{*},\infty)\times (-\infty,\infty)$,}
\eea
provided $\beta>0$ is sufficiently small and $\zeta_0$ is sufficiently close to $-\infty$.
\end{lemma}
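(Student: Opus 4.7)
My plan is to follow the structure of the proof of Lemma~\ref{lem:super-sol}: compute the defects $N_1[\overline{u},\underline{v}]$ and $N_2[\overline{u},\underline{v}]$ explicitly by invoking the traveling-wave identity \eqref{TW sys}, then verify the inequalities region by region. The novelty is that $\overline{u},\underline{v}$ are built from \emph{two} translated copies of the bistable front, so one must track cross-interactions between the two sides. Set $\xi_\pm(t,x):=\pm x-c_{uv}t+\zeta(t)$, $U_\pm:=U(\xi_\pm)$, $V_\pm:=V(\xi_\pm)$, $f(u,v):=u(1-u-av)$, $g(u,v):=v(1-v-bu)$. By the symmetry $x\mapsto -x$ it suffices to consider $x\geq 0$, hence $\xi_+\geq\xi_-$. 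A direct differentiation using \eqref{TW sys} produces
\begin{align*}
N_1[\overline{u},\underline{v}] &= \hat{p}'(t)+\zeta'(t)(U'_++U'_-)+r\bigl[f(U_+,V_+)+f(U_-,V_-)-f(\overline{u},\underline{v})\bigr],\\
N_2[\overline{u},\underline{v}] &= -\hat{q}'(t)(V_++V_-)+(1-\hat{q})\zeta'(V'_++V'_-) \\
&\quad +(1-\hat{q})\bigl[g(U_+,V_+)+g(U_-,V_-)\bigr]-g(\overline{u},\underline{v}).
\end{align*}
Expanding the bracketed nonlinearities around the factored form of $\overline{u},\underline{v}$ cancels the linear-in-$(U_\pm-1,V_\pm)$ parts and leaves leading terms $r\hat{p}$ in the first bracket and $b(1-\hat{q})\hat{p}(V_++V_-)$ in the second, together with bilinear ``interaction residues'' drawn from $\{V_\pm,\,1-U_\pm,\,\hat{p},\,\hat{q}\}$.

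I then split the analysis (for $x\geq 0$) into three regions: (a)~$\xi_+\leq-M$ (so also $\xi_-\leq-M$); (b)~$|\xi_+|\leq M$; (c)~$\xi_+\geq M$. Taking $\zeta_0$ close enough to $-\infty$ forces $\xi_-=\xi_+-2x\leq-M$ throughout Cases~(b) and (c), so the $\xi_-$-side always lies deep in the left tail and, by Lemma~\ref{lem:AS-}, contributes only exponentially small corrections. In Case~(a) the leading term of $N_1$ is $(r-\beta)\hat{p}_0 e^{-\beta t}$, positive once $\beta<r$; for $N_2$ in the same case, the bad terms $-\hat{q}'(V_++V_-)$ and $b(1-\hat{q})\hat{p}(V_++V_-)$ are of order $e^{-\beta t}(V_++V_-)$, while the good term $(1-\hat{q})\zeta'(V'_++V'_-)$ is of order $e^{-(\beta/2)t}(V_++V_-)$ because $V'/V\to\lambda_4$ at $-\infty$ and $|\zeta'(t)|$ is of order $e^{-(\beta/2)t}$, so the good term dominates once $t$ is large. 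In Case~(c), where $V_+\to 1$ and $U_+\to 0$, the argument reduces essentially to the one-front calculation behind Lemma~\ref{lem:super-sol}: the leading contribution to $N_2$ is $\bigl[\beta\hat{q}_0-(1-\hat{q})(\hat{q}_0-b\hat{p}_0)\bigr]e^{-\beta t}$, and the residual cross-term from the $\xi_-$-side adds another bad contribution controlled by $b\hat{p}_0$, so the margin $\hat{q}_0>2b\hat{p}_0$ absorbs both once $\beta$ is small enough. The sign $\zeta_1<0$ is used precisely to make $\zeta'(U'_++U'_-)\geq 0$ and $\zeta'(V'_++V'_-)\leq 0$.

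The main obstacle is Case~(b): on the compact interval $|\xi_+|\leq M$ the exponential asymptotics are unavailable, and one must instead rely on the uniform bounds $\max_{|\xi|\leq M}U'(\xi)\leq -\kappa_1<0$ and $\min_{|\xi|\leq M}V'(\xi)\geq\kappa_2>0$ to ensure that the $\zeta'$ contributions, of size $e^{-(\beta/2)t}$, strictly dominate the algebraic perturbations of size $e^{-\beta t}$ produced by $\hat{p}',\hat{q}'$ and the bilinear residues. Collecting the estimates from all three cases and choosing $\beta>0$ sufficiently small (in particular $\beta<\min\{r,\,r(a-1),\,b-1\}$), $\zeta_0$ sufficiently close to $-\infty$, and $T^*$ sufficiently large then yields \eqref{upper-ineq*}.
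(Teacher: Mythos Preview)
Your proposal is correct and follows essentially the same strategy as the paper's proof. You compute the same defect identities for $N_1$ and $N_2$, exploit the $x\mapsto -x$ symmetry, use $\zeta_1<0$ so that $\zeta'(U'_++U'_-)\ge 0$ and $\zeta'(V'_++V'_-)\le 0$, and push $\zeta_0\to-\infty$ to force the $\xi_-$-contributions $(1-U_-)$ and $V_-$ into the exponentially small regime of Lemma~\ref{lem:AS-}. Your three-case split according to $\xi_+\le -M$, $|\xi_+|\le M$, $\xi_+\ge M$ is exactly the paper's split into the ranges $1-\delta\le U_+\le 1$, the middle zone, and $0\le U_+\le\delta$ (just parametrized by $\xi_+$ instead of $(U_+,V_+)$, which is equivalent by monotonicity), and in each region the mechanism you describe---$(r-\beta)\hat p$ dominance, the $|\zeta'|\,V'_\pm/V_\pm$ term of order $e^{-(\beta/2)t}$ beating $e^{-\beta t}$ terms, the $\hat q_0>2b\hat p_0$ margin, and the $\kappa_i$-bounds on $U'_+,V'_+$ in the compact zone---matches the paper's computation. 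The only refinement worth making explicit is that, in addition to $\beta<\min\{r,\,r(a-1),\,b-1\}$, one must also take $\beta$ below $\lambda_u c_{uv}$ and $\lambda_v c_{uv}$ so that the residual terms $K_1e^{-\lambda_u(c_{uv}t-\zeta_0)}$ and $K_2e^{-\lambda_v(c_{uv}t-\zeta_0)}$ are genuinely dominated by $\hat p_0e^{-\beta t}$ (this is also what guarantees $U_--1+\hat p>0$, which your Case~(c) analysis of $N_1$ implicitly uses).
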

}
\begin{proof}
For notational convenience, we denote
$$\xi_{\pm}=\pm x-c_{uv}t+\zeta(t),\ \ \ (U_{\pm},V_{\pm})=(U(\xi_{\pm}),V(\xi_{\pm})).$$
Then after some direct computation, we obtain
\beaa
N_1[\overline{u},\underline{v}]&=&(-c_{uv}+\zeta'(t))(U'_{+}+U'_{-})+\hat{p}'(t)\\
&&-d(U''_{+}+U''_{-})-f(U_{+}+U_{-}-1+\hat{p},(1-\hat{q})(V_{+}+V_{-})),
\eeaa
where $f(u,v):=ru(1-u-av)$. Since $-c_{uv}U'_{\pm}-dU''_{\pm}=f(U_{\pm},V_{\pm})$, we thus have
\bea
N_1[\overline{u},\underline{v}]&=&\zeta'(t)(U'_{+}+U'_{-})+\hat{p}'+f(U_+,V_+)+f(U_-,V_-)\label{N1-sec4}\\
&&\quad-f(U_{+}+U_{-}-1+\hat{p},(1-\hat{q})(V_{+}+V_{-}))\notag\\
&=&\zeta'(t)(U'_{+}+U'_{-})+\hat{p}'+f(U_+,V_+)+f(U_-,V_-)\notag\\
&&\quad-f(U_{+}+U_{-}-1+\hat{p},V_+)+f(U_{+}+U_{-}-1+\hat{p},V_+)\notag\\
&&\quad-f(U_{+}+U_{-}-1+\hat{p},(1-\hat{q})(V_{+}+V_{-})).\notag
\eea
Similarly, making use of  $-c_{uv}V'_{\pm}-V''_{\pm}=g(U_{\pm},V_{\pm})$ we obtain
\bea
\ \ \ N_2[\overline{u},\underline{v}]&=&-\hat{q}'(t)(V_{+}+V_{-})+(1-\hat{q})\zeta'(t)(V'_{+}+V'_{-})
                                    +(1-\hat{q})[g(U_+,V_+)+g(U_-,V_-)]\label{N2-sec4}\\
                                   &&\qquad-g(U_{+}+U_{-}-1+\hat{p},(1-\hat{q})(V_{+}+V_{-}))\notag\\
                               &=&-\hat{q}'(t)(V_{+}+V_{-})+(1-\hat{q})\zeta'(t)(V'_{+}+V'_{-})\notag\\
                                   &&\qquad-(1-\hat{q})V_+[-V_-+\hat{q}(V_++V_-)-b(U_--1+\hat{p})]\notag\\
                                   &&\qquad -(1-\hat{q})V_-[-V_++\hat{q}(V_++V_-)-b(U_+-1+\hat{p})],\notag
\eea
where $g(u,v)=v(1-v-bu)$.

We shall show that $N_1[\overline{u},\underline{v}]\geq0$ and $N_2[\overline{u},\underline{v}]\leq0$ for $x\in\mathbb{R}$ and sufficiently large $t$. Here we only consider the range $x\geq0$
 since a similar process can be used for the case $x<0$.
First, we take $\zeta_1<0$ such that $\zeta'<0$. Since $x\geq0$, $U'<0$ and $\zeta'<0$,
we have
\beaa
1-U_-=1-U(-x-c_{uv}t+\zeta(t))\leq 1-U(-c_{uv}t+{\zeta_0}).
\eeaa
We also require $\zeta_0<0$. Then, by Lemma~\ref{lem:AS-}, there exist two constants $\lambda_u>0$ and $K_1>0$ independent of
${\Lambda:=(\hat{p}_0,\hat{q}_0,\beta,\zeta_0)}$,
such that
\bea\label{est:1-U-}
1-U_-\leq K_1e^{-\lambda_{u}(c_{uv}t-{\zeta_0})}\quad \mbox{for all $x\geq0$ and $t\geq0$.}
\eea
Without loss of generality, we may assume that $U_- -1 +\hat{p}>0$
since we may choose $\beta<-\lambda_{u}c_{uv}$ and $-\zeta_0$ sufficiently large.

Similarly, thanks to Lemma~\ref{lem:AS-} we may find two constants $\lambda_v>0$ and $K_2>0$ (independent of
$\Lambda$) such that
\bea\label{est:V-}
V_-\leq K_2e^{-\lambda_{v}(c_{uv}t-{\zeta_0})}\quad \mbox{for all $x\geq0$ and $t\geq0$.}
\eea

To derive the differential inequalities, we divide the discussion into three cases.

{\bf Case 1:} $0\leq U_+\leq \delta$ and $1-\delta\leq V_+\leq 1$ for some small $\delta>0$.
Since $\delta$ is sufficiently small, over the range $0\leq u\leq \delta$ and $1-\delta\leq v\leq1$,
there exists $m_1>0$ such that
$(\partial f/\partial u)(u,v)=r(1-2u-av)<-m_1$ ($a>1$ is also used). Thus, the mean value theorem gives
\bea\label{f1}
f(U_+,V_+)-f(U_{+}+U_{-}-1+\hat{p},V_+)\geq m_1(U_- -1+\hat{p})
\eea
for $0\leq U_+\leq \delta$ and $1-\delta\leq V_+\leq 1$. Also, by some simple computations,
\bea\label{f2}
&&f(U_{+}+U_{-}-1+\hat{p},V_+)-f(U_{+}+U_{-}-1+\hat{p},(1-\hat{q})(V_++V_-))\\
&&=ra(U_{+}+U_{-}-1+\hat{p})[-V_{+}+(1-\hat{q})(V_{+}+V_{-})]\notag\\
&&\geq-ra\hat{q}(U_{+}+U_{-}-1+\hat{p})(V_{+}+V_{-}).\notag
\eea
Due to the range of $U_+$ and $V_+$ in Case 1, we deduce from \eqref{f2} that
\bea\label{f2-2}
f(U_{+}+U_{-}-1+\hat{p},V_+)-f(U_{+}+U_{-}-1+\hat{p},(1-\hat{q})(V_++V_-))
\geq-2ra\hat{q}(\delta+\hat{p}).
\eea
Obviously, it holds
\bea\label{f3}
f(U_-,V_-)\geq -raU_-V_-\geq -raV_-.
\eea
As a consequence, by \eqref{f1}, \eqref{f2-2}, \eqref{f3} and the fact $U'_{\pm}\zeta'>0$,  we see from \eqref{N1-sec4}
that
\beaa
N_1[\overline{u},\underline{v}]\geq \hat{p}'+m_1(U_- -1+\hat{p})-2ra\hat{q}(\delta+\hat{p})-raV_-
\eeaa
for $0\leq U_+\leq \delta$ and $1-\delta\leq V_+\leq 1$.
In view of \eqref{est:1-U-} and \eqref{est:V-}, we obtain
\beaa
N_1[\overline{u},\underline{v}]\geq (-\beta \hat{p}_0 +m_1\hat{p}_0-2raq_0(\delta+\hat{p}_0e^{-\beta t})) e^{-\beta t}
-m_1 K_1e^{-\lambda_{u}(c_{uv}t-{\zeta_0})}-raK_2e^{-\lambda_{v}(c_{uv}t-{\zeta_0})}.
\eeaa
Hence, there exists $T_1\gg1$ such that $N_1[\overline{u},\underline{v}]\geq0$ for $x\geq0$ and $t\geq T_1$ within the range in Case 1, provided $\beta>0$ and $\delta>0$ are sufficiently small.

We next consider the inequality of $N_2[\overline{u},\underline{v}]$. Since $\zeta'V'_{\pm}<0$, from  \eqref{N2-sec4} it follows that,
{for all large $t$ such that $1-\hat{q}>0$, we have}
\beaa
N_2[\overline{u},\underline{v}]&\leq&  -\hat{q}'(V_{+}+V_{-})-(1-\hat{q})V_+[-V_-+\hat{q}(V_++V_-)-b(U_--1+\hat{p})]\\
&&\quad -(1-\hat{q})V_-[-V_++\hat{q}(V_++V_-)-b(U_+-1+\hat{p})]\\
&\leq& -\hat{q}'(V_{+}+V_{-})+2(1-\hat{q})V_+V_- - (1-\hat{q})\hat{q}(V_+ +V_-)^2+b(1-\hat{q})\hat{p}(V_++V_-)\\
&\leq& -2\hat{q}'+2(1-\hat{q})V_- -(1-\hat{q})\hat{q}(1-\delta)^2+2b(1-\hat{q})\hat{p},
\eeaa
where we have used $1-\delta \leq V_{+}\leq1$ and $0\leq V_{-}\leq1$. This, together with \eqref{est:V-}, yields
\beaa
N_2[\overline{u},\underline{v}]\leq 2\beta \hat{q}_0 e^{-\beta t}{+2(1-\hat{q})K_2e^{-\lambda_{v}(c_{uv}t-{\zeta_0})}}- (1-\hat{q})e^{-\beta t}\Big[\hat{q}_0 (1-\delta)^2-2b \hat{p}_0\Big].
\eeaa
Then one can find $T_2\gg1$ such that $N_2[\overline{u},\underline{v}]\leq0$ for $x\geq0$ and $t\geq T_2$ within the range in Case 1,
provided $\beta>0$ small enough and $\hat{q}_0 (1-\delta)^2>2b \hat{p}_0$ {by choosing $\delta$ smaller if necessary}.

{\bf Case 2:} $1-\delta\leq U_+\leq 1$ and $0\leq V_+\leq \delta$ for some small $\delta>0$. In this case,
there exists $m_2>0$ such that
$(\partial f/\partial u)(u,v)=r(1-2u-av)<-m_2$ for $1-\delta\leq u\leq 1$ and $0\leq v\leq\delta$.
This allows us to apply the same argument in Case 1 to deduce that for some large $T_3>0$, $N_1[\overline{u},\underline{v}]\geq0$ for $t\geq T_3$. The details are omitted here.

To verify $N_2[\overline{u},\underline{v}]\leq0$, we first observe that $V_-\leq V_+\leq\delta$ when $x\geq0$. Thus,
one can find $\kappa>0$ such that $V'_{\pm}\geq \kappa V_{\pm}$. Recall that $\zeta'<0$. Then we have
\beaa
(1-\hat{q})\zeta'(V'_++V'_-)\leq \kappa(1-\hat{q})\zeta'(V_++V_-),
\eeaa
{as long as $1-\hat{q}>0$.}
From  \eqref{N2-sec4} (also see the computation of $N_2[\overline{u},\underline{v}]$ in Case 1) we have
\beaa
N_2[\overline{u},\underline{v}]&\leq&
-\hat{q}'(V_{+}+V_{-})+\kappa(1-\hat{q})\zeta'(V_{+}+V_{-})+2(1-\hat{q})V_+V_- \\
 &&\quad- (1-\hat{q})\hat{q}(V_+ +V_-)^2+b(1-\hat{q})\hat{p}(V_++V_-)
\\
&\leq& (V_{+}+V_{-})\Big[-\hat{q}'+\kappa(1-\hat{q})\zeta'+2(1-\hat{q})\frac{V_+V_-}{V_+ +V_-}+b(1-\hat{q})\hat{p}\Big]\\
&\leq& (V_{+}+V_{-})\Big[\beta \hat{q}_0 e^{-\beta t} -\kappa(1-\hat{q}_0 e^{-\beta t})\Big(\frac{\beta|\zeta_1|}{2} e^{-(\beta/2) t}\Big)\\
&&\quad +2(1-\hat{q}_0 e^{-\beta t})K_2e^{-\lambda_{v}(c_{uv}t-{\zeta_0})} +b(1-\hat{q}_0 e^{-\beta t})\hat{p}_0e^{-\beta t}\Big],
\eeaa
{provided $1-\hat{q}>0$.}
Therefore, it is easily seen that, for some large $T_3>0$, $N_2[\overline{u},\underline{v}]\leq0$ for $t\geq T_3$ for all sufficiently small $\beta>0$.

{ When $(U_+,V_+)$ does not satisfy Case 1 and Case 2, we are led to consider:
}

{\bf Case 3:} {the middle part: $\delta_1\leq U_+,V_+\leq1-\delta_2$ for some small $\delta_i>0$ for $i=1,2$.}
%$\delta\leq U_+,V_+\leq1-\delta$ for the small $\delta$ used in Cases 1 and 2.
In this range, there exists $\kappa_1>0$ such that
$U'_+\leq -\kappa_1$, which together with $U'_-<0$ and $\zeta'<0$ implies that $\zeta'(U'_+ + U'_-)\geq -\zeta' \kappa_1$.
For convenience, we use $C$ as a positive constant independent of $\Lambda:=(\hat{p}_0,\hat{q}_0,\beta,\zeta_0)$ and {$(\delta_1,\delta_2)$}, which may vary from
inequality to inequality.
By the Lipschitz continuity of $f$, there exists $C>0$
\beaa
|f(U_+,V_+)-f(U_{+}+U_{-}-1+\hat{p},V_+)|\leq C(\hat{p}-1+U_-).
\eeaa
Moreover, as seen in the calculations of \eqref{f2} and \eqref{f3},
\beaa
&&f(U_{+}+U_{-}-1+\hat{p},V_+)-f(U_{+}+U_{-}-1+\hat{p},(1-\hat{q})(V_++V_-))\geq -C(1+\hat{p}_0)\hat{q},\\
&&f(U_-,V_-)\geq -CV_-.
\eeaa
Therefore, using \eqref{est:V-}, from \eqref{N1-sec4} we get
\beaa
N_1[\overline{u},\underline{v}]&\geq&-\kappa_1\zeta'+\hat{p}'-C[\hat{p}-1+U_- +(1+\hat{p}_0)\hat{q}+V_-]\\
&\geq&\kappa_1\frac{\beta}{2}|\zeta_1|e^{-(\beta/2) t}-\beta \hat{p}_0e^{-\beta t}
-C[\hat{p}_0e^{-\beta t}+(1+\hat{p}_0)\hat{q}_0e^{-\beta t}+e^{-\lambda_{v}(c_{uv}t-{\zeta_0})})].
\eeaa
Then there exists $T_4\gg1$ such that $N_1[\overline{u},\underline{v}]\geq0$ for all $t\geq T_4$, provided $\beta>0$ is sufficiently small.

On the other hand, in this range there exists $\kappa_2>0$ such that $V'_+\geq \kappa_2$, which
 together with $V'_->0$ and $\zeta'<0$ imply that $\zeta'(V'_+ + V'_-)\leq \zeta' \kappa_2$.
Thanks to \eqref{est:V-}, we see from \eqref{N2-sec4} that
\beaa
N_2[\overline{u},\underline{v}]&\leq&
-\hat{q}'(V_+ +V_-)+\kappa_2(1-\hat{q})\zeta'+2(1-\hat{q})V_+V_- +b(1-\hat{q})\hat{p}(V_++V_-)\\
&\leq& 2\beta \hat{q}_0 e^{-\beta t}-\kappa_2(1-\hat{q}_0e^{-\beta t}) \frac{\beta}{2}|\zeta_1|e^{-(\beta/2) t}+C[e^{-\lambda_{v}(c_{uv}t-{\zeta_0})}+\hat{p}_0e^{-\beta t}],
\eeaa
{provided $1-\hat{q}>0$.}
Then there exists $T_5\gg1$ such that $N_2[\overline{u},\underline{v}]\leq0$ for all $t\geq T_5$, provided $\beta>0$ is sufficiently small.

Combining the discussion in Cases 1-3 and taking $T^*=\max\{T_1,T_2,T_3,T_4,T_5\}$,
indeed we have shown that there exists some small $\beta^*>0$ such that
\beaa
N_1[\overline{u},\underline{v}]\geq0,\quad N_2[\overline{u},\underline{v}]\leq0\quad\mbox{for $x\in\mathbb{R}$ and $t\geq T^*$}
\eeaa
provided $\beta\in(0, \beta^*)$, {$-\zeta_0\gg1$} and $\hat{q}_0 (1-\delta)^2>2b \hat{p}_0$. {This completes the proof.}
\end{proof}

\begin{lemma}\label{lem:v-to-1}
For each $c>c_u:=2\sqrt{rd}$, {$v(t,x)$} converges to $1$ uniformly for {$|x|\geq ct$} as $t\to\infty$.
\end{lemma}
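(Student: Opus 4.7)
The guiding intuition is that in the region $|x|\ge ct$ the $u$-component will be exponentially small in $t$, so $v$ will satisfy (asymptotically) a Fisher--KPP equation with initial data bounded below by a positive constant (thanks to {\bf(A1)}), and this should force $v\to 1$. My plan is to turn this into a subsolution argument with a moving boundary.

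First I would fix an intermediate speed $c_1\in(c_u,c)$. Lemma~\ref{lem:exp-decay} will provide $K,\alpha>0$ and $T_0>0$ with $u(t,x)\le Ke^{-\alpha t}$ for $x\ge c_1 t$, $t\ge T_0$. Since $u_0$ has compact support under {\bf(A1)}, the left-moving KPP wave $MU_{KPP}(-x-c_u t)$ is also a supersolution of $u_t=du_{xx}+ru(1-u)$ that dominates $u_0$ for $M$ large, and the symmetric estimate will extend the bound to $x\le-c_1 t$. For the lower bound of $v$ at $t=T_0$, I would combine $v_t\ge v_{xx}-b(1+\|u\|_\infty)v$ with $v_0\ge\delta_0>0$ and a constant-in-$x$ ODE subsolution to obtain $v(T_0,x)\ge\delta_1>0$ uniformly in $x$.

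The heart of the argument is the following subsolution on the moving domain $\{(t,x):t\ge T_0,\;x\ge c_1 t\}$:
\[
\underline{v}(t,x):=\phi(t)\bigl(1-e^{-\lambda(x-c_1 t)}\bigr),\qquad \lambda\in(0,c_1),
\]
where $\phi$ solves $\phi'=\phi(1-\phi-bKe^{-\alpha t})$ with $\phi(T_0)=\delta_1$. Provided $T_0$ is so large that $bKe^{-\alpha T_0}<1/2$, standard asymptotically autonomous ODE theory will give $\phi(t)\in(0,1)$ and $\phi(t)\to 1$ as $t\to\infty$. A direct computation using $u\le Ke^{-\alpha t}$ in the region, $\phi G\le\phi$ with $G:=1-e^{-\lambda(x-c_1 t)}$, and $\lambda<c_1$, will produce
\[
\underline{v}_t-\underline{v}_{xx}-\underline{v}(1-\underline{v}-bu)\le G\bigl[\phi'-\phi(1-\phi-bKe^{-\alpha t})\bigr]-\phi\lambda(c_1-\lambda)e^{-\lambda(x-c_1 t)}\le 0.
\]
The parabolic boundary conditions $\underline{v}(t,c_1 t)=0\le v(t,c_1 t)$ and $\underline{v}(T_0,x)\le\delta_1\le v(T_0,x)$ are immediate, so the scalar parabolic comparison principle (treating $u$ as a known coefficient in the $v$-equation) will yield $v\ge\underline{v}$ on the region.

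For $x\ge ct$, noting $x-c_1 t\ge(c-c_1)t$, combining with the easy upper bound $v\le 1+Me^{-t}$ from Lemma~\ref{lem:simple est} gives
\[
\phi(t)\bigl(1-e^{-\lambda(c-c_1)t}\bigr)\le v(t,x)\le 1+Me^{-t}\longrightarrow 1
\]
uniformly on $x\ge ct$. The symmetric subsolution $\phi(t)(1-e^{\lambda(x+c_1 t)})$ on $\{x\le-c_1 t\}$ will handle the other half-line. The main technical obstacle will be the moving boundary $x=\pm c_1 t$: I plan to bypass it by forcing the subsolution to vanish exactly on that line (so no boundary data for $v$ must be verified), at the price of an extra diffusive term that is absorbed precisely thanks to the assumption $\lambda<c_1$.
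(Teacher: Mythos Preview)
Your proposal is correct. The paper itself does not give a proof here; it simply says the argument is ``similar to that of \cite[Lemma 2]{Carrere} with minor modifications'' and omits the details. Your argument---exponential smallness of $u$ in $\{|x|\ge c_1 t\}$ via Lemma~\ref{lem:exp-decay} (and its reflection), followed by the moving-boundary subsolution $\underline v(t,x)=\phi(t)(1-e^{-\lambda(x-c_1 t)})$ with $\phi$ solving the perturbed logistic ODE---is a clean, self-contained implementation of exactly this idea. The key computation
\[
\underline v_t-\underline v_{xx}-\underline v(1-\underline v-bu)\le G\bigl[\phi'-\phi(1-\phi-bKe^{-\alpha t})\bigr]-\phi\lambda(c_1-\lambda)e^{-\lambda(x-c_1 t)}\le 0
\]
is valid (the step $\phi G(1-\phi G-\cdot)\ge\phi G(1-\phi-\cdot)$ holds simply because $\phi G\ge 0$ and $\phi G\le\phi$, with no sign assumption on the bracket needed), the boundary condition $\underline v(t,c_1 t)=0$ eliminates any need to control $v$ on the moving line, and the scalar comparison for the $v$-equation with $u$ treated as a coefficient is standard. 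One cosmetic point: your preliminary bound $v_t\ge v_{xx}-b(1+\|u\|_\infty)v$ should rather read $v_t\ge v_{xx}-Cv$ with $C$ absorbing also the $-v^2$ term via Lemma~\ref{lem:simple est}, but this does not affect the conclusion $v(T_0,\cdot)\ge\delta_1>0$.
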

\begin{proof}
The argument is similar to that of \cite[Lemma 2]{Carrere} with minor modifications; we omit the details here.
\end{proof}

{
\begin{lemma}\label{lem:order-3}
Let $(\overline{u},\underline{v})$ be defined in \eqref{supersol-2}.
Then there exist $\beta, \hat{p}_0, \hat{q}_0>0$, $\zeta_1<0$, $\widetilde{T}>0$,
$T^{*}>0$ and $\zeta^{*}_0<0$ such that
the solution $(u,v)$
of \eqref{LV-sys} and  \eqref{LV-ic} with {\bf(A1)} satisfies
\beaa
u(t+\widetilde{T},x)\leq\overline{u}(t,x),\quad v(t+\widetilde{T},x)\geq \underline{v}(t,x)\quad \mbox{in $[T^*,\infty)\times(-\infty,\infty)$},
\eeaa
for any $\zeta_0\leq \zeta_0^{*}$.
\end{lemma}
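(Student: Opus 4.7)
The plan is to combine Lemma~\ref{lem:super-sol2} with the comparison principle (Lemma~\ref{CP}). First one applies Lemma~\ref{lem:super-sol2} to fix $\beta,\hat{p}_0,\hat{q}_0>0$ with $\hat{q}_0>2b\hat{p}_0$, $\zeta_1<0$, and a sufficiently large $T^*>0$, so that $(\overline{u},\underline{v})$ defined by \eqref{supersol-2} is a supersolution of \eqref{LV-sys} on $[T^*,\infty)\times\mathbb{R}$ for every $\zeta_0$ with $-\zeta_0$ large enough. Once this is done, it suffices to establish the pointwise ordering at the initial time $t=T^*$, namely
\[
u(T^*+\widetilde{T},x)\leq \overline{u}(T^*,x),\qquad v(T^*+\widetilde{T},x)\geq \underline{v}(T^*,x),\qquad x\in\mathbb{R};
\]
the lemma then follows at once from Lemma~\ref{CP}.

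To achieve this initial ordering, I would fix some speed $c>c_u$ and first choose the time shift $\widetilde{T}$ so large that, at time $\widetilde{T}+T^*$,
\[
u\leq 1+\tfrac{1}{4}\hat{p}(T^*)\ \text{on all of }\mathbb{R},\qquad u\leq\tfrac{1}{4}\hat{p}(T^*)\ \text{and}\ v\geq 1-\tfrac{1}{4}\hat{q}(T^*)\ \text{for }|x|\geq c(T^*+\widetilde{T}).
\]
These three inequalities follow from Lemma~\ref{lem:simple est}, Lemma~\ref{lem:exp-decay} (applicable because $u_0$ has compact support under {\bf(A1)}), and Lemma~\ref{lem:v-to-1}, respectively. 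Under {\bf(A1)} the strictly positive lower bound on $v_0$ combined with the strong maximum principle also furnishes some $\delta>0$ such that $v(\widetilde{T}+T^*,x)\geq \delta$ throughout the compact set $|x|\leq c(T^*+\widetilde{T})$. With $\widetilde{T}$ now frozen, I would then select $\zeta_0^*<0$ so negative (quantified using the asymptotics in Lemma~\ref{lem:AS-}) that for any $\zeta_0\leq\zeta_0^*$ both arguments $\xi_{\pm}=\pm x-c_{uv}T^*+\zeta(T^*)$ are driven as deep into $-\infty$ as one wishes on $|x|\leq c(T^*+\widetilde{T})$, so that $U(\xi_\pm)\approx 1$ and $V(\xi_\pm)\approx 0$ there.

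With these choices the initial ordering reduces to routine case analysis. On the interior $|x|\leq c(T^*+\widetilde{T})$ one obtains $\overline{u}(T^*,x)\geq 1+\tfrac{1}{2}\hat{p}(T^*)\geq u(\widetilde{T}+T^*,x)$, while $\underline{v}(T^*,x)$ is made smaller than $\delta\leq v(\widetilde{T}+T^*,x)$. On the exterior, say $x>c(T^*+\widetilde{T})>0$, the term $\xi_-$ is automatically very negative so $(U_-,V_-)\approx(1,0)$, and the inequalities collapse to comparing the exponentially small $u$ against $U(\xi_+)+\hat{p}(T^*)$ and the near-$1$ $v$ against $(1-\hat{q}(T^*))V(\xi_+)$, both of which are immediate from the bounds chosen above.

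The main obstacle is keeping the parameter dependencies straight. Lemma~\ref{lem:super-sol2} already requires $-\zeta_0$ to be large in terms of $(\beta,\hat{p}_0,\hat{q}_0,\zeta_1)$ and the exponential rates $\lambda_u,\lambda_v$ from Lemma~\ref{lem:AS-}, and here we must enlarge $-\zeta_0$ further depending on $\widetilde{T}$, on the positive lower bound $\delta$, and on the exterior decay rates. The order of choices must therefore be $(\beta,\hat{p}_0,\hat{q}_0,\zeta_1)\to T^*\to \widetilde{T}\to\zeta_0^*$, and one must verify that enlarging $-\zeta_0$ beyond the threshold supplied by Lemma~\ref{lem:super-sol2} does not spoil the differential inequalities there — but this is automatic because Lemma~\ref{lem:super-sol2} holds uniformly for all $\zeta_0$ past that threshold.
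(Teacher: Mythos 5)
Your proposal is correct and follows the same strategy as the paper: fix the differential inequalities via Lemma~\ref{lem:super-sol2} (with $T^*$ chosen uniformly in $\zeta_0$ once $-\zeta_0$ exceeds a threshold), then secure the pointwise ordering at time $T^*$ by first enlarging the time shift $\widetilde{T}$ (via Lemmas~\ref{lem:simple est}, \ref{lem:exp-decay}, \ref{lem:v-to-1} and the positivity of $v$ on compacts) and only afterwards pushing $\zeta_0$ to $-\infty$, finishing with the comparison principle; the quantifier order $(\beta,\hat p_0,\hat q_0,\zeta_1)\to T^*\to \widetilde T\to\zeta_0^*$ matches the paper's. The one minor cosmetic difference is in the exterior region $|x|>c(T^*+\widetilde{T})$: you bound $\overline{u}(T^*,x)\geq U(\xi_+)+U(\xi_-)-1+\hat p(T^*)\geq \hat p(T^*)-o(1)$ (a positive constant, since $U(\xi_-)\to1$) against $u\leq\tfrac14\hat p(T^*)$, whereas the paper instead pits the exponential tail $\overline{u}(T^*,x)\gtrsim e^{-|\lambda_1||x|}$ from Lemma~\ref{lem:AS+} against the Gaussian decay of $u(T^*+\widetilde{T},\cdot)$; both close the argument, and yours is a touch more elementary.
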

}
{
\begin{proof}
By Lemma~\ref{lem:super-sol2}, one can choose suitable $\beta, \hat{p}_0, \hat{q}_0>0$, $\zeta_0\in \mathbb{R}$, $\zeta_1<0$ and
$T_0\gg1$ such that \eqref{upper-ineq*} holds for $t\geq T_0$. Furthermore, from the proof of Lemma~\ref{lem:super-sol2} we see that
$T_0$ can be chosen independently for all large negative $\zeta_0$ (decreasing $\zeta_0$ strengthens the differential inequalities therein).

In view of Lemma~\ref{lem:simple est},
one can take $T_1\gg1$ such that
\bea\label{u-ub}
u(t,x)\leq 1+ Me^{-rt}\quad\mbox{ for all $t\geq T_1$ and $x\in\mathbb{R}$}.
\eea

We now fix any $L>0$. Thanks to Lemma~\ref{lem:AS-}, for $x\in[-L,L]$ and $t>0$,
\beaa
\overline{u}(t,x)&\geq& U(L-c_{uv}t+\zeta(t))+U(L-c_{uv}t+\zeta(t))-1+\hat{p_0}e^{-\beta t}\\
&\geq& 2(1-Ke^{-\lambda_u(c_{uv}t-\zeta_0)})-1+\hat{p_0}e^{-\beta t},\\
&=& 1-2Ke^{-\lambda_u(c_{uv}t-\zeta_0)}+\hat{p_0}e^{-\beta t}
\eeaa
for some $K>0$ (independent of all negative $\zeta_0$) and $\lambda_u>0$.
Then there exists $T_2\gg1$ such that
\bea\label{u-middle}
\overline{u}(t,x)\geq 1+M e^{-rt}\quad \mbox{for all $t\geq T_2$, $x\in[-L,L]$ and all large negative  $\zeta_0$},
\eea
if necessary we choose $\beta$ smaller such that $\beta<\min\{r,\lambda_uc_{uv}\}$.

On the other hand, since $\beta<\lambda_uc_{uv}$, there exist $T_3\gg1$ and $K'>0$ such that for all $t\geq T_3$,  $x\geq L$ and all large negative $\zeta_0$,
\bea\label{u-infty part}
\overline{u}(t,x)\geq U(x-c_{uv}t+\zeta(t))-K'e^{-\lambda_uc_{uv}t}+\hat{p_0}e^{-\beta t}
\geq U(x-c_{uv}t+\zeta(t)).
\eea
Similarly, we can find $T_4\gg1$ such that for all $t\geq T_4$,  $x\leq -L$ and all large negative  $\zeta_0$,
\bea\label{u-infty part2}
\overline{u}(t,x)\geq U(-x-c_{uv}t+\zeta(t)).
\eea

Set $T^{*}=\max\{T_0,T_1,T_2,T_3,T_4\}$. We shall prove that for some $\widetilde{T}>0$,
\bea\label{cp-ic}
u(T^*+\widetilde{T},x)\leq\overline{u}(T^*,x),\quad v(T^*+\widetilde{T},x)\geq \underline{v}(T^*,x)\quad \mbox{for all $x\in\mathbb{R}$.}
\eea
To see this, we first note that $\underline{v}(T^*,+\infty)= 1-\hat{q}_0e^{-\beta T^*}$, by Lemma~\ref{lem:v-to-1},
we can find $\widetilde{T}\gg1$ such that $v(T^*+\widetilde{T},x)>1-\hat{q}_0e^{-\beta T^*}$ for all $|x|\gg1$.
Together with the fact that $V(-\infty)=0$,
there exists $\tilde{\zeta}\gg1$ such that
$\underline{v}(T^*,x)\leq v(T^*+\widetilde{T},x)$ for all $x\in\mathbb{R}$ if $\zeta_0\leq -\tilde{\zeta}$.
On the other hand,
using \eqref{u-ub}, \eqref{u-middle}, we have
\bea\label{u-middle2}
u(T^*+\widetilde{T},x)\leq 1+ Me^{-r(T^*+\widetilde{T})}\leq 1+ Me^{-rT^*}\leq\overline{u}(T^*,x)
\eea
for all $x\in[-L,L]$ and all large negative $\zeta_0$.

Using \eqref{u-infty part} and \eqref{u-infty part2} and Lemma~\ref{lem:AS+}, one has
\beaa
\overline{u}(T^*,x)\geq C e^{-\lambda_1 x}\quad \mbox{for all $|x|\geq L$},
\eeaa
where $C$ and $\lambda_1$ are two positive constants.
Together with the fact that $u(T^*+\widetilde{T},x)=O(e^{-x^2/(8d(T^*+\widetilde{T}))})$, %(similar to \eqref{u-decay}),
we see that
\bea\label{u-outside}
u(T^*+\widetilde{T},x) \leq \overline{u}(T^*,x)\quad \mbox{for all $|x|\gg1$}.
\eea
Therefore, combining \eqref{u-middle2} and \eqref{u-outside}, one
can find $\hat{\zeta}\gg1$ such that
$u(T^*+\widetilde{T},x)\leq\overline{u}(T^*,x)$ for all $x\in\mathbb{R}$ if $\zeta_0\leq -\hat{\zeta}$.

From the above discussion, we see that
\eqref{cp-ic} holds provided $\zeta_0\leq \zeta_0^*:=-\max\{\hat{\zeta},\tilde{\zeta}\}$.
Therefore, Lemma~\ref{lem:order-3} follows from the comparison principle.
\end{proof}
}

{
\subsubsection{The construction of a subsolution}
Denote a subsolution $(\underline{u},\overline{v})$ by
\bea\label{subsol-2}
\begin{cases}
\ \underline{u}(t,x):=\max\{U(x-c_{uv}t+\zeta(t))+U(-x-c_{uv}t+\zeta(t))-1-p(t),0\},\\
\ \overline{v}(t,x):=(1+q(t))\Big[V(x-c_{uv}t+\zeta(t))+V(-x-c_{uv}t+\zeta(t))\Big],
\end{cases}
\eea
where $\zeta(t)$, $\hat{p}(t)$ and $\hat{q}(t)$ have the same form as in \eqref{parameters2}.
}

{
\begin{lemma}\label{lem:subsol-2}
Let $\underline{u}$ and $\overline{v}$ be defined in \eqref{subsol-2}.
For any $\hat{p}_0,\hat{q}_0>0$ with $\hat{q}_0>2b(1+\hat{q}_0)\hat{p}_0$ and $\zeta_1>0$,
there exists $T^{**}\geq0$ such that
\bea\label{upper-ineq**}
N_1[\underline{u},\overline{v}]\geq0,\quad N_2[\underline{u},\overline{v}]\leq0\quad \mbox{in $[T^{**},\infty)\times (-\infty,\infty)$}
\eea
for all $\zeta_0\leq0$, provided $\beta>0$ is sufficiently small.
\end{lemma}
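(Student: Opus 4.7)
The plan is to adapt the argument that proves Lemma~\ref{lem:super-sol2}, working with the pair \eqref{subsol-2}. Write $\xi_\pm=\pm x-c_{uv}t+\zeta(t)$ and $(U_\pm,V_\pm)=(U(\xi_\pm),V(\xi_\pm))$, substitute $(\underline u,\overline v)$ into $N_1,N_2$, and use $-c_{uv}U'_\pm-dU''_\pm=f(U_\pm,V_\pm)$ and $-c_{uv}V'_\pm-V''_\pm=g(U_\pm,V_\pm)$ (with $f(u,v)=ru(1-u-av)$, $g(u,v)=v(1-v-bu)$) to cancel the leading contributions. On $\{\underline u>0\}$ this produces
\begin{align*}
N_1[\underline u,\overline v]&=\zeta'(U'_++U'_-)-p'+f(U_+,V_+)+f(U_-,V_-)-f(U_++U_--1-p,\overline v),\\
N_2[\underline u,\overline v]&=q'(V_++V_-)+(1+q)\zeta'(V'_++V'_-)+(1+q)[g(U_+,V_+)+g(U_-,V_-)]-g(\underline u,\overline v);
\end{align*}
on $\{\underline u=0\}$, $N_1$ is automatically zero while $N_2$ retains the same form. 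By the symmetry $x\mapsto -x$, it suffices to work on $x\geq 0$, in which case $\xi_-\leq \zeta_0-c_{uv}t\to-\infty$; Lemma~\ref{lem:AS-} then gives $1-U_-\leq K_1 e^{-\lambda_u(c_{uv}t-\zeta_0)}$ and $V_-\leq K_2 e^{-\lambda_v(c_{uv}t-\zeta_0)}$ with $K_i,\lambda_i>0$ independent of $(\beta,\zeta_0)$, and the crucial new structural feature is that $\zeta_1>0$ makes $\zeta'(t)=\tfrac{\beta}{2}\zeta_1 e^{-\beta t/2}$ strictly positive.

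Next, I split the $\xi_+$-axis into the three zones used in Lemma~\ref{lem:super-sol2}: (i) a right end $0\leq U_+\leq\delta,\ 1-\delta\leq V_+\leq 1$; (ii) a left end $1-\delta\leq U_+\leq 1,\ 0\leq V_+\leq\delta$; and (iii) a middle zone on which $|U'_+|,V'_+\geq\kappa_0>0$. In zones (i) and (ii) the bistability assumption $a,b>1$ forces $\partial_u f=r(1-2u-av)<-m<0$ near both $(0,1)$ and $(1,0)$, so the mean-value theorem turns the $f$-difference in $N_1$ into a dominant first-order expression in $(1-U_-+\hat p_0 e^{-\beta t})$ with a definite sign, modulo an error of order $e^{-\lambda_v(c_{uv}t-\zeta_0)}$. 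For $N_2$ in the same zones, expansion of $g$ produces a main term of order $(1+q)\hat q_0 e^{-\beta t}(V_++V_-)^2$ in competition with a cross-term $b(1+q)\hat p_0 e^{-\beta t}(V_++V_-)$ coming from the $\underline u$-dependence of $g(\underline u,\overline v)$; using $1-\delta\leq V_++V_-\leq 2$ turns the desired inequality into
\[
\hat q_0(1-\delta)^2>2b(1+\hat q_0)\hat p_0,
\]
which reduces to the stated hypothesis once $\delta$ is taken small, and delivers the required sign in \eqref{upper-ineq**} on these two zones.

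In the middle zone no reaction slope is available, so the argument rests entirely on the transient shift. There $|U'_++U'_-|$ and $V'_++V'_-$ are bounded below by positive constants uniformly in $t$, so $\zeta'(U'_++U'_-)$ in $N_1$ and $(1+q)\zeta'(V'_++V'_-)$ in $N_2$ are of order $\tfrac{\beta}{2}\zeta_1 e^{-\beta t/2}$, while every other term in $N_1,N_2$ is bounded by $C e^{-\beta t}+Ce^{-\lambda(c_{uv}t-\zeta_0)}$ with $\lambda=\min\{\lambda_u,\lambda_v\}$. Choosing $\beta<\min\{r,\lambda_u c_{uv},\lambda_v c_{uv}\}$ makes the $e^{-\beta t/2}$-rate strictly slower than every other decay, and taking $-\zeta_0$ large enough absorbs the spatial-decay errors into that leading term, so the shift dominates and \eqref{upper-ineq**} holds on this zone. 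Taking $T^{**}$ to be the maximum of the five thresholds produced by the three zones (one for each of $N_1,N_2$), together with the symmetric case $x\leq 0$, completes the argument.

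The main obstacle, exactly as in Lemma~\ref{lem:super-sol2}, is the middle zone: with no reaction-slope available to generate a definite sign, one must rely entirely on the $e^{-\beta t/2}$ shift supplied by $\zeta_1>0$, so $\beta$ has to be scheduled simultaneously against the $e^{-\beta t}$ rate of $p,q$ and against the two spatial-decay rates $\lambda_u c_{uv},\lambda_v c_{uv}$ coming from Lemma~\ref{lem:AS-}. The signs are then produced by the combined effect of the structural choice $\zeta_1>0$ and the amplitude condition $\hat q_0>2b(1+\hat q_0)\hat p_0$ in the end zones, and it is the interplay of these two ingredients that yields \eqref{upper-ineq**} uniformly on $[T^{**},\infty)\times\mathbb R$ for every $\zeta_0\leq 0$ once $\beta$ is sufficiently small.
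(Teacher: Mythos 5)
Your overall architecture is the same as the paper's: reduce to $x\ge 0$ by symmetry, use Lemma~\ref{lem:AS-} to get $1-U_-\le K_1e^{-\lambda_u(c_{uv}t-\zeta_0)}$ and $V_-\le K_2e^{-\lambda_v(c_{uv}t-\zeta_0)}$, split into the two end zones and a middle zone, run the mean-value/bistability argument for $N_1$ in the end zones, and let the shift term $\zeta'=\tfrac{\beta}{2}\zeta_1e^{-\beta t/2}$ carry the middle zone. However, there is a genuine gap in your treatment of $N_2$ in zone (ii) ($1-\delta\le U_+\le 1$, $0\le V_+\le\delta$). You transfer the zone-(i) mechanism --- the quadratic term $(1+\hat q)\hat q(V_++V_-)^2$ beating the cross term $b(1+\hat q)\hat p(V_++V_-)$ via the bound $V_++V_-\ge 1-\delta$ --- to \emph{both} end zones, but in zone (ii) one has $V_-\le V_+\le\delta$, so $V_++V_-\le 2\delta$ and that lower bound is false; after factoring out $V_++V_-$ the comparison becomes $\hat q(V_++V_-)$ versus $b(1+\hat q)\hat p$, which the hypothesis on $\hat q_0,\hat p_0$ does not settle. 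Moreover you never address the term $b(1+\hat q)V_-(1-U_+)$ coming from the $V_-\,b(U_+-1-\hat p)$ contribution in \eqref{N2-sub-super}: in zone (ii) $1-U_+$ is only $O(\delta)$, not small in $t$ (it is of order $\delta$ when $x$ is near $c_{uv}t$), while the only positive terms available are of size $O(\beta e^{-\beta t/2})(V_++V_-)$, so the crude bound $V_-(1-U_+)\le \delta V_-$ is \emph{not} dominated for large $t$. This is exactly where the paper's proof does its real work: it uses $V'_\pm\ge\kappa V_\pm$ so that $(1+\hat q)\zeta'(V'_++V'_-)\ge\kappa\zeta'(V_++V_-)$ supplies a positive term proportional to $V_++V_-$, and then splits zone (ii) at $x=c_{uv}t/2$, using the exact exponential rates of Lemma~\ref{lem:AS-} to bound $V_-(1-U_+)$ by $C\,V_-e^{-\lambda_u(c_{uv}t/2-\zeta_0)}$ when $0\le x\le c_{uv}t/2$ and by $C\,V_+e^{-c_{uv}\lambda_4 t}$ (via the ratio $V_-/V_+$) when $c_{uv}t/2\le x\le c_{uv}t+O(1)$; both are then beaten by the $e^{-\beta t/2}$ shift term for $\beta$ small. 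Without some version of this two-subregion estimate your zone-(ii) inequality for $N_2$ does not close.

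Two smaller points. First, the conclusion must hold with $T^{**}$ uniform over all $\zeta_0\le 0$, so you cannot ``take $-\zeta_0$ large enough'' in the middle zone; what saves you is simply that $e^{-\lambda(c_{uv}t-\zeta_0)}\le e^{-\lambda c_{uv}t}$ for $\zeta_0\le 0$, combined with choosing $\beta/2<\min\{\lambda_uc_{uv},\lambda_vc_{uv}\}$, so the worst case is $\zeta_0=0$ and no restriction on $\zeta_0$ is needed. Second, for the subsolution pair the inequalities one actually proves (and needs for the comparison principle as defined in Section 2) are $N_1[\underline u,\overline v]\le 0$ and $N_2[\underline u,\overline v]\ge 0$; the signs displayed in \eqref{upper-ineq**} are reversed, and your write-up should commit explicitly to the subsolution orientation rather than leaving the sign implicit.
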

}
{\begin{proof} Since the proof is similar to that of Lemma~\ref{lem:super-sol2},
we give some sketch of the proof, but give the details for the different parts.

For notational convenience, we also denote
$\xi_{\pm}=\pm x-c_{uv}t+\zeta(t)$ and $(U_{\pm},V_{\pm})=(U(\xi_{\pm}),V(\xi_{\pm}))$.
We first consider $\underline{u}>0$. After some direct computation (similar to that of Lemma~\ref{lem:super-sol2}, but replace $\hat{p}$ by $-\hat{p}$ and
$-\hat{q}$ by $\hat{q}$), we obtain
\bea\label{N1-sub-super}
N_1[\underline{u},\overline{v}]
&=&\zeta'(t)(U'_{+}+U'_{-})-\hat{p}'+f(U_+,V_+)+f(U_-,V_-)\\
&&\quad-f(U_{+}+U_{-}-1-\hat{p},V_+)+f(U_{+}+U_{-}-1-\hat{p},V_+)\notag\\
&&\quad-f(U_{+}+U_{-}-1-\hat{p},(1+\hat{q})(V_{+}+V_{-})),\notag
\eea
where $f(u,v)=ru(1-u-av)$.
Similarly,  we have
\bea\label{N2-sub-super}
\ \ \ N_2[\underline{u},\overline{v}]
                               &=&\hat{q}'(t)(V_{+}+V_{-})+(1+\hat{q})\zeta'(t)(V'_{+}+V'_{-})\\
                                   &&\qquad+(1+\hat{q})V_+[V_- +\hat{q}(V_+ +V_-)+b(U_--1-\hat{p})]\notag\\
                                   &&\qquad +(1+\hat{q})V_-[V_++\hat{q}(V_++V_-)+b(U_+-1-\hat{p})],\notag
\eea
where $g(u,v)=v(1-v-bu)$.

We shall show that $N_1[\underline{u},\overline{v}]\geq0$ and $N_2[\underline{u},\overline{v}]\leq0$ for all $x\in\mathbb{R}$ and $t\gg1$.
By symmetry, we only consider the range $x\geq0$.
Let us fix $\zeta_1>0$ such that $\zeta'>0$. Since $x\geq0$, $U'<0$ and $\zeta'<0$,
it follows from Lemma~\ref{lem:AS-} that
\bea\label{key est:1-U-}
1-U_-\leq 1-U(-c_{uv}t+\zeta_0)\leq K_1e^{-\lambda_{u}(c_{uv}t-\zeta_0)}\quad \mbox{for all $x\geq0$ and $t\geq0$},
\eea
where $\lambda_u$ and $K_1$ are positive constants independent of all $\zeta_0\leq0$.
Similarly, there exist two constants $\lambda_v>0$ and $K_2>0$ (independent of all
$\zeta_0\leq0$) such that
\bea\label{key est:V-}
V_-\leq K_2e^{-\lambda_{v}(c_{uv}t-\zeta_0)}\quad \mbox{for all $x\geq0$ and $t\geq0$.}
\eea

As in the proof of Lemma~\ref{lem:super-sol2}, we divide the discussion into three cases:

{\bf Case 1:} $0\leq U_+\leq \delta$ and $1-\delta\leq V_+\leq 1$ for some small $\delta>0$.
Since $\delta$ is sufficiently small, as in the proof of Case (i) in Lemma~\ref{lem:super-sol2},
there exists $m_1>1$ such that
\bea\label{f1'}
f(U_+,V_+)-f(U_{+}+U_{-}-1-\hat{p},V_+)\leq -m_1(\hat{p}+1-U_-)\leq -m_1\hat{p}
\eea
for $0\leq U_+\leq \delta$ and $1-\delta\leq V_+\leq 1$. Also, by some simple computations, it holds that
\beaa
&&f(U_{+}+U_{-}-1-\hat{p},V_+)-f(U_{+}+U_{-}-1-\hat{p},(1+\hat{q})(V_++V_-))\\
&&=ra(U_{+}+U_{-}-1-\hat{p})[\hat{q}V_{+}+(1+\hat{q})V_{-}]\notag\\
&&\leq raU_{+}[\hat{q}V_{+}+(1+\hat{q})V_{-}].\notag
\eeaa
With the range of $U_+$ and $V_+$ in Case 1, by \eqref{key est:V-}, we further have
\bea\label{f2'}
&&f(U_{+}+U_{-}-1-\hat{p},V_+)-f(U_{+}+U_{-}-1-\hat{p},(1+\hat{q})(V_++V_-))\\
&&\leq ra\delta \hat{q} +ra\delta(1+\hat{q})K_2e^{-\lambda_{v}(c_{uv}t-\zeta_0)}.\notag
\eea
Also, in view of \eqref{key est:1-U-}, one has
\bea\label{f3'}
f(U_-,V_-)\leq rU_-(1-U_-)\leq r  K_1e^{-\lambda_{u}(c_{uv}t-\zeta_0)}.
\eea
Combining \eqref{f1'}, \eqref{f2'}, \eqref{f3'} and the fact $U'_{\pm}\zeta'>0$,  we see from \eqref{N1-sub-super}
that
\beaa
N_1[\underline{u},\overline{v}]&\leq& (\beta\hat{p}_0-m_1\hat{p}_0+ra\delta\hat{q}_0)e^{-\beta t}\\ &&+\delta(1+\hat{q}_0)K_2e^{-\lambda_{v}(c_{uv}t-\zeta_0)}+r K_1e^{-\lambda_{u}(c_{uv}t-\zeta_0)}
\eeaa
for $0\leq U_+\leq \delta$ and $1-\delta\leq V_+\leq 1$.
Therefore, there exists $T_1\gg1$ such that $N_1[\underline{u},\overline{v}]\leq0$ for $x\geq0$ and $t\geq T_1$ within the range in Case 1, provided $\beta>0$ and $\delta>0$ are sufficiently small.

We next deal with the inequality of $N_2[\underline{u},\overline{v}]$. Since $\zeta'V'_{\pm}>0$, from  \eqref{N2-sub-super} one has
\beaa
N_2[\underline{u},\overline{v}]&\geq&  \hat{q}'(V_{+}+V_{-})+(1+\hat{q})\hat{q}(V_+ +V_-)^2
-b(1+\hat{q})V_+[(1-U_-)+\hat{p}]\\
&&-b(1+\hat{q})(1+\hat{p})V_-\\
&\geq& 2\hat{q}'+(1+\hat{q})\hat{q}(1-\delta)^2-b(1+\hat{q}_0)[K_1e^{-\lambda_{u}(c_{uv}t-\zeta_0)}+\hat{p}+(1+\hat{p})K_2e^{-\lambda_{v}(c_{uv}t-\zeta_0)}]\\
&=& e^{-\beta t}[\hat{q}_0(1-\delta)^2-2\beta \hat{q}_0-b(1+\hat{q}_0)\hat{p}_0]\\
&&-b(1+\hat{q}_0)[K_1e^{-\lambda_{u}(c_{uv}t-\zeta_0)}+(1+\hat{p}_0)K_2e^{-\lambda_{v}(c_{uv}t-\zeta_0)}]
\eeaa
where we have used $1-\delta \leq V_{+}\leq1$, $0\leq V_{-}\leq1$, \eqref{key est:1-U-} and \eqref{key est:V-}.
Then one can find $T_2\gg1$ such that $N_2[\underline{u},\overline{v}]\geq0$ for $x\geq0$ and $t\geq T_2$ within the range in Case 1,
provided $\delta>0$ is chosen small enough and
\beaa
0<\beta<\min\{\lambda_{u}c_{uv},\lambda_{v}c_{uv}, 1/4\},\quad \hat{q}_0>2\beta\hat{q}_0+b(1+\hat{q}_0)\hat{p}_0.
\eeaa

{\bf Case 2:} $1-\delta\leq U_+\leq 1$ and $0\leq V_+\leq \delta$ for some small $\delta>0$. Since
there exists $m_2>0$ such that $(\partial f/\partial u)(u,v)=r(1-2u-av)<-m_2$ for $1-\delta\leq u\leq 1$ and $0\leq v\leq\delta$,
one can apply the same argument in Case 1 to deduce that for some large $T_3>0$, $N_1[\underline{u},\overline{v}]\leq0$ for $t\geq T_3$
as long as  $\beta>0$ and $\delta>0$ are small enough. We omit the details here.

We now show $N_2[\underline{u},\overline{v}]\leq0$. Observe that $V_-\leq V_+\leq\delta$ when $x\geq0$ and recall that $\zeta'>0$. Thus,
one can find $\kappa>0$ such that $V'_{\pm}\geq \kappa V_{\pm}$ and so
\beaa
(1+\hat{q})\zeta'(V'_++V'_-)\geq \kappa\zeta'(V_++V_-).
\eeaa
From  \eqref{N2-sub-super} and using estimates \eqref{key est:1-U-} and \eqref{key est:V-} we have
\bea\label{N2-case2}
&&N_2[\underline{u},\overline{v}]\\
&&\geq \hat{q}'(V_{+}+V_{-})+\kappa\zeta'(V_{+}+V_{-})+b(1+\hat{q})[V_+(U_- -1-\hat{p})+V_-(U_+ -1-\hat{p})]\notag
\\
&&\geq V_+[\hat{q}'+\kappa\zeta'-b(1+\hat{q}_0)K_1e^{-\lambda_{u}(c_{uv}t-\zeta_0)}-b(1+\hat{q}_0)\hat{p}]
\notag\\
&&\qquad+V_-[\hat{q}'+\kappa\zeta'-b(1+\hat{q}_0)\hat{p}]-b(1+\hat{q}_0)V_-(1-U_+)\notag\\
&&= V_+e^{-(\beta/2)t}\Big[-\beta\hat{q}_0e^{-(\beta/2)t}+\kappa\frac{\beta}{2}\zeta_1-K'e^{-\lambda_{u}(c_{uv}t-\zeta_0)+(\beta/2)t}-b(1+\hat{q}_0)\hat{p}_0 e^{-(\beta/2)t}\Big]\notag\\
&&\qquad+V_-e^{-(\beta/2)t}\Big[-\beta\hat{q}_0e^{-(\beta/2)t}+\kappa\frac{\beta}{2}\zeta_1-b(1+\hat{q}_0)\hat{p}_0 e^{-(\beta/2)t}\Big]-b(1+\hat{q}_0)V_-(1-U_+)\notag
\eea
where $K':=b(1+\hat{q}_0)K_1$.

To obtain $N_2[\underline{u},\overline{v}]\leq0$, we need to estimate the last term $b(1+\hat{q}_0)V_-(1-U_+)$.
For this, we observe that over this range, $1-\delta\leq U_+\leq1$, we must have $x-c_{uv}t+\zeta(t)\leq -N_{\delta}$ for some $N_\delta>0$
since $U'<0$ and $U(+\infty)=0$. This means that $x-c_{uv}t=O(1)$. If $x\in[0,c_{uv}t/2]$, we apply Lemma~\ref{lem:AS-} to ensure that
\bea\label{key-case2-1}
b(1+\hat{q}_0)V_-(1-U_+)\leq C_1V_-e^{\lambda_{u}(c_{uv}t/2-c_{uv}t+\zeta_0)}= C_1 V_-e^{-\lambda_{u}(c_{uv}t/2-\zeta_0)}
\eea
for some $C_1>0$ and $\lambda_u>0$. If $c_{uv}t/2\leq x\leq c_{uv}t+O(1)$, by Lemma~\ref{lem:AS-} again, we obtain
\beaa
\qquad V_-(1-U_+)\leq \delta V_-=\delta \frac{V_-}{V_+}V_+
\leq \delta\frac{V(-c_{uv}t/2-c_{uv}t+\zeta(t))}{V( c_{uv}t/2-c_{uv}t+\zeta(t))}V_+\leq C_2e^{-c_{uv}\lambda_{4}t}V_+,
\eeaa
where $\lambda_4>0$ given in Lemma~\ref{lem:AS-} and $C_2$ is a positive constant. Therefore, if $c_{uv}t/2\leq x\leq c_{uv}t+O(1)$,
we have
\bea\label{key-case2-2}
b(1+\hat{q}_0)V_-(1-U_+)\leq C_3e^{-c_{uv}\lambda_{4}t}V_+
\eea
for some $C_3>0$.

Combining \eqref{N2-case2}, \eqref{key-case2-1} and \eqref{key-case2-2}, we obtain that
for some large $T_4>0$, $N_2[\underline{u},\overline{v}]\leq0$ for $t\geq T_4$ as long as $\beta>0$ is chosen sufficiently small.

{\bf Case 3:} {the middle part: $\delta_1\leq U_+,V_+\leq1-\delta_2$ for some small $\delta_i>0$ for $i=1,2$.}
In this range, one can follow the similar process to that of Case 3 in Lemma~\ref{lem:super-sol2} to ensure that
there exists $T_5\gg1$ such that $N_1[\underline{u},\overline{v}]\leq0$ and $N_2[\underline{u},\overline{v}]\geq0$
for all $t\geq T_5$, provided $\beta>0$ is chosen sufficiently small. We omit the details here.

From the above discussion, taking $T^{**}=\max\{T_1,T_2,T_3,T_4,T_5\}$, we have shown that
there exists some small $\beta^{**}>0$ such that
$N_1[\underline{u},\overline{v}]\leq0$ and $N_2[\underline{u},\overline{v}]\geq0$ for $x\in\mathbb{R}$ and $t\geq T^{**}$ if $\underline{u}>0$,
provided $\beta\in(0, \beta^{**})$ and $\hat{q}_0>2b(1+\hat{q}_0)\hat{p}_0$. When $\underline{u}=0$, it is not hard to see that the above conclusion
still holds and we omit the details here.
This completes the proof.
\end{proof}
}

{
\begin{lemma}\label{lem:order-4}
Let $\underline{u}$ and $\overline{v}$ be defined in \eqref{subsol-2}.
Then there exist $\beta, \hat{p}_0, \hat{q}_0>0$, $\zeta_0\in\mathbb{R}$, $\zeta_1<0$ large $T^{**},\widehat{T}>0$ such that the solution $(u,v)$
of \eqref{LV-sys} and \eqref{LV-ic} {\rm(}with {\bf(A1)} or {\bf(A2)}{\rm)}  satisfies
\beaa
u(t+\widehat{T},x)\geq\underline{u}(t,x),\quad v(t+\widehat{T},x)\leq \overline{v}(t,x)\quad \mbox{for $t\geq T^{**}$ and $x\in\mathbb{R}$}.
\eeaa
\end{lemma}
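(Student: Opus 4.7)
The plan is to combine Lemma~\ref{lem:subsol-2}, which yields the differential inequalities satisfied by $(\underline u,\overline v)$, with hypothesis \textbf{(H3)} to verify the initial comparison at the base time $t=T^{**}$, and then conclude via the comparison principle (Lemma~\ref{CP}). First I would fix $\beta,\hat p_0,\hat q_0>0$ with $\hat q_0>2b(1+\hat q_0)\hat p_0$, fix $\zeta_1>0$, and impose additionally $\beta<1$; Lemma~\ref{lem:subsol-2} then produces $T^{**}>0$, independent of $\zeta_0\le 0$, for which
\[
N_1[\underline u,\overline v]\le 0,\qquad N_2[\underline u,\overline v]\ge 0\qquad\text{on }[T^{**},\infty)\times\mathbb R.
\]
It thus suffices to exhibit $\zeta_0\le 0$ and $\widehat T>0$ realizing $u(T^{**}+\widehat T,\cdot)\ge \underline u(T^{**},\cdot)$ and $v(T^{**}+\widehat T,\cdot)\le \overline v(T^{**},\cdot)$ on $\mathbb R$.

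Next, I would fix any $\zeta_0\le 0$ and record the following structural features at $t=T^{**}$. Since $U(-\infty)=1$, $U(+\infty)=0$ and $U\le 1$, the function $\underline u(T^{**},\cdot)$ vanishes outside some bounded interval $[-L_0,L_0]$ (with $L_0$ depending on $\zeta_0,T^{**}$) and satisfies $\underline u(T^{**},\cdot)\le 1-\hat p_0 e^{-\beta T^{**}}$ everywhere. Using $V(+\infty)=1$ and $V>0$, I choose $L\ge L_0$ large enough that $\overline v(T^{**},x)\ge 1+\tfrac12\hat q_0 e^{-\beta T^{**}}$ for $|x|\ge L$; by compactness and positivity of $V$, the quantity $m_0:=\min_{|x|\le L}\overline v(T^{**},x)$ is strictly positive.

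Invoking \textbf{(H3)} on the compact set $[-L,L]$, there exists $\widehat T_1>0$ such that for all $|x|\le L$ and $t\ge T^{**}+\widehat T_1$,
\[
u(t,x)\ge 1-\hat p_0 e^{-\beta T^{**}}\qquad\text{and}\qquad v(t,x)\le m_0/2.
\]
Combined with the profile information above, this gives $u\ge \underline u$ on $[-L,L]$ (either $\underline u=0$, or $\underline u\le 1-\hat p_0 e^{-\beta T^{**}}\le u$) and $v\le m_0/2\le m_0\le \overline v$ on $[-L,L]$. For $|x|>L$, the support condition forces $\underline u=0\le u$ trivially, while Lemma~\ref{lem:simple est} gives $v(T^{**}+\widehat T,x)\le 1+Me^{-(T^{**}+\widehat T)}$; since $\beta<1$, a further threshold $\widehat T_2$ ensures $Me^{-(T^{**}+\widehat T)}\le \tfrac12\hat q_0 e^{-\beta T^{**}}$, whence $v\le \overline v$ on $|x|>L$ as well. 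Setting $\widehat T:=\max\{\widehat T_1,\widehat T_2\}$ closes the initial-data comparison, and the comparison principle yields the desired ordering for all $t\ge T^{**}$.

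The main obstacle is the middle region where $\overline v$ may be arbitrarily small while $\underline u$ is still nontrivial; the key point is that \textbf{(H3)} delivers locally \emph{uniform} convergence $(u,v)\to(1,0)$ on the entire compact set $[-L,L]$, so that after a sufficiently long wait $\widehat T$, $v$ can be driven below the fixed, strictly positive lower bound $m_0$ of $\overline v$ on this set, giving pointwise domination even where $\overline v$ degenerates.
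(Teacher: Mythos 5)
Your argument is correct and follows the same route as the paper: invoke Lemma~\ref{lem:subsol-2} to fix $T^{**}$ and the parameters, split $\mathbb{R}$ into $[-L,L]$ and $|x|>L$, use \textbf{(H3)} on the compact part and Lemma~\ref{lem:simple est} on the tails to push $(u,v)$ past $(\underline u,\overline v)(T^{**},\cdot)$ after a time shift $\widehat T$, then close via the comparison principle. One small point in your favor: you take $\zeta_1>0$, which is the sign actually required by Lemma~\ref{lem:subsol-2} (the statement of Lemma~\ref{lem:order-4} writes $\zeta_1<0$, an apparent typo in the paper).
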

}
{\begin{proof}
By Lemma~\ref{lem:subsol-2}, one can choose suitable $\beta, \hat{p}_0, \hat{q}_0>0$, $\zeta_0\in\mathbb{R}$, $\zeta_1<0$ and $T^{**}>0$ such that $N_1[\underline{u},\overline{v}]\leq0$ and $N_2[\underline{u},\overline{v}]\geq0$ for $x\in\mathbb{R}$ and $t\geq T^{**}$. Note that $\overline{v}(T^{**},+\infty)=1+\hat{q}_0e^{-\beta T^{**}} $ and $\underline{u}(T^{**},x)=0$ for all $|x|\gg1$ (this is because $\beta>0$ is chosen sufficiently small).
One can take large $L>0$ such that
\bea\label{super-sub-ic1}
\overline{v}(T^{**},x)\geq 1+\frac{\hat{q}_0}{2}e^{-\beta T^{**}} ,\quad \underline{u}(T^{**},x)=0\quad \mbox{for all $|x|\geq L$}.
\eea
From the definition of $(\underline{u},\overline{v})$, it is obvious that
\bea\label{super-sub-ic2}
\max_{x\in[-L,L]}\underline{u}(T^{**},x)\leq 1-\hat{p}(T^{**}),\quad A:=\min_{x\in[-L,L]}\overline{v}(T^{**},x)>0.
\eea

Let us fix such $T^{**}$ and $L$.
In view of Lemma~\ref{lem:simple est} (if necessary we take a smaller $\beta$) and \eqref{super-sub-ic1},
we can find $T_1>0$ such that
\bea\label{super-sub-ic3}
v(t+T^{**},x)\leq 1+(\hat{q}_0/2)e^{-\beta T^{**}}\leq \overline{v}(T^{**},x)
\eea
for all $t\geq T_1$ and $|x|\geq L$.

Inside the region $[-L,L]$,  {\bf (H3)} and \eqref{super-sub-ic2} yield that for some large $T_2>0$,
\bea\label{super-sub-ic4}
u(t+T^{**},x)\geq 1-\hat{p}(T^{**})\geq \underline{u}(T^{**},x),\quad  v(t+T^{**},x)\leq A\leq \overline{v}(T^{**},x)
\eea
for all $t\geq T_2$ and $x\in[-L,L]$.

Let us set $\widehat{T}:=\max\{T_1,T_2\}$. Combining \eqref{super-sub-ic1}, \eqref{super-sub-ic3} and \eqref{super-sub-ic4}, we have
\beaa
u(\widehat{T}+T^{**},x)\geq \underline{u}(T^{**},x),\quad v(\widehat{T}+T^{**},x)\leq \overline{v}(T^{**},x) \quad \mbox{for all  $x\in\mathbb{R}$}.
\eeaa
Therefore, one can compare $(u,v)(t+\widehat{T},\cdot)$ with $(\underline{u},\overline{v})(t,\cdot)$ from $t=T^{**}$ and thus the proof of Lemma~\ref{lem:order-4} is complete.
\end{proof}
}

{
\begin{remark}\label{rk-H3}
From the proof of Lemma~\ref{lem:order-4}, we see that
if $(u_0,v_0)$ satisfies
\beaa
u_0(x)\geq \underline{u}(T^{**},x),\quad v_0(x)\leq \overline{v}(T^{**},x),\quad \forall\ x\in\mathbb{R},
\eeaa
then $u(t,x)\geq \underline{u}(t+T^{**},x)$ and $v(t,x)\leq \overline{v}(t+T^{**},x)$ for $t\geq0$ and $x\in\mathbb{R}$,
which establish the successful invasion of $u$.
Roughly speaking, $u_0$ needs to have sufficiently large support, and the amplitude of
$u_0$ require to be large in the support; while the amplitude of $v_0$ cannot be too large.
This provides some initial functions which satisfy {\bf(A1)} or {\bf(A2)} such that
{\bf(H3)} holds.
\end{remark}
}

\medskip

{
We are now able to prove Theorem~\ref{thm1} by applying the same argument used in the previous section.
}

{
\begin{proof}[Proof of Theorem~\ref{thm1}]
Set $\xi=x-c_{uv}t$ with $x\geq0$. Define the solution of \eqref{LV-sys} and {\eqref{LV-ic}}
as
\beaa
(\hat{u},\hat{v})(t,\xi)=({u},{v})(t,x)=({u},{v})(t,\xi+c_{uv}t),\quad t>0,\ \xi\geq -c_{uv}t.
\eeaa
Then $(\hat{u},\hat{v})$ satisfies the system \eqref{LV-sys-moving}.
In view of Lemma~\ref{lem:order-3} and Lemma~\ref{lem:order-4}, one has the following result:
for some suitable $\zeta^{*}_i$, $\zeta^{**}_i$ $(i=0,1)$, $\hat{p}^{*}_0$, $\hat{p}^{**}_0$, $\hat{q}^{*}_0$, $\hat{q}^{**}_0$,
$\beta$ and large $T>0$,
\beaa
&&U(\xi+\zeta^{*}_0-{\zeta}^{*}_1 e^{-(\beta/2)t})+U(-c_{uv}t+\zeta^{*}_0-{\zeta}^{*}_1 e^{-(\beta/2)t})-1-p^{*}_0e^{-\beta t}\\
&&\qquad\leq \hat{u}(t,\xi) \leq U(\xi+\zeta^{**}_0-{\zeta}^{**}_1 e^{-(\beta/2)t})+p^{**}_0e^{-\beta t},
\eeaa
and
\beaa
&&(1-q^{**}_0e^{-\beta t})V(\xi+\zeta_0^{**}-{\zeta}_1^{**} e^{-(\beta/2)t})\\
&&\qquad\leq \hat{v}(t,\xi)\leq (1+q^{*}_0e^{-\beta t})[V(\xi+\zeta_0^{*}-{\zeta}_1^{*} e^{-(\beta/2)t})+V(-c_{uv}t+\zeta_0^{**}-{\zeta}_1^{**} e^{-(\beta/2)t})]
\eeaa
for all $t\geq T$ and $\xi\geq -c_{uv}t$.
With the above inequalities, one can follow the same line as that in the proof of Proposition~\ref{prop1} to conclude Theorem~\ref{thm1}.
\end{proof}
}

\section{Proof of Theorems \ref{thm2} and \ref{thm3}: scenario {\bf(A2)}}\label{sec:A2}
\setcounter{equation}{0}

In this section, we prove Theorems \ref{thm2} and \ref{thm3}; unless otherwise specified, it is assumed that $(u_0,v_0)$ satisfies {\bf(A2)} throughout this section.

\subsection{The proof of Theorem~\ref{thm2}}

Let $(u,v)$ be the solution of \eqref{LV-sys}-\eqref{LV-ic}. Given $m\in(0,1)$, we define
$E_m(t)$ as the set of points in $(0,\infty)$ such that $u(t,\cdot)=m$. Namely,
\beaa
E_m(t)=\{x>0:\ \,u(t,x)=m\}.
\eeaa

\begin{lemma}\label{lem:upper-E}
For any $m\in(0,1)$, there exist $M>0$ and $T>0$ such that
\beaa
\max E_m(t)\leq c_u t-{\frac{3{d}}{c_u}}\ln t+M,\ \ \forall t\geq T.
\eeaa
\end{lemma}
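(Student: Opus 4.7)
The plan is to obtain this upper bound by comparing $u$ with a pure Fisher-KPP solution and then invoking the sharp Bramson-type logarithmic correction established in \cite{HNRR}.

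First, I would observe that since $v(t,x)\geq 0$ and $a>0$, the first equation of \eqref{LV-sys} gives the differential inequality
\[
u_t=du_{xx}+ru(1-u-av)\leq du_{xx}+ru(1-u),\qquad t>0,\ x\in\mathbb{R}.
\]
Hence $u$ is a subsolution of the Fisher-KPP equation $w_t=dw_{xx}+rw(1-w)$. Let $w$ denote the solution of this scalar equation with the same initial datum $w(0,x)=u_0(x)$. Since $u_0$ is nonnegative, nontrivial and compactly supported under \textbf{(A2)}, the comparison principle yields $u(t,x)\leq w(t,x)$ for all $t\geq 0$ and $x\in\mathbb{R}$.

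Next, I would apply the classical Bramson correction for the Fisher-KPP equation starting from compactly supported data. The result of \cite{HNRR} (adapted to the coefficients $d$ and $r$ via the standard scaling $\tilde{w}(t,x)=w(t/r,x\sqrt{d/r})$, which turns the equation into $\tilde{w}_{\tilde{t}}=\tilde{w}_{\tilde{x}\tilde{x}}+\tilde{w}(1-\tilde{w})$) gives that the level sets of $w$ are located at $c_u t-\tfrac{3d}{c_u}\ln t+O(1)$. More precisely, for any $m\in(0,1)$, there exist $M>0$ and $T>0$ such that
\[
w(t,x)<m\qquad \text{for all }t\geq T\text{ and }x\geq c_ut-\tfrac{3d}{c_u}\ln t+M.
\]
Combining this with $u\leq w$ gives $u(t,x)<m$ on the same region, so every element of $E_m(t)$ must lie to the left of $c_ut-\tfrac{3d}{c_u}\ln t+M$, which is exactly the assertion of the lemma.

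The only subtlety is making sure the HNRR estimate is stated with the precise coefficient $3d/c_u$ rather than the normalized value $3/2$; this is a direct consequence of the scaling above, since under $(t,x)\mapsto(rt,\sqrt{r/d}\,x)$ one has $c_u=2\sqrt{rd}\mapsto 2$ and $\tfrac{3}{2}\ln\tilde{t}$ translates back to $\tfrac{3d}{c_u}\ln t+O(1)$. The genuine work, namely the logarithmic delay itself, is entirely imported from \cite{HNRR}; the role of this lemma is only to transfer it to the competitive system through the one-sided comparison with Fisher-KPP. I do not anticipate a serious obstacle beyond correctly tracking the scaling constants.
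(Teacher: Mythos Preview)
Your proposal is correct and follows essentially the same approach as the paper: both compare $u$ with the solution $\bar u$ (your $w$) of the pure Fisher--KPP equation with the same initial datum $u_0$, obtain $u\leq\bar u$ by the comparison principle, and then invoke the Bramson correction from \cite{HNRR} to locate the level sets of $\bar u$. The only cosmetic difference is that the paper identifies the constant $3d/c_u$ via the double root $\lambda_u=c_u/(2d)$ of the characteristic equation, whereas you recover it through the scaling $(t,x)\mapsto(rt,\sqrt{r/d}\,x)$; both are valid and yield the same coefficient.
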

\begin{proof}
Let $\bar{u}$ be the solution of the problem
$$\bar{u}_t=d \bar{u}_{xx}+r(1-\bar{u})\bar{u}\ \ \mbox{in}\ (0,\infty)\times\mathbb{R}; \ \ \bar{u}(0,\cdot)=u_0.$$
From \cite{Bramson} or \cite[{Theorem} 1.1]{HNRR}, we see that there exist $M>0$ and $T>0$ such that
{ \beaa
\bar{E}_m(t)\subset\left[c_u t-{\frac{3}{2\lambda_u}}\ln t-M,\ c_u t-{\frac{3}{2\lambda_u}}\ln t+M\right]\quad \mbox{for all $t\geq T$},
\eeaa
where $\bar{E}_m(t)=\{x>0:\ \,\bar{u}(t,x)=m\}$  and $\lambda_u$ is the (double) root of the
characteristic equation
$d\lambda^2-c_u\lambda+r=0$. That is, $\lambda_u=c_u/2d$. Therefore, we have}
\bea\label{E-bar}
\bar{E}_m(t)\subset\left[c_u t-\frac{3{d}}{c_u}\ln t-M,\ c_u t-\frac{3{d}}{c_u}\ln t+M\right]\quad \mbox{for all $t\geq T$},
\eea

Since $\bar{u}_t\geq d \bar{u}_{xx}+r(1-\bar{u}-av)\bar{u}$ in $(0,\infty)\times\mathbb{R}$, one can apply the comparison principle to deduce $\bar{u}\geq u$, which implies that $\max E_m(t)\leq \max \bar{E}_m(t)$ for $t\geq T$. Using \eqref{E-bar}, we thus complete the proof.
\end{proof}

\begin{lemma}\label{lem:v-vanishing}
Assume that $c_u>c_v$. Then there exist $C,\mu, T>0$ such that
\beaa
\sup_{x\in\mathbb{R}^+}v(x,t)\leq C e^{-\mu t},\quad \forall t\geq T.
\eeaa
\end{lemma}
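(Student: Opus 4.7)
The plan is to cover the half-line $\mathbb{R}^+$ by two overlapping zones determined by an intermediate speed and invoke a distinct decay estimate on each. Pick any speed $c$ with $c_v<c<c_u$; this is possible precisely because we are assuming $c_u>c_v$. The inner zone will be $x\in[0,ct]$ and the outer zone $x\ge ct$. On the overlap $x=ct$ both estimates will apply, so the two bounds patch together to yield a single exponential.

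For the inner zone $0\le x\le ct$, Corollary~\ref{cor:v-rate} applies directly since $c<c_u$: there exist constants $\rho_1,M_1,T_1>0$ such that $v(t,x)\le M_1 e^{-\rho_1 t}$ for every $t\ge T_1$ and $x\in[-ct,ct]$, in particular for $x\in[0,ct]$.

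For the outer zone $x\ge ct$, I would build the analogue of Lemma~\ref{lem:exp-decay} for $v$. This is the step that requires a small argument, but it is routine rather than genuinely difficult: dropping the nonnegative competition term $b u v$ in the second equation of \eqref{LV-sys} gives $v_t\le v_{xx}+v(1-v)$, so $v$ is bounded above by the solution of the scalar Fisher--KPP equation with diffusion $1$, growth $1$ and hence spreading speed $c_v=2$. Under hypothesis {\bf(A2)} the initial datum $v_0$ has compact support, so one can repeat verbatim the argument of Lemma~\ref{lem:exp-decay}, constructing the supersolution $\overline{v}(t,x)=M V_{KPP}(x-c_v t)$ with $M$ large enough to dominate $v_0$, and using the well-known asymptotics $V_{KPP}(\xi)\sim C\xi e^{-(c_v/2)\xi}$ as $\xi\to\infty$. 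This produces constants $\mu_2,M_2,T_2>0$ with
\[
v(t,x)\le M_2 e^{-\mu_2(c-c_v)t},\qquad x\ge ct,\ t\ge T_2.
\]

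Finally, setting $T:=\max\{T_1,T_2\}$, $\mu:=\min\{\rho_1,\mu_2(c-c_v)\}>0$ and $C:=M_1+M_2$, the two estimates combine to give $\sup_{x\ge 0}v(t,x)\le Ce^{-\mu t}$ for all $t\ge T$, completing the proof. The only conceptual ingredient is the choice of an intermediate speed separating the strong-competition decay regime (inside $[-c_ut,c_ut]$, where Corollary~\ref{cor:v-rate} takes over) from the Fisher--KPP-type spreading regime of $v$ alone (outside $[-c_vt,c_vt]$); the gap $c_u>c_v$ is exactly what makes such a separating speed available, so this is where the hypothesis is essential.
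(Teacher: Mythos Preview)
Your proof is correct and follows essentially the same approach as the paper: choose an intermediate speed $c\in(c_v,c_u)$, apply the $v$-analogue of Lemma~\ref{lem:exp-decay} (using that $v_0$ has compact support under {\bf(A2)}) on $x\ge ct$, apply Corollary~\ref{cor:v-rate} on $0\le x\le ct$, and take the worse of the two exponential rates. The paper's own proof is terser but identical in structure.
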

\begin{proof}
Since $v_0$ is of compact support,
by the proof of Lemma~\ref{lem:exp-decay} (just exchanging the role of $u$ and $v$), we have the following result:
if $c>c_v:=2$, then there exist $M,{\mu'}>0$ and $T\gg1$ such that
\beaa
v(t,x)\leq M e^{-{\mu'} ({c-2}\,)t}\quad\mbox{for all $t\geq T$ and $x>ct$}.
\eeaa
Together with Corollary~\ref{cor:v-rate}, we thus complete the proof.
\end{proof}

We next derive a lower estimate of $\min E_m(t)$. For our purpose, consider
\bea\label{subsol-eq}
\underline{u}_t=d \underline{u}_{xx}+\underline{u}(r-r\underline{u}-C_0 e^{-\mu t})\quad \mbox{in $(0,\infty)\times\mathbb{R}$},
\eea
where $C_0:=raC$, where $\mu,C>0$ is defined in Lemma~\ref{lem:v-vanishing}.
We shall apply the method developed by  Hamel, Nolen, Roquejoffre, and Ryzhik \cite{HNRR} to estimate $\min E_m(t)$. To do so, we consider the linearized equation of \eqref{subsol-eq}
with the Dirichlet boundary condition along a suitable curve $x=X(t)$. Namely,
\bea\label{linearized-subeq}
w_t=dw_{xx}+w(r-C_0 e^{-\mu t})\quad \mbox{in $(0,\infty)\times(X(t),\infty)$ with $w(t,X(t))=0$},
\eea
where  $w(0,\cdot)=w_0\geq {(\not\equiv)0}$ in $(0,\infty)$ and is of compact support.

Motivated by \cite{HNRR}, we define
\beaa
X(t):=c_u t-\frac{3d}{c_u}\ln(t+t_0),\quad z(t,x')=w(t,x),\quad x'=x-X(t),
\eeaa
where $t_0>0$ will be determined later. After some simple calculations and dropping the prime sign, \eqref{linearized-subeq} becomes
\bea\label{z-eq}
z_t=d z_{xx}+\left[c_u-\frac{3d}{c_u(t+t_0)}\right]z_x+(r-C_0 e^{-\mu t})z\quad \mbox{in $(0,\infty)\times(0,\infty)$},
\eea
where $z(t,0)=0$ and $z(0,\cdot)=z_0\geq {(\not\equiv)0}$ in $(0,\infty)$ and is of compact support.

\medskip

We shall prove that $z(t,x)$  has both positive upper and lower bounds over $[1,\infty)\times[a,b]$ for any given $0<a<b<\infty$ using the argument of
\cite[Lemma 2.1]{HNRR}. To {this}  end, we need the following lemma given in \cite{HNRR}.

\begin{lemma}\label{lem:HNRR}{\rm(}\cite[Lemma 2.2]{HNRR}{\rm)}
Suppose that $p(t,y)$ satisfies
\beaa
p_\tau+\mathcal{L} p=-\varepsilon e^{-\tau/2}p_{y},\quad \tau>0,\ y>0;\quad p(\tau,0)=0,
\eeaa
where $$\mathcal{L}p:=- p_{yy}-y p_y/2-p.$$
Then there exists $\varepsilon_0>0$ such that for any compact set $K$ of $\mathbb{R}_+$, there exists $C_K>0$ such that
for $0<\varepsilon<\varepsilon_0$,
\beaa
p(\tau,y)=y\left[\frac{e^{-y^2/4}}{2\sqrt{\pi}} \Big(\int_0^\infty \xi p(0,\xi)d\xi+O(\varepsilon)\Big)+e^{-\tau/2}\tilde{p}(\tau,y)\right],
\eeaa
where $|\tilde{p}(\tau,y)|\leq C_K e^{-y^2/8}$ for all $\tau>0$ and $y\in K$; and $O(\varepsilon)$ denote a function of $(\tau,y)$ for $\tau>0$ and $y\in K$.
\end{lemma}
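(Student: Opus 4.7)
The plan is to exploit the spectral structure of $\mathcal{L}p = -p_{yy} - (y/2)p_y - p$, which I would view as self-adjoint with Dirichlet boundary data at $y=0$ on the weighted Hilbert space $H := L^2((0,\infty), e^{y^2/4}dy)$. A direct computation confirms that $\phi_0(y) := y e^{-y^2/4}$ satisfies $\mathcal{L}\phi_0 = 0$; making the change of unknown $p = e^{-y^2/4}\psi$ reduces the spectral problem for $\mathcal{L}$ to a Hermite-type equation on the half-line, from which one reads off that $\phi_0$ is the principal eigenfunction and that the remaining spectrum is contained in $[1,\infty)$. The normalization $\|\phi_0\|_H^2 = \int_0^\infty y^2 e^{-y^2/4}\,dy = 2\sqrt{\pi}$ is what will produce the prefactor $1/(2\sqrt{\pi})$ in the stated formula.

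I would then perform the orthogonal decomposition $p(\tau,\cdot) = a(\tau)\phi_0 + q(\tau,\cdot)$ with $q \perp \phi_0$ in $H$, so $a(\tau) = \tfrac{1}{2\sqrt{\pi}}\int_0^\infty y p(\tau,y)\,dy$. Taking the $H$-inner product of the PDE with $\phi_0$ and integrating by parts (which needs an a priori Gaussian decay of $p$ at infinity, supplied by a barrier argument with a supersolution of the form $e^{-y^2/8}$) yields an ODE $\dot a(\tau) = -\varepsilon e^{-\tau/2}G[p](\tau)$ with $|G[p](\tau)| \leq C\|p(\tau,\cdot)\|_H$. Combined with a weighted energy identity in $H$ that bounds $\|p(\tau,\cdot)\|_H$ uniformly in $\tau$, integrating the ODE gives $a(\tau) = a(0) + O(\varepsilon)$, i.e., exactly the coefficient in front of $\phi_0$ that appears in the leading term.

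For the remainder, the spectral gap of $\mathcal{L}$ at level $1$ forces the unperturbed semigroup on $\phi_0^\perp$ to contract at rate $e^{-\tau}$, while the Duhamel contribution of $-\varepsilon e^{-\tau/2} p_y$ produces an additional $O(\varepsilon e^{-\tau/2})$ term, yielding $\|q(\tau,\cdot)\|_H \lesssim e^{-\tau/2}$. Since $q(\tau,0)=0$ by the boundary condition, writing $q(\tau,y) = y \cdot e^{-\tau/2}\tilde p(\tau,y)$ is natural; parabolic regularity applied to the equation for $q$, together with the half-weight $e^{y^2/8}$ of $H$, upgrades the weighted $L^2$ bound to the pointwise estimate $|\tilde p(\tau,y)| \leq C_K e^{-y^2/8}$ on any compact $K \subset \mathbb{R}_+$.

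The main obstacle I anticipate is precisely this last upgrade: passing from a weighted $L^2$ control on $q$ to a pointwise bound with the sharp Gaussian envelope $e^{-y^2/8}$ uniformly in $\tau$ requires a careful iteration of local parabolic estimates with Ornstein--Uhlenbeck type weighted energy inequalities. A secondary difficulty is justifying the boundary terms at infinity in the integration by parts that defines $G[p]$; this is where the preliminary Gaussian supersolution estimate on $p$ becomes essential. Once both of these technical points are in place, assembling the decomposition $p = a(\tau)\phi_0 + y e^{-\tau/2}\tilde p$ with $a(\tau) = \tfrac{1}{2\sqrt{\pi}}(\int_0^\infty \xi p(0,\xi)\,d\xi + O(\varepsilon))$ gives precisely the stated representation.
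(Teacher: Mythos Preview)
The paper does not supply its own proof of this lemma; it is quoted verbatim from \cite[Lemma~2.2]{HNRR} and used as a black box in the subsequent Lemma~\ref{lem:z}. There is therefore no in-paper argument to compare against. That said, your outline is essentially the proof given in \cite{HNRR}: one works in the weighted space $L^2((0,\infty),e^{y^2/4}dy)$ where $\mathcal{L}$ is self-adjoint with Dirichlet condition at $0$, identifies the principal eigenfunction $\phi_0(y)=ye^{-y^2/4}$ with eigenvalue $0$ and $\|\phi_0\|^2=2\sqrt{\pi}$, uses the spectral gap (next eigenvalue equal to $1$) to force $e^{-\tau}$ decay of the orthogonal component, and handles the drift perturbation $-\varepsilon e^{-\tau/2}\partial_y$ by Duhamel, which is what limits the remainder to $e^{-\tau/2}$. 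The two technical points you flag---controlling boundary terms at infinity via a Gaussian barrier, and upgrading the weighted $L^2$ remainder bound to the pointwise $e^{-y^2/8}$ envelope through local parabolic regularity---are precisely where \cite{HNRR} spends its effort, so your diagnosis of the obstacles is accurate.
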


Due to Lemma~\ref{lem:HNRR}, we have the following estimate for $z$.

\begin{lemma}\label{lem:z}
Let $z$ satisfy \eqref{z-eq}. Then
there exists $t_0>0$ depending on $z_0$ such that for any $0<a<b<\infty$, it holds
\beaa
0<\inf_{t\geq1,a\leq x\leq b}z(t,x)\leq \sup_{t\geq1,a\leq x\leq b}z(t,x)<\infty.
\eeaa
\end{lemma}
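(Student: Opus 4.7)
The plan is to adapt the Bramson-correction analysis of \cite{HNRR}, taking advantage of the fact that the moving frame $X(t)$ already contains the logarithmic shift $(3d/c_u)\ln(t+t_0)$. First I would perform the gauge change $z(t,x)=e^{-\lambda_u x}h(t,x)$ with $\lambda_u:=c_u/(2d)$. Because $\lambda_u$ is the double root of the linearized KPP characteristic polynomial and $c_u^2=4rd$, the transport drift $c_u h_x$ and the reaction $rh$ are annihilated, and a short computation (using $\lambda_u\cdot 3d/c_u=3/2$) gives
\[
h_t=d\,h_{xx}-\frac{3d}{c_u(t+t_0)}h_x+\Big[\frac{3}{2(t+t_0)}-C_0 e^{-\mu t}\Big]h
\]
on $(0,\infty)\times(0,\infty)$, with $h(t,0)=0$ and initial data $h(0,x)=e^{\lambda_u x}z(0,x)$ still nonnegative, nontrivial, and compactly supported.

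Next I would apply the self-similar change of variables $\tau=\ln(t+t_0)$, $y=x/\sqrt{d(t+t_0)}$, together with the scaling $h(t,x)=e^{\tau/2}p(\tau,y)$. A direct computation transforms the $h$-equation into
\[
p_\tau+\mathcal{L}p=-\varepsilon\,e^{-\tau/2}p_y-C_0\,e^{\tau-\mu t}\,p,
\]
where $\mathcal{L}p=-p_{yy}-(y/2)p_y-p$ is precisely the operator of Lemma~\ref{lem:HNRR} and $\varepsilon:=3\sqrt{d}/c_u$, with $p(\tau,0)=0$. Since $t=e^\tau-t_0$, the coefficient $C_0 e^{\tau-\mu t}$ decays double-exponentially in $\tau$, so $G(\tau):=\int_0^\tau C_0\,e^{s-\mu t(s)}\,ds$ is uniformly bounded on $[0,\infty)$; setting $\tilde p:=e^{G(\tau)}p$ eliminates the zeroth-order perturbation and reduces the equation exactly to the form of Lemma~\ref{lem:HNRR}, namely $\tilde p_\tau+\mathcal{L}\tilde p=-\varepsilon\,e^{-\tau/2}\tilde p_y$, with Dirichlet condition at $y=0$ and a nontrivial, nonnegative, compactly supported initial datum.

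Lemma~\ref{lem:HNRR} then yields, for any compact $K\subset(0,\infty)$ and all $\tau$ large,
\[
\tilde p(\tau,y)=y\Big[\frac{e^{-y^2/4}}{2\sqrt{\pi}}\big(I_0+O(\varepsilon)\big)+e^{-\tau/2}R(\tau,y)\Big],\qquad I_0:=\int_0^\infty\xi\,\tilde p(0,\xi)\,d\xi>0,
\]
with $|R(\tau,y)|\leq C_K e^{-y^2/8}$. I would then fix $t_0$ large enough that $I_0$ dominates both the $O(\varepsilon)$ correction and the $e^{-\tau/2}$ remainder uniformly on $K$ for $\tau\geq 1$, producing the two-sided bound $\tilde p(\tau,y)\asymp y$ on $K$. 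Unwinding the transformations, for $x\in[a,b]$ and large $t$ the variable $y=x/\sqrt{d(t+t_0)}$ lies in a fixed compact subset of $(0,\infty)$, and
\[
z(t,x)=e^{-\lambda_u x}h(t,x)=e^{-\lambda_u x}\sqrt{t+t_0}\,e^{-G(\tau)}\tilde p(\tau,y)\asymp e^{-\lambda_u x}\,\frac{x}{\sqrt{d}},
\]
which is bounded above and below by positive constants uniformly in $t\geq 1$ and $x\in[a,b]$.

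The hard part I anticipate is the applicability of Lemma~\ref{lem:HNRR}: the parameter $\varepsilon=3/(2\sqrt{r})$ is fixed by the problem and cannot be tuned small by the choice of $t_0$, so if it exceeds the threshold $\varepsilon_0$ built into that lemma, an additional bootstrap (for example, iterating the Duhamel formula in the weighted $L^2$ space where $\mathcal{L}$ has discrete spectrum with principal eigenfunction $y e^{-y^2/4}$ at eigenvalue zero) would be needed to recover the same leading-order asymptotics. All remaining steps are routine once the sandwich $\tilde p\asymp y$ is established.
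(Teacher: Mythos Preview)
Your approach is essentially identical to the paper's; the only substantive issue is a bookkeeping slip in the self-similar time variable that manufactures the ``hard part'' you worry about. You set $\tau=\ln(t+t_0)$, so $t=0$ corresponds to $\tau=\ln t_0$, and then read off $\varepsilon=3\sqrt d/c_u=3/(2\sqrt r)$ as a fixed constant. But Lemma~\ref{lem:HNRR} is stated with initial time $\tau=0$; to apply it you must shift to $\tau':=\tau-\ln t_0=\ln\dfrac{t+t_0}{t_0}$ (this is exactly the paper's choice). Under that shift the drift term becomes
\[
-\frac{3\sqrt d}{c_u}\,e^{-(\tau'+\ln t_0)/2}\,p_y=-\frac{3\sqrt d}{c_u\sqrt{t_0}}\,e^{-\tau'/2}\,p_y,
\]
so the effective small parameter is $\varepsilon=\dfrac{3\sqrt d}{c_u\sqrt{t_0}}$. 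Choosing $t_0$ large then forces $\varepsilon<\varepsilon_0$, and Lemma~\ref{lem:HNRR} applies directly: the obstruction you anticipated disappears and no additional bootstrap is needed. The remaining steps---absorbing the double-exponentially decaying zeroth-order term into a bounded multiplicative factor, unwinding the substitutions to obtain $z(t,x)\asymp x\,e^{-\lambda_u x}$ on $[a,b]$ for large $t$, and covering $1\le t\le t_0$ by the strong maximum principle---match the paper's proof.
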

\begin{proof} Our proof is based on \cite[Lemma 2.1]{HNRR}.
Define  $$q(t,x)=e^{\frac{c_u }{2d}x} z(t,x).$$ Then, $q$ satisfies
\beaa
q_t=d q_{xx}-\frac{3d}{c_u(t+t_0)}q_x{+}\frac{3}{2(t+t_0)}q-C_0 e^{-\mu t}q \quad \mbox{in $(0,\infty)\times(0,\infty)$}
\eeaa
with $q(t,0)=0$.
Using the self-similar variables
\beaa
\tau=\ln(t+t_0)-\ln t_0,\quad y=\frac{x}{[d(t+t_0)]^{1/2}},
\eeaa
and setting $Q(\tau,y):=q(t,x)$,
direct computations yield that
\beaa
Q_{\tau}-\mathcal{L}Q=-\varepsilon{e^{-\tau/2}} Q_y+\Big[\frac{1}{2}-C_0t_0e^{\tau-\mu t_0(e^{\tau}-1)}\Big]Q \quad \mbox{in $(0,\infty)\times(0,\infty)$}
\eeaa
with $Q(\tau,0)=0$, where $\mathcal{L}$ is defined in Lemma~\ref{lem:HNRR} and
\beaa
\varepsilon:={\frac{3\sqrt{d}}{c_u\sqrt{t_0}}}.
\eeaa

Define $J(\tau):=C_0t_0e^{\tau-\mu t_0(e^{\tau}-1)}$ and
\beaa
I(\tau):=\exp\Big[\int_0^{\tau} \Big(\frac{1}{2}-J(s)\Big)ds\Big].
\eeaa
Then, by Lemma~\ref{lem:HNRR}, we have
\beaa
Q(\tau,y)&=&{I(\tau)}y\Big[\frac{e^{-y^2/4}}{2\sqrt{\pi}} \Big(\int_0^\infty \xi Q(0,\xi)d\xi+O(\varepsilon)\Big)+e^{-\tau/2}\widetilde{Q}(\tau,y)\Big]\\
         &=&e^{\tau/2}e^{-\int_0^\tau J(s)ds}y\Big[\frac{e^{-y^2/4}}{2\sqrt{\pi}} \Big(\int_0^\infty \xi Q(0,\xi)d\xi+O(\varepsilon)\Big)+e^{-\tau/2}\widetilde{Q}(\tau,y)\Big],
\eeaa
where $|\tilde{Q}(\tau,y)|\leq C_K e^{-y^2/8}$ for all $\tau>0$ and $y\in K$ for any compact set $K$.
It follows that
\beaa
z(t,x)=\frac{{xe^{-\frac{c_u }{2d}x}} }{\sqrt{d t_0}}e^{-\int_0^{\ln[(t+t_0)/t_0]} J(s)ds}\left[Ce^{-x^2/[4d(t+t_0)]}+\widetilde{z}(t,x)\,\right],
\eeaa
where for any $0<a<b<\infty$,
\beaa
\limsup_{t\to\infty}|\widetilde{z}(t,x)|<\frac{C}{2}.
\eeaa
Furthermore, it is easily checked that there exist two positive constants $C_1$ and $C_2$ such that
\beaa
C_1\leq e^{-\int_0^{\ln[(t+t_0)/t_0]} J(s)ds}\leq C_2\quad\mbox{for all $t\geq0$}.
\eeaa
It follows that for any given $0<a<b<\infty$, $z(t,x)$ has a positive lower bound and a positive upper bound
for $x\in[a,b]$ and $t\geq t_0$, provided $t_0$ is large enough.
For $1\leq t\leq t_0$, one can use the strong maximum principle to assert
that $z(t,x)$ has a positive lower bound and a positive upper bound
for $x\in[a,b]$ and $t\in[1, t_0]$. The proof is thus complete.
\end{proof}

Based on Lemma~\ref{lem:z}, one can apply the argument in \cite{HNRR} to derive a lower estimate of $\min E_m(t)$ under the condition $c_u>c_v$.

\begin{lemma}\label{lem:lower-E}
Assume that $c_u>c_v$. For any $m\in(0,1)$, there exist $M>0$ and $T>0$ such that
\beaa
\min E_m(t)\geq c_u t-{\frac{3{d}}{c_u}}\ln t-M,\ \ \forall t\geq T.
\eeaa
\end{lemma}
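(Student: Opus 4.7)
The strategy is to adapt the Bramson-correction argument of \cite{HNRR} to our setting, using the preliminaries in Lemmas~\ref{lem:v-vanishing} and~\ref{lem:z}.

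First I reduce to a scalar perturbed KPP equation. By Lemma~\ref{lem:v-vanishing} there exist $C,\mu,T_0>0$ such that $v(t,x)\leq Ce^{-\mu t}$ for all $t\geq T_0$ and $x\in\mathbb{R}^+$; the same type of bound extends to $x\leq 0$ by combining Corollary~\ref{cor:v-rate} on $|x|\leq ct$ with a symmetric mirror argument. Inserting this into the $u$-equation shows that $u$ is a supersolution of \eqref{subsol-eq} with $C_0=raC$ for $t\geq T_0$. Choosing a nonnegative, nontrivial, compactly supported $\underline{u}_0\leq u(T_0,\cdot)$ and letting $\underline{u}$ be the solution of \eqref{subsol-eq} starting from $\underline{u}_0$, the scalar comparison principle yields $u(T_0+t,x)\geq \underline{u}(t,x)$, so it suffices to prove the stated lower bound for $\underline{u}$.

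Second, I apply Lemma~\ref{lem:z} to the linearized problem \eqref{linearized-subeq}. Pick $t_0$ large (depending on $\underline{u}_0$) and a compactly supported, nonnegative, nontrivial $w_0\leq\underline{u}_0$ with $\mathrm{supp}(w_0)\subset(X(0),\infty)$, where $X(t)=c_u t-\frac{3d}{c_u}\ln(t+t_0)$. By Lemma~\ref{lem:z} the solution $w$ of \eqref{linearized-subeq} satisfies $w(t,X(t)+y)\geq c_1>0$ uniformly for $t\geq 1$ and $y$ in any fixed compact interval $[a,b]\subset(0,\infty)$. Next I transfer this linear lower bound to the nonlinear equation by building a subsolution $\psi$ of \eqref{subsol-eq} on $\{x>X(t)\}$ with Dirichlet boundary $\psi(t,X(t))=0$, of the form $\psi(t,x)=\eta\,\phi(w(t,x))$, with $\eta>0$ small and $\phi\in C^2([0,\infty))$ concave satisfying $\phi(0)=0$, $\phi'(0)=1$ and $0\leq\phi\leq\phi_\infty<1$. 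The concavity of $\phi$ yields a favourable $-d\phi''(w)w_x^2\geq 0$ term, while the smallness of $\eta$ together with the exponential decay of $C_0 e^{-\mu t}$ absorbs the residual $-r\psi^2$ and $-C_0e^{-\mu t}\psi$ contributions. After arranging $\psi(T_\ast,\cdot)\leq\underline{u}(T_\ast,\cdot)$ at some $T_\ast\geq 1$, comparison gives $\underline{u}(t,X(t)+y)\geq\eta\phi(c_1)=:c_2>0$ for all $t\geq T_\ast$ and $y\in[a,b]$.

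Third, once $\underline{u}$ maintains a fixed positive value $c_2$ at the moving point $X(t)+y_0$ for some $y_0\in[a,b]$, a standard spreading argument for the KPP equation — comparing $\underline{u}$ with the solution of the logistic equation $w_t=dw_{xx}+rw(1-w)-C_0 e^{-\mu t}w$ issued from a compactly supported seed at a late time $t_1$, and invoking Aronson--Weinberger-type spreading \cite{AW1978} — implies that $\underline{u}(t,x)\geq m$ for all $t\geq T$ and $x\leq X(t)+y_0-K$, with $K,T$ fixed constants. Since $X(t)\geq c_u t-\frac{3d}{c_u}\ln t-O(1)$, this yields $\min E_m(t)\geq c_u t-\frac{3d}{c_u}\ln t-M$ for $t$ large. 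The main obstacle is the subsolution construction of the middle paragraph: the time-decaying perturbation $-C_0 e^{-\mu t}\underline{u}$ contributes an unfavourable term in the differential inequality that must be balanced against the gain coming from the concavity of $\phi$ and the KPP quadratic $-r\underline{u}^2$; this is achieved by choosing the waiting parameter $t_0$ sufficiently large so that the perturbation is already small when $\psi$ must be checked to be a subsolution.
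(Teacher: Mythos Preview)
Your overall strategy coincides with the paper's: use Lemma~\ref{lem:v-vanishing} to bound $v$ exponentially, compare $u$ with the solution $\underline{u}$ of the perturbed Fisher--KPP equation \eqref{subsol-eq} started from a compactly supported seed below $u(T_0,\cdot)$, and then invoke Lemma~\ref{lem:z} together with the argument of \cite{HNRR} to obtain the level-set lower bound for $\underline{u}$. The paper's proof does exactly this and simply cites \cite[Proposition~3.1 and Corollary~3.2]{HNRR} for the last step, without reconstructing the transfer from the linear problem to the nonlinear one.

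Where your sketch departs from the paper is in attempting to make that transfer explicit via $\psi=\eta\phi(w)$ with $\phi$ concave, and this step as written has a sign error. A direct computation gives
\[
\psi_t-d\psi_{xx}-\psi\bigl(r-r\psi-C_0e^{-\mu t}\bigr)
=\eta(r-C_0e^{-\mu t})\bigl[\phi'(w)w-\phi(w)\bigr]-d\eta\phi''(w)\,w_x^{2}+r\eta^{2}\phi(w)^{2}.
\]
For $\psi$ to be a subsolution this quantity must be $\leq 0$. The concavity of $\phi$ makes $\phi''\leq 0$, so the term $-d\eta\phi''(w)w_x^{2}$ is \emph{nonnegative} and hence unfavorable, contrary to what you claim; moreover Lemma~\ref{lem:z} only controls $w$ on compact intervals in the moving frame, so $w_x$ is not bounded globally on $\{x>X(t)\}$ and this bad term cannot be absorbed by choosing $\eta$ small. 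The actual HNRR lower-bound argument does not proceed through a concave composition of the linear solution; it relies on the explicit self-similar representation (as in the proof of Lemma~\ref{lem:z}) and parabolic estimates to control $\underline{u}$ directly near the front. So your reduction and comparison steps are correct and match the paper, but the middle paragraph should be replaced by a direct appeal to \cite[Proposition~3.1, Corollary~3.2]{HNRR}, adapted to the perturbed reaction $r-r\underline{u}-C_0e^{-\mu t}$ exactly as the paper does.
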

\begin{proof}
Thanks to Lemma~\ref{lem:z}, we can follow the same line as that in \cite[Propositon 3.1 and Corollary 3.2]{HNRR}  to deduce that
there exists $M'>0$ and $T_0>0$ such that
\bea\label{E-lowbar}
\min \underline{E}_m(t)\geq c_u t-{\frac{3{d}}{c_u}}\ln t-M',\ \ \forall t\geq T_0,
\eea
where $\underline{E}_m(t)=\{x>0|\,\underline{u}(t,x)=m\}$ and $\underline{u}$ solves \eqref{subsol-eq} with
$\underline{u}(0,\cdot)\geq(\not\equiv)0$ and is of compact support. Using Lemma~\ref{lem:v-vanishing} and
taking $\underline{u}(0,\cdot)\leq u(T,\cdot)$ ($T$ is defined in Lemma~\ref{lem:v-vanishing}),
one can apply the comparison principle to deduce that
$u(t+T,\cdot)\geq \underline{u}(t,\cdot)$ for all $t\geq0$, which in turn implies that
$$\min E_m(t+T)\geq \min \underline{E}_m(t),\ \ \forall t\geq T_0.$$ By \eqref{E-lowbar}, we thus complete the proof.
\end{proof}

We are ready to prove Theorem~\ref{thm2}.

\begin{proof}[Proof of Theorem~\ref{thm2}]
By Lemma~\ref{lem:v-vanishing}, we have
$\lim_{t\to\infty}\sup_{x\in[0,\infty)}|v(t,x)|=0$. Also,
in view of Lemma~\ref{lem:upper-E} and Lemma~\ref{lem:lower-E}, we can safely follow the same analysis as that in \cite[Section 4]{HNRR} to conclude that there exist $C>0$ and a bounded function $\omega:\ [0,\infty)\to \mathbb{R}$ such that
\beaa
\lim_{t\to\infty}\sup_{x\in[0,\infty)}\Big|u(t,x)-U_{KPP}\Big(x-c_{u} t+{\frac{3{d}}{c_u}}\ln t+\omega(t)\Big)\Big|=0.
\eeaa
Thus, the proof is complete.
\end{proof}

{
The argument used in this subsection can provide
the propagating behavior of the fastest species for a general $n$-species competition-diffusion system.
We formulate it as follows:

\begin{cor}
Consider the following $n$-species competition-diffusion system:
\beaa
\begin{cases}
u^{i}_t=d_i u^{i}_{xx}+r_iu^{i}(1-\sum_{j=1}^{n}b_{ij}u^{j}),\quad t>0,\ x\in\mathbb{R},\ i=1,...,n,\\
u^i(0,x)=u^i_0(x),\ x\in\mathbb{R},\ i=1,...,n,
\end{cases}
\eeaa
where $d_i,r_i,b_{ij}>0$ for $i,j=1,\cdots,n$ and
$u^i_0\in C(\mathbb{R})\setminus\{0\}$, $u^i_0\geq0$ with compact support.
If
\beaa
c_{1}\leq c_{2} \leq\cdots \leq c_{n-1}< c_{n},
\eeaa
where $c_{i}:=2\sqrt{d_ir_i}$ for $i=1,\cdots,n$,
then for any small $\epsilon>0$, it holds that
\beaa
&&
\lim_{t\to\infty}\sup_{x\in[(c_{n-1}+\epsilon)t,\infty)}\Big|u^n(t,x)-U_{KPP}\Big(x-c_{n} t+
{\frac{3d_n}{c_n}}\ln t+\omega(t)\Big)\Big|=0,\\
&&\lim_{t\to\infty}\sum_{i=1}^{n-1}\sup_{x\in[(c_i+\epsilon)t,\infty)}|u^i(t,x)|=0,
\eeaa
where $\omega$ is a bounded function defined on $[0,\infty)$, and $U_{KPP}(x-c_n t)$ is a traveling wave solution of
the Fisher-KPP equation in \eqref{Single eq} {\rm(}with $d=d_n$ and $r=r_n${\rm)}
connecting $1$ and $0$.
\end{cor}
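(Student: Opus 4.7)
The plan is to mimic the proof of Theorem~\ref{thm2}: use Fisher--KPP comparisons from above on each species, then invoke the Hamel--Nolen--Roquejoffre--Ryzhik machinery (as in Lemmas~\ref{lem:z}--\ref{lem:lower-E}) to pin $u^n$ to the KPP profile with Bramson correction in the moving frame $x=c_nt-(3d_n/c_n)\ln t+O(1)$.

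First, I would dispose of all the slower species in the stated region. Since every $b_{ij}u^j$ is non-negative, dropping the cross-competition terms gives
\[
u^i_t\leq d_iu^i_{xx}+r_iu^i(1-b_{ii}u^i),\qquad i=1,\dots,n.
\]
A linear rescaling turns this into the standard Fisher--KPP equation with spreading speed $c_i=2\sqrt{d_ir_i}$, and because $u^i_0$ has compact support, the argument of Lemma~\ref{lem:exp-decay} produces $M_i,\nu_i,T_i>0$ with $u^i(t,x)\leq M_ie^{-\nu_i t}$ on $\{x\geq(c_i+\epsilon)t,\,t\geq T_i\}$. For $i<n$ this is \emph{a fortiori} valid on the larger half-plane $x\geq(c_{n-1}+\epsilon)t$, which yields the claimed vanishing of $u^1,\dots,u^{n-1}$ and, in particular, an exponential bound $\sum_{j\neq n}b_{nj}u^j(t,x)\leq C_0e^{-\mu t}$ on that region for some $C_0,\mu>0$.

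For $u^n$ I would control the level sets $E_m^n(t):=\{x>0:u^n(t,x)=m\}$ from both sides. The upper bound $\max E_m^n(t)\leq c_nt-(3d_n/c_n)\ln t+M$ follows by comparing $u^n$ with the solution of the pure KPP equation obtained by dropping the (non-negative) competition terms and invoking the Bramson--HNRR estimate exactly as in Lemma~\ref{lem:upper-E}. For the matching lower bound, the exponential smallness of the slower species shows that $u^n$ is a supersolution of
\[
\underline u_t=d_n\underline u_{xx}+\underline u\bigl(r_n-r_nb_{nn}\underline u-C_0e^{-\mu t}\bigr)
\]
on $\{x\geq(c_{n-1}+\epsilon)t\}$ for large $t$. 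Linearizing along $X(t)=c_nt-(3d_n/c_n)\ln(t+t_0)$, passing to self-similar variables, and applying Lemma~\ref{lem:HNRR} as in Lemma~\ref{lem:z} produces uniform positive upper and lower bounds on compact subsets of $(0,\infty)$, from which $\min E_m^n(t)\geq c_nt-(3d_n/c_n)\ln t-M$ follows as in Lemma~\ref{lem:lower-E}. Once both bounds on $E_m^n(t)$ are in hand, the stability argument used at the end of the proof of Theorem~\ref{thm2} delivers a bounded function $\omega$ realizing the stated uniform convergence of $u^n$ to $U_{KPP}(x-c_nt+(3d_n/c_n)\ln t+\omega(t))$ on $x\geq(c_{n-1}+\epsilon)t$.

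The main obstacle is localizing the HNRR construction to the half-plane $x\geq(c_{n-1}+\epsilon)t$: the argument in Lemmas~\ref{lem:z}--\ref{lem:lower-E} is phrased on all of $\mathbb{R}_+$, and one must choose the starting time $t_0$ and the initial profile for the perturbed KPP comparison so that the conclusion of Lemma~\ref{lem:z} still yields uniform positive bounds on the relevant compact window. The exponential integrability in time of the perturbation $C_0e^{-\mu t}$ is what keeps the HNRR argument robust, so I expect the adaptation to go through essentially verbatim, but this compatibility of supports and times is the point at which the multi-species generalization is not entirely automatic.
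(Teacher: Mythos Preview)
Your proposal is correct and follows exactly the adaptation the paper intends: Fisher--KPP comparison from above to make the slower species exponentially small ahead of $(c_i+\epsilon)t$, then the perturbed-KPP/HNRR machinery of Lemmas~\ref{lem:upper-E}--\ref{lem:lower-E} for $u^n$. The localization concern you raise is not a genuine obstacle, since the HNRR subsolution is built on the moving half-line $x>X(t)\sim c_n t$ with Dirichlet data there, and for $\epsilon<c_n-c_{n-1}$ this half-line sits inside $\{x\geq(c_{n-1}+\epsilon)t\}$ for all large $t$, so the perturbed inequality is only needed where it already holds.
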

}

\medskip

\subsection{The proof of Theorem~\ref{thm3}}

In this subsection, combining some arguments used in the proof of Theorem~\ref{thm1} and Theorem ~\ref{thm2},
we shall establish Theorem~\ref{thm3}.

\begin{lemma}\label{lem:exp-deacy-u}
Assume that $c_v>c_u$. Then for any $c>c_{uv}$, there exist positive constants $C,\mu, T$ such that
\beaa
\sup_{x\in[ct,\infty)}u(t,x)\leq C e^{-\mu t},\quad\forall t\geq T.
\eeaa
\end{lemma}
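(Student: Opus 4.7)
The plan is to bound $u$ from above by a slightly inflated and shifted bistable traveling wave profile, and then exploit the exponential decay $U(\xi)\sim \ell_1 e^{\lambda_1\xi}$ at $+\infty$ provided by Lemma~\ref{lem:AS+} (recall $\lambda_1<0$). Concretely, I take the super/sub-solution pair from \eqref{upper-sol},
\[
\bar u(t,x)=(1+q(t))\,U(x-c_{uv}t+\eta(t)),\qquad \underline v(t,x)=\max\{V(x-c_{uv}t+\eta(t))-p(t),\,0\},
\]
with $p,q,\eta$ as in \eqref{parameters}. By Lemma~\ref{lem:super-sol}, for any admissible parameters (in particular $\alpha\in(0,\min\{r,1,b-1\})$ and $p_0<q_0(1-\alpha)/(2a)$, any $\eta_1<0$) and any $\eta_0\in\mathbb{R}$, $(\bar u,\underline v)$ satisfies $N_1[\bar u,\underline v]\ge 0$ and $N_2[\bar u,\underline v]\le 0$ for all $t\ge T^{**}$. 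Once the initial comparison $u(T_0,\cdot)\le\bar u(T_0,\cdot)$ and $v(T_0,\cdot)\ge\underline v(T_0,\cdot)$ on $\mathbb{R}$ is secured at some large $T_0\ge T^{**}$, Lemma~\ref{CP} propagates $u\le\bar u$ to every $t\ge T_0$ and $x\in\mathbb{R}$, and for $x\ge ct$ with $c>c_{uv}$ Lemma~\ref{lem:AS+} yields
\[
u(t,x)\le 2\,U(x-c_{uv}t+\eta_0)\le C\,e^{\lambda_1(x-c_{uv}t+\eta_0)}\le C\,e^{\lambda_1(c-c_{uv})t}=C\,e^{-\mu t}, \qquad \mu:=-\lambda_1(c-c_{uv})>0,
\]
which is the desired bound.

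The initial comparison is organised by choosing $\eta_0=\eta_0(T_0)$ so that the transition point $x_\ast:=c_{uv}T_0-\eta(T_0)$ of the reference fronts falls near $(c_v-\varepsilon)T_0$ for some small fixed $\varepsilon\in(0,c_v-c_u)$. The inequality $u(T_0,\cdot)\le\bar u(T_0,\cdot)$ is then straightforward: for $x\ll x_\ast$ (where $U\ge 1-\delta$), Lemma~\ref{lem:simple est} gives $u\le 1+Me^{-rT_0}$, which is dominated by $(1+q_0 e^{-\alpha T_0})(1-\delta)$ since $\alpha<r$ and $q_0$ is fixed; for $x\gtrsim x_\ast$, noting $x_\ast\ge c'T_0$ for some fixed $c'>c_u$ thanks to $c_v-\varepsilon>c_u$, Lemma~\ref{lem:exp-decay} makes $u(T_0,\cdot)$ exponentially small in $T_0$, which is easily dominated by the (bounded below by a positive constant) transition values of $\bar u$ and by its exponential tail at infinity (which in turn beats the Gaussian tail of $u(T_0,\cdot)$ coming from the compact support of $u_0$).

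The main obstacle is the inequality $v(T_0,\cdot)\ge\underline v(T_0,\cdot)$, which demands $v(T_0,x)\ge 1-p_0e^{-\alpha T_0}$ in a neighbourhood of $x_\ast\approx(c_v-\varepsilon)T_0$. Under $c_v>c_u$, the species $v$ spreads to the right at speed $c_v$ while $u$ trails behind (bounded ahead by its KPP speed $c_u$), so one expects $v(T_0,\cdot)\to 1$ uniformly on intermediate windows of the form $[c'T_0,(c_v-\varepsilon)T_0]$ with $c_u<c'<c_v-\varepsilon$. Rigorously, this segregation can be derived by a Carr\`ere-type sub-solution construction analogous to the one sketched in the Appendix for \eqref{segregation-uv2}, but with the roles of $u$ and $v$ swapped and the window shifted into the $v$-winning strip; the resulting convergence rate is exponential in $T_0$ and can be made to beat $p_0 e^{-\alpha T_0}$ by taking $\alpha$ smaller if necessary. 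Once this segregation estimate is in hand, the initial inequality for $v$ is secured and the argument closes.
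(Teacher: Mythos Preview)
Your approach has a genuine gap: the initial comparison $v(T_0,\cdot)\ge\underline v(T_0,\cdot)$ cannot hold on all of $\mathbb{R}$ under {\bf(A2)}. Since $v_0$ has compact support, $v(T_0,x)\to 0$ as $x\to+\infty$ for every fixed $T_0$, whereas $\underline v(T_0,x)\to 1-p_0e^{-\alpha T_0}>0$ as $x\to+\infty$. So the ordering fails for all sufficiently large $x$, regardless of how you place the transition $x_\ast$. You only argue the inequality ``in a neighbourhood of $x_\ast$'', but the pair \eqref{upper-sol} needs the comparison on the whole line (or on a domain whose boundary conditions you can control). Restricting to a half-line or moving interval with right endpoint near $c_v't$ would require $v(t,c_v't)\ge 1-p(t)$ for all $t\ge T_0$, i.e.\ an exponential-in-$t$ lower bound on $v$ along that ray, which is essentially Lemma~\ref{lem:exp-deacy-v} and is proved \emph{after} the present lemma; invoking it here is circular. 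Restricting to a fixed right endpoint $x_\ast$ only yields $u\le\bar u$ on $(-\infty,x_\ast]$, which says nothing about $x\ge ct$ once $ct>x_\ast$.

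The paper avoids this obstruction by not using the bistable supersolution at all. It first invokes the Carr\`ere segregation \eqref{intermediate} to get $u<\varepsilon$ and $v>1-\varepsilon$ on every window $[c_1t,c_2t]$ with $c_{uv}<c_1<c_2<c_v$, which yields the scalar linear inequality $u_t\le du_{xx}-\rho u$ with $\rho=r(a(1-\varepsilon)-1)>0$ there. A comparison with the linear problem on fixed intervals (in a moving frame), parametrised by $T$, then gives exponential decay of $u$ on $[c^-t,c^+t]$ for any $c_{uv}<c^-<c^+<c_v$. Finally, since $c_u<c_v$, one chooses $c^+\in(c_u,c_v)$ and patches the far right $x\ge c^+t$ with Lemma~\ref{lem:exp-decay}, covering all of $[ct,\infty)$ for any $c>c_{uv}$.
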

\begin{proof}
First, we will show that
for each $c_{uv}<c^-<c^+<c_v$, there exist $C_1,\mu_1,T_1>0$
\bea\label{exp-decay-u}
\sup_{x\in[c^{-}t,c^{+}t]}u(t,x)\leq C_1 e^{-\mu_1 t},\quad \forall t\geq T_1.
\eea
Since $c_v>c_u$, thanks to \cite[Theorem 1]{Carrere} (following the proof there with slight modifications), we have
\bea\label{intermediate}
\lim_{t\to\infty}\sup_{c_1t\leq x\leq c_2t}\Big(|u(t,x)|+|v(t,x)-1|\Big)=0\quad \mbox{for all $c_{uv}<c_1<c_2<c_v$}.
\eea
Therefore, one can choose small $\varepsilon>0$ and $T_0\gg1$ such that
$0<u<\varepsilon$ and $v\geq 1-\varepsilon$ in $[T_0,\infty)\times [c_1t,c_2t]$.

For notational convenience, let us denote $$\rho:=-r[1-a(1-\varepsilon)].$$ Here
we may assume that $\rho>0$ since $a>1$ and $0<\varepsilon\ll1$.
This implies that
\bea\label{key-ineq}
u_t\leq d u_{xx}-\rho u\quad \mbox{in $[T_0,\infty)\times [c_1t,c_2t]$};\quad  {u}(t,c_it)\in{[0,\varepsilon]},\quad \mbox{for $t\geq T_0$, $i=1,2$}.
\eea

Set
\beaa
&&c^{*}:=(c_1+c_2)/2,\quad \hat{c}:=(c_2-c_1)/2,\\
&&y:=x-c^* t,\quad (\hat{u},\hat{v})(t,y):=(u,v)(t,y+c^* t).
\eeaa
By \eqref{key-ineq}, we have
\beaa
\begin{cases}
\hat{u}_t\leq d\hat{u}_{yy}+c^*  \hat{u}_y-\rho\hat{u}\quad \mbox{in $[T_0,\infty)\times [-\hat{c}t,\hat{c}t]$},\\
\hat{u}(t,\pm\hat{c}t)\in{ [0,\varepsilon]}\ \mbox{ for } t\geq T_0.
\end{cases}
\eeaa
Fix $T>T_0$ and consider
\bea\label{phi-sys}
\begin{cases}
\phi_t=d\phi_{yy}+c^*\phi_y-\rho\phi,\quad t>0,\ -\hat{c}T<y<\hat{c}T,\\
\phi(t,\pm\hat{c}T)=\varepsilon,\quad t>0,\\
\phi(0,x)=\varepsilon,\quad -\hat{c}T\leq y\leq \hat{c}T.
\end{cases}
\eea
Then, by comparison, we have
 $$\phi(t,{y})\geq \hat{u}(t+T,{y})\ \ \mbox{ for $t\geq0$ and $-\hat{c}T\leq y\leq \hat{c}T$.}$$

Let $\Phi(t,y)=e^{\rho t}(\varepsilon-\phi)$. Then {the} system \eqref{phi-sys} is reduced to
\beaa\label{Phi-sys}
\begin{cases}
\Phi_t=d\Phi_{yy}+c^*\Phi_y+\varepsilon\rho e^{\rho t},\quad t>0,\ -\hat{c}T<y<\hat{c}T,\\
\Phi(t,\pm\hat{c}T)=0,\quad t>0,\\
\Phi(0,x)=0,\quad -\hat{c}T\leq y\leq \hat{c}T.
\end{cases}
\eeaa
From the proof of \cite[Proposition 3.2]{KM2015}, we have: for any small $\sigma>0$, there exists $T^*\gg1$ and $\nu(\sigma)>0$ such that for $T\geq T^*$,
\beaa
\Phi(t,y)\geq \rho (e^{\rho t}-1)(1-C_1 e^{-\nu(\sigma) \hat{c}T}),\quad (t,x)\in D(\sigma),
\eeaa
where  $C_1$ is a positive constant, $\nu(\sigma)$ has a positive lower bound for all small $\sigma$ and
$$D(\sigma):=\left\{(t,y)\Big|\, 0<t<\frac{(\sigma\hat{c})^2T}{4\sqrt{d}},\ |y|\leq (1-\sigma)\hat{c}T\right\}.$$
It follows that
\beaa
\phi(t,y)\leq \varepsilon-\varepsilon(1-e^{-\rho t})(1-C_1 e^{-\nu\hat{c} T})\leq \varepsilon(C_1e^{-\nu\hat{c} T}+e^{-\rho t}),\quad (t,x)\in D(\sigma).
\eeaa

Taking $t=(\sigma\hat{c})^2 T/(4\sqrt{d})$ and $\sigma$ small enough such that $\nu\hat{c}>\rho(\sigma\hat{c})^2/(4\sqrt{d})$,
we obtain
\bea\label{phi-lower}
\phi(t,y)\leq\varepsilon(C_1+1)e^{-\rho(\sigma\hat{c})^2T/(4\sqrt{d})},\quad |y|\leq (1-\sigma) \hat{c}T.
\eea
Then, by comparison, $\hat{u}(t+T,y)\leq \phi(t,y)$, which together with \eqref{phi-lower} gives
\beaa
\hat{u}\Big(\frac{(\sigma \hat{c})^2 T}{4\sqrt{d}}+T,y\Big)&\leq&\varepsilon(C_1+1)e^{-\rho(\sigma\hat{c})^2T/(4\sqrt{d})},\quad |y|\leq (1-\sigma) \hat{c}T.
\eeaa
Note that
\beaa
t=\frac{(\sigma \hat{c})^2 T}{4\sqrt{d}}+T\quad \Longleftrightarrow\quad T=\Big(1+\frac{\sigma^2 \hat{c}^2}{4\sqrt{d}}\Big)^{-1}t.
\eeaa
Thus, we infer that
\beaa
\hat{u}(t,y)\leq\varepsilon(C_1+1)e^{-\delta t}\quad\mbox{ for $t\geq T^{**}$ and $|y|\leq (1-\sigma) \hat{c}t$},
\eeaa
where
\beaa
\delta:=\rho(\frac{\sigma^2 \hat{c}^2}{4\sqrt{d}})\Big(1+\frac{\sigma^2 \hat{c}^2}{4\sqrt{d}}\Big)^{-1}>0,\quad T^{**}:=T^{*}+\frac{\sigma^2\hat{ c}^2}{4\sqrt{d}}T^{*}.
\eeaa
Hence, it follows that
\beaa
u(t,x)\leq\varepsilon(C_1+1)e^{-\delta t}\quad\mbox{for $t\geq T^{**}$ and $[c^*-(1-\sigma)\hat c]t\leq x\leq[c^*+(1-\sigma)\hat c]t$}.
\eeaa
Since $\sigma>0$ can be arbitrarily small and $c_1$ (resp., $c_2$) can be arbitrarily close to $c_{uv}$ (resp., $c_v$) such that
$(1-\sigma)c_1<c^-<c^+<(1-\sigma) c_2$,
we see that \eqref{exp-decay-u} holds. Finally, due to the assumption $c_u<c_v$,
Lemma~\ref{lem:exp-deacy-u} follows from \eqref{exp-decay-u} and Lemma~\ref{lem:exp-decay}.
\end{proof}

Thanks to Lemma~\ref{lem:exp-deacy-u} and \eqref{intermediate}, one can follow the same lines as in Lemma~\ref{lem:u-rate} (with minor modifications) to obtain
the following result.

\begin{lemma}\label{lem:exp-deacy-v}
Assume that $c_v>c_u$. Then for any $c_{uv}<c_1<c_2<c_v$, there exist positive constants $C'$, $\nu$ and $T'$ such that
\beaa
\inf_{x\in[c_1 t,c_2 t]}v(t,x)\geq 1-C' e^{-\nu t},\quad\forall t\geq T'.
\eeaa
\end{lemma}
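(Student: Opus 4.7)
I would mirror the argument of Lemma~\ref{lem:u-rate} with the roles of $u$ and $v$ exchanged, but carried out in a moving window rather than around the origin.

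First set $c^*:=(c_1+c_2)/2$, $\hat c:=(c_2-c_1)/2$ and pass to the co-moving frame via $\xi=x-c^* t$, $\hat v(t,\xi):=v(t,\xi+c^* t)$, $\hat u(t,\xi):=u(t,\xi+c^* t)$, so that
\begin{equation*}
\hat v_t=\hat v_{\xi\xi}+c^*\hat v_\xi+\hat v(1-\hat v-b\hat u),
\end{equation*}
and the target window $[c_1t,c_2t]$ becomes $[-\hat ct,\hat ct]$. Picking any $c_0\in(c_{uv},c_1)$ and using $c^*-\hat c=c_1>c_0$, Lemma~\ref{lem:exp-deacy-u} yields constants $C,\mu>0$ with $\hat u(t,\xi)\le Ce^{-\mu t}$ on $|\xi|\le\hat ct$ for all large $t$, while \eqref{intermediate} produces some $\eta$ arbitrarily close to $1$ with $\hat v\ge\eta$ on the same set.

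Next, for a large parameter $\hat T$, I would introduce the auxiliary subsolution $\phi$ on the fixed interval $[-\hat c\hat T,\hat c\hat T]$ satisfying
\begin{equation*}
\begin{cases}
\phi_t=\phi_{\xi\xi}+c^*\phi_\xi+\eta(1-\phi)-bCe^{-\mu(t+\hat T)}\phi,\\
\phi(t,\pm\hat c\hat T)=\eta,\qquad \phi(0,\xi)=\eta.
\end{cases}
\end{equation*}
The bounds $\eta\le\phi\le 1$ give $(\eta-\phi)(1-\phi)\le 0\le b\phi(Ce^{-\mu(t+\hat T)}-\hat u)$, which is precisely the subsolution inequality for $\hat v(t+\hat T,\cdot)$, while the boundary and initial data are dominated by $\hat v$ through $\hat v\ge\eta$. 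By comparison, $\hat v(t+\hat T,\xi)\ge\phi(t,\xi)$ on the strip. The analysis of $\phi$ then follows Lemma~\ref{lem:u-rate} closely: introduce an integrating factor $\Phi:=e^{Q(t)}(\phi-\eta)$ with $Q'(t)=\eta(1+(bC/\eta)e^{-\mu(t+\hat T)})$, represent $\Phi$ via the Green's function of $\partial_t-\partial_{\xi\xi}-c^*\partial_\xi$ on $[-\hat c\hat T,\hat c\hat T]$ with Dirichlet boundary, and invoke the lower bound of \cite[Lemma~6.5]{DuLou} on the parabolic region $\hat D_\epsilon=\{0<t\le \epsilon^2\hat c^2\hat T/4,\ |\xi|\le (1-\epsilon)\hat c\hat T\}$ to produce $\phi(t,\xi)\ge 1-\hat K e^{-\mu\hat T}-e^{-\eta t}$ there. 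Evaluating at $t=\epsilon^2\hat c^2\hat T/4$ and converting the size of $\hat T^{-1}$ into decay in the original time via $t'=(1+\epsilon^2\hat c^2/4)\hat T$, exactly as in the last step of Lemma~\ref{lem:u-rate}, gives $\hat v(t,\xi)\ge 1-M'e^{-\nu t}$ on $|\xi|\le(1-\epsilon)\hat c t$ for some $M',\nu>0$. Since $c_1,c_2$ were arbitrary in $(c_{uv},c_v)$, slightly enlarging the pair $(c_1,c_2)$ to absorb the factor $(1-\epsilon)$ recovers the desired estimate on $[c_1 t,c_2 t]$.

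The main obstacle is that \cite[Lemma~6.5]{DuLou} is stated for a pure diffusion operator, whereas in the co-moving frame $\phi$ obeys a convection-diffusion equation with drift $c^*$. This is the ``minor modification'' alluded to in the excerpt: because the convection only translates the kernel, the reflection/image-sum construction goes through after replacing the heat kernel by $G_{c^*}(t-\tau,x-\xi)=(4\pi(t-\tau))^{-1/2}\exp\bigl(-(x-\xi+c^*(t-\tau))^2/(4(t-\tau))\bigr)$, and the Gaussian mass lower bound on $\hat D_\epsilon$ survives with constants depending additionally on $c^*$; beyond this adjustment, the remainder of the argument reproduces that of Lemma~\ref{lem:u-rate} almost verbatim.
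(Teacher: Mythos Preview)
Your proposal is correct and matches the paper's own approach: the paper simply states that one should follow the proof of Lemma~\ref{lem:u-rate} with minor modifications, replacing the inputs Lemma~\ref{lem:v-rate} and \eqref{segregation-uv} by Lemma~\ref{lem:exp-deacy-u} and \eqref{intermediate}, and you have fleshed out exactly this. One small remark on the drift issue you flag: the paper has already carried out precisely this kind of Green's function estimate for an advection--diffusion operator in the proof of Lemma~\ref{lem:exp-deacy-u}, invoking \cite[Proposition~3.2]{KM2015} rather than \cite[Lemma~6.5]{DuLou}, so you could appeal to that result directly instead of re-deriving the drifted kernel bound; alternatively, since the $v$-equation has no drift in the original coordinates, you could avoid the co-moving frame entirely and run the $\phi$-argument on the fixed (translated) interval $[c_1\hat T,c_2\hat T]$, which keeps \cite[Lemma~6.5]{DuLou} applicable verbatim after a trivial shift.
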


\begin{remark}\label{rk-sec4}
We remark that the parallel proof of Lemma~\ref{lem:exp-deacy-u} and Lemma~\ref{lem:exp-deacy-v} also shows that
if $c_v>c_u$, then there exist positive constants $C$, $\mu$, $\nu$ and $T$ such that
\beaa
&&\sup_{x\in(-\infty,-ct]}u(t,x)\leq C e^{-\mu t},\ \quad\forall t\geq T\ \ \mbox{if $c>c_{uv}$},\\
&&\inf_{x\in[-c_2 t,-c_1 t]}v(t,x)\geq 1-C e^{-\nu t},\ \quad \forall t\geq T\ \ \mbox{if $c_{uv}<c_1<c_2<c_v$}.
\eeaa
\end{remark}

%A pair of supersolution $(\overline{u},\underline{v})$ given in Section 3 cannot be used
%as a comparing function to obtain the asymptotic behavior of $(u,v)$.
%A new pair of supersolution is constructed in the following lemma.

\begin{lemma}\label{lem:cov-left}
Assume that $c_v>c_u$. Then for any $c\in(c_{uv},c_v)$,
there exists $h_1\in \mathbb{R}$ such that the solution of \eqref{LV-sys}-\eqref{LV-ic} satisfies
\beaa
&&\lim_{t\to\infty}\left[\sup_{x\in[0,ct)}
\Big|u(t,x)-U(x-c_{uv}t-h_1)\Big|+\sup_{x\in[0,ct)}\Big|v(t,x)-V(x-c_{uv}t-h_1)\Big|
\right]=0.
\eeaa
\end{lemma}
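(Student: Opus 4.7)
The plan is to mimic the proof of Proposition~\ref{prop1} and Lemma~\ref{lem: h}, but carried out on the moving strip $\{|x|\le c^*t\}$ for a fixed $c^*\in(c,c_v)$. The crucial ingredient is that on the lateral boundary $x=\pm c^*t$ the solution already carries data exponentially close to the stable state $(0,1)$: by Lemma~\ref{lem:exp-deacy-u}, Lemma~\ref{lem:exp-deacy-v} and Remark~\ref{rk-sec4}, there exist $C,\mu,\nu>0$ with $u(t,\pm c^*t)\le Ce^{-\mu t}$ and $v(t,\pm c^*t)\ge 1-Ce^{-\nu t}$ for all large $t$. This smallness makes it possible to reuse directly the supersolution/subsolution pair \eqref{supersol-2}--\eqref{subsol-2} from Section~\ref{sec:A1}.

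First I would fix $c^*\in(c,c_v)$ and choose the parameter $\beta$ in Lemmas~\ref{lem:super-sol2}, \ref{lem:subsol-2} so small that $\beta<\min\{\mu,\nu\}$ while the differential inequalities there still hold. At $x=\pm c^*t$ the two translates of $U$ appearing in $\overline u$ contribute $\approx 0+1-1=0$ up to exponentially small errors (using Lemmas~\ref{lem:AS+}--\ref{lem:AS-} together with $c^*>c_{uv}>0$), so $\overline u(t,\pm c^*t)\approx \hat p_0 e^{-\beta t}$ and $\underline v(t,\pm c^*t)\approx 1-\hat q_0 e^{-\beta t}$. The choice $\beta<\mu,\nu$ then yields
\[
u(t,\pm c^*t)\le \overline u(t,\pm c^*t),\qquad v(t,\pm c^*t)\ge \underline v(t,\pm c^*t),
\]
and symmetric bounds for the subsolution pair. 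At a sufficiently large initial time $T$, the comparison at $t=T$ inside the strip is arranged by choosing $\zeta_0$ suitably, exactly as in Lemmas~\ref{lem:order-3} and \ref{lem:order-4} (using H3 inside the bounded window and the exponential boundary estimates near $|x|=c^*t$). Applying the comparison principle (Lemma~\ref{CP}) on $(T,\infty)\times(-c^*t,c^*t)$ produces the two-sided bound
\[
\underline u(t,x)\le u(t+T,x)\le \overline u(t,x),\qquad \underline v(t,x)\le v(t+T,x)\le \overline v(t,x)
\]
for all $|x|\le c^*t$ and $t$ large.

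With this sandwich in hand, I would pass to the moving frame $\xi=x-c_{uv}t$ and set $(\hat u,\hat v)(t,\xi)=(u,v)(t,\xi+c_{uv}t)$. Along any sequence $t_n\to\infty$, parabolic regularity and diagonalisation extract a locally uniform subsequential limit $(u^\infty,v^\infty)$ solving the moving-frame system \eqref{limit sys} on $\mathbb R\times\mathbb R$. Because $c^*>c_{uv}$, every compact $\xi$-window is eventually contained in $[-(c^*+c_{uv})t,\,(c^*-c_{uv})t]$, so the sandwich estimate passes to the limit and pinches $(u^\infty,v^\infty)$ between two translates of $(U,V)$. Defining $h_1,h_2$ as in Lemma~\ref{lem: h} and running the sliding/strong-maximum-principle argument of that lemma essentially verbatim then gives $h_1=h_2$, hence $(u^\infty,v^\infty)(t,\xi)=(U,V)(\xi-h_1)$. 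Since the limit is independent of the subsequence, $(\hat u,\hat v)(t,\cdot)\to(U,V)(\cdot-h_1)$ locally uniformly. Uniform convergence on $[0,ct]$ is then inherited from the limits $(U,V)(\pm\infty)\in\{(1,0),(0,1)\}$ combined with H3 inside any bounded window and the exponential lateral estimates near $|x|=c^*t$, exactly as in the final step of the proof of Proposition~\ref{prop1}.

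The main technical obstacle is precisely the lateral boundary comparison: the correction $\hat p_0 e^{-\beta t}$ in the supersolution must dominate the lateral value of $u$, which decays at rate $e^{-\mu t}$, and similarly $\hat q_0 e^{-\beta t}$ must dominate $1-v(t,\pm c^*t)$, decaying at rate $e^{-\nu t}$. This couples two independent sources of exponential decay — the internal bistable profile entering through $\hat p,\hat q$ and the outer Fisher--KPP spread of $v$ at speed $c_v$ — and forces $\beta<\min\{\mu,\nu\}$. Checking that Lemmas~\ref{lem:super-sol2} and \ref{lem:subsol-2} still produce valid super- and subsolutions under this extra smallness on $\beta$ is straightforward by inspection of their proofs (where $\beta$ is only required to be sufficiently small), but it is the step where the bookkeeping of rates demands the most care.
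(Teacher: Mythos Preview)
Your approach is correct and follows the same overall architecture as the paper: sandwich $(u,v)$ between the two-front super/subsolution pairs \eqref{supersol-2}--\eqref{subsol-2}, then run the $\omega$-limit/sliding argument of Lemma~\ref{lem: h}. The one genuine difference lies in how the \emph{subsolution} side of the sandwich is obtained. You carry out both comparisons on the strip $\{|x|\le c^*t\}$, checking lateral boundary conditions for $(\underline u,\overline v)$ as well as for $(\overline u,\underline v)$. The paper instead introduces an auxiliary solution $(\hat u,\hat v)$ of \eqref{LV-sys} whose initial data satisfy {\bf(A1)} with $\hat u_0=u_0$ and $\hat v_0>v_0$; by comparison $\hat u\le u$, $\hat v\ge v$, and since Lemma~\ref{lem:order-4} already yields $\underline u\le\hat u$, $\overline v\ge\hat v$ on the \emph{whole line}, the subsolution bound transfers to $(u,v)$ for free with no lateral boundary to verify. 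Only the supersolution comparison is done on the strip, and there the paper checks the boundary exactly as you do, using Lemmas~\ref{lem:exp-deacy-u}--\ref{lem:exp-deacy-v} and requiring $\beta<\min\{\lambda_u(c+c_{uv}),\lambda_v(c+c_{uv}),\mu,\nu\}$.

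Your route is slightly more self-contained and symmetric; the paper's {\bf(A1)}-auxiliary trick is a neat shortcut that reuses Section~\ref{sec:A1} wholesale and halves the boundary bookkeeping. One small point to make explicit in your write-up: for the subsolution lateral boundary you need $v(t,\pm c^*t)\le\overline v(t,\pm c^*t)$, and here the relevant upper bound on $v$ comes from Lemma~\ref{lem:simple est} (namely $v\le 1+Me^{-t}$), not from Lemma~\ref{lem:exp-deacy-v}; this forces the additional constraint $\beta<1$ and $\beta$ smaller than the decay rate of $1-V_+$ at $\xi_+=(c^*-c_{uv})t$, both of which are harmless but should be recorded.
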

\begin{proof}
Let $(\hat{u},\hat{v})$ be the solution of \eqref{LV-sys} with initial datam $(\hat{u}_0, \hat{v}_0)$ satisfying
\bea\label{A1-A2-ic}
\hat{u}_0(x)=u_0(x),\quad \hat{v}_0(x)> v_0(x) \quad\mbox{in $\mathbb{R}$},
\eea
and $\hat{v}_0(\cdot)\geq \rho$ in $\mathbb{R}$ for some $\rho>0$.
Thanks to \eqref{A1-A2-ic}, we can compare $(\hat{u},\hat{v})$ with $(u,v)$ such that
\bea\label{cp-A1-A2}
\hat{u}(t,x)\leq u(t,x),\quad \hat{v}(t,x)\geq v(t,x)\quad \mbox{for $t\geq0$ and $x\in\mathbb{R}$}.
\eea

Denote $(\underline{u},\overline{v})$ by {\eqref{subsol-2}} such that {Lemma~\ref{lem:subsol-2}} holds.
Since $(\hat{u}_0, \hat{v}_0)$ satisfies {\bf(A1)}, one can
apply {Lemma~\ref{lem:order-4}} (with {a} suitable choice of parameters)
to {ensure} that $\underline{u}\leq\hat{u}$ and $\overline{v}\geq \hat{v}$ over $[T_0,\infty)\times[0,\infty)$ for some $T_0\gg1$.
 Together with \eqref{cp-A1-A2}, we have
\bea\label{goal-1}
\underline{u}(t,x) \leq u(t,x) ,\quad \overline{v}(t,x)\geq v(t,x) \quad \mbox{in $[T_0,\infty)\times[0,\infty)$}.
\eea

Next, {denote $(\overline{u},\underline{v})$ by \eqref{supersol-2} such that Lemma~\ref{lem:super-sol2} holds.}
For any given $c\in(c_{uv},c_v)$, we shall show that for some large $T^{**}\geq T^*$ {($T^*$ is defined in Lemma~\ref{lem:super-sol2})} and small $\beta^{**}$,
\bea\label{bdry cond-sec4}
\qquad \overline{u}(t,\pm ct)\geq u(t,\pm ct),\quad v(t,\pm ct)\geq \underline{v}(t,\pm ct)\quad \mbox{for all $t\geq T^{**}$, provided $\beta\in(0,\beta^{**})$},
\eea
It follows from {\eqref{key est:1-U-}} and Lemma~\ref{lem:exp-deacy-u} that for some ${T_1}>0$,
\beaa
\overline{u}(t,ct)-u(t,ct)&\geq& 1-U(-ct-c_{uv}t+\zeta(t))+\hat{p}(t)-u(t,ct)\\
&\geq& -K_1e^{-\lambda_{u}[(c+c_{uv})t-{\zeta_0}]}+\hat{p}_0 e^{-\beta t}- Ce^{-\mu t},\quad t\geq {T_1},
\eeaa
where $C$, $\mu$ are given in Lemma~\ref{lem:exp-deacy-u}.
Therefore, taking $\beta^{**}<\min\{\lambda_{u}(c+c_{uv}),\mu\}$ and ${T_1}$ larger if necessary,
we see that $\overline{u}(t,ct)\geq u(t,ct)$ for all $t\geq {T_1}$, provided $\beta\in(0,\beta^{**})$.
Thanks to {\eqref{key est:V-}} and Lemma~\ref{lem:exp-deacy-v}, there exists ${T_2}>0$ such that
\beaa
v(t,ct)-\underline{v}(t,ct)&\geq& 1-C' e^{-\nu t}-(1-\hat{q}(t))[1+V(-ct-c_{uv}t+\zeta(t))]\\
&\geq& 1-C' e^{-\nu t}-(1-\hat{q}_0e^{-\beta t})[1+K_2e^{-\lambda_{v}[(c+c_{uv})t-\zeta(0)]}],\quad t\geq {T_2},
\eeaa
where $C'$ and $\nu$ are given in Lemma~\ref{lem:exp-deacy-v}.
Taking $\beta^{**}$ smaller such that
\beaa
\beta^{**}<\min\{\lambda_{u}(c+c_{uv}),\mu, \nu, \lambda_{v}(c+c_{uv}) \}
\eeaa
and ${T_2}$ larger if necessary, we obtain that $v(t,ct)\geq \underline{v}(t,ct)$ for all $t\geq {T_2}$, provided $\beta\in(0,\beta^{**})$.

Since $\overline{u}(\cdot,t)$ and $\underline{v}(\cdot,t)$ are even, the similar process used in the above (see also Remark~\ref{rk-sec4}) can be applied to assert $\overline{u}(t,-ct)\geq u(t,-ct)$ and $v(t,-ct)\geq \underline{v}(t,-ct)$ for $t\geq {T_3}$,  provided $\beta\in(0,\beta^{**})$ ($\beta^{**}$ may become smaller), where ${T_3}$ is some large constant. Therefore, \eqref{bdry cond-sec4} follows with $T^{**}:=\max\{T^*,{T_1},{T_2},{T_3}\}$.

To use $(\overline{u},\underline{v})$ as a comparison function over $[T^{**},\infty)\times {[-ct, ct]}$,
we fix $\beta<\min\{\beta^*,\beta^{**}\}$. Then, taking $\zeta_0$ close to $-\infty$
(this does not affect the choice of $\beta^{*}$ and $\beta^{**}$), from the definition of $(\overline{u},\underline{v})$ we can easily see
\beaa
\overline{u}(T^{**},x)\geq u(T^{**},x),\quad v(T^{**},x)\geq \underline{v}(T^{**},x)\quad\mbox{for $x\in[-cT^{**}, cT^{**}]$}.
\eeaa
As a result, a simple comparison analysis yields
\bea\label{goal-2}
\overline{u}(t,x) \geq u(t,x) ,\quad {v}(t,x)\geq \underline{v}(t,x) \quad \mbox{in $[T^{**},\infty)\times{[-ct, ct]}$}.
\eea

Now, combining \eqref{goal-1} and \eqref{goal-2}, we obtain that for all large time and {$|x|\leq ct$},%$x\geq0$,
{
\beaa
&&U(x-c_{uv}t+\zeta(t))+U(-x-c_{uv}t+\zeta(t))-1-\hat{p}(t)\\
&&\qquad \leq u(t,x)\leq U(x-c_{uv}t+\zeta(t))+U(-x-c_{uv}t+\zeta(t))-1+\hat{p}(t),\\
&& (1-\hat{q}(t))\Big[V(x-c_{uv}t+\zeta(t))+V(-x-c_{uv}t+\zeta(t))\Big]\\
&&\qquad\leq v(t,x)\leq (1+\hat{q}(t))\Big[V(x-c_{uv}t+\zeta(t))+V(-x-c_{uv}t+\zeta(t))\Big].
\eeaa
}
Then following the same line as in the proof {Proposition~\ref{prop1}}, we can finish the proof of Lemma~\ref{lem:cov-left} and may safely omit the details. This completes the proof.
\end{proof}

We are now in a position to verify Theorem~\ref{thm3}.
\begin{proof}[Proof of Theorem~\ref{thm3}]
We first show that, for any $c>c_{uv}$,
\bea\label{goal-3}
&&\lim_{t\to\infty}\left[\sup_{x\in[ct,\infty)}\Big|v(t,x)-V_{KPP}(x-c_{v} t+\frac{3}{c_v}\ln t+\omega(t))\Big|+\sup_{x\in[ct,\infty)}\Big|u(t,x)\Big|\right]=0,
\eea
where $\omega$ is a bounded function defined on $[0,\infty)$.
Indeed, by Lemma~\ref{lem:exp-deacy-u}, $u$ decays to zero exponentially for $x\in[ct,\infty)$, which allows us to
estimate $v$ along the process in Section 4.1 by
exchanging the role of $u$ and $v$ therein. Then we can deduce that
there exists a bounded function $\omega:\ [0,\infty)\to \mathbb{R}$ such that
\beaa
\lim_{t\to\infty}\sup_{x\in[ct,\infty)}\Big|v(t,x)-V_{KPP}\Big(x-c_{v} t+\frac{3}{c_v}\ln t+\omega(t)\Big)\Big|=0.
\eeaa
Hence, \eqref{goal-3} holds.

In view of $c_{uv}<c_u<c_v$ and $c_0=\frac{c_{uv}+c_v}{2}$, Theorem~\ref{thm3} follows immediately from Lemma~\ref{lem:cov-left} and \eqref{goal-3}. The proof is thus complete.
\end{proof}

\noindent{\bf Acknowledgments}
We appreciate {the valuable comments and suggestions of the reviewers and the editor}, which help us to improve the manuscript.
RP was partially supported by NSF of China (No. 11671175, 11571200), the Priority Academic Program Development of Jiangsu Higher Education Institutions, Top-notch Academic Programs Project of Jiangsu Higher Education Institutions (No. PPZY2015A013) and Qing Lan Project of Jiangsu Province; CHW was partially supported by the Ministry of Science and Technology of Taiwan (MOST 108-2636-M-009-009, MOST 109-2636-M-009-008);
MLZ was partially supported by the Australian Research
Council (No. DE170101410).

\medskip

{\noindent{\bf Appendix}}

{
We provide a proof of \eqref{segregation-uv}.
}

{
\begin{proof}[Proof of \eqref{segregation-uv}]
Given $c\in(0,c_{uv})$, {it} is well known that $c_{uv}$ has the continuous dependence property on parameters \cite{Kan-on95}.
Therefore, one can choose $c_{\e}\in(c,c_{uv})$ which is close to $c_{uv}$ such that there exists $(\underline{U},\overline{V})$ satisfying
\bea\label{perturb-TWsys}
\begin{cases}
c_{\e}\underline{U}'+d\underline{U}''+r\underline{U}[1-\e-\underline{U}-a\overline{V}]=0,\quad \xi\in\mathbb{R},\\
c_{\e}\overline{V}'+\overline{V}''+\overline{V}[1+\e-\overline{V}-b\underline{U}]=0,\quad \xi\in\mathbb{R},\\
(\underline{U},\overline{V})(-\infty)=(1-\e,0),\quad (\underline{U},\overline{V})(+\infty)=(0,1+\e),\\
\underline{U}'(\xi)<0,\quad \overline{V}'(\xi)>0,\quad \xi\in\mathbb{R}.
\end{cases}
\eea

We define a subsolution $(\underline{u},\overline{v})$ by
\beaa
\begin{cases}
\underline{u}(x,t)=\max\Big\{0,\underline{U}(x-c_{\e}t-\zeta(t))+\underline{U}(-x-c_{\e}t-\zeta(t))-(1-\e)-p(t)\Big\},\\
\overline{v}(x,t)=\overline{V}(x-c_{\e}t-\zeta(t))+\overline{V}(-x-c_{\e}t-\zeta(t))+q(t),
\end{cases}
\eeaa
where $p(t)=p_0 e^{-\mu t}$, $q(t)=q_0 e^{-\mu t}$ and $\zeta(t)=-\zeta_0+\zeta_1 e^{-\mu t}$, will be determined later.

Our goal is to show that one can choose suitable parameters given in the above such that for some large $T>0$,
\bea\label{Appendix-goal-1}
N_1[\underline{u},\overline{v}](t,x)\leq0,\quad N_2[\underline{u},\overline{v}](t,x)\geq0 \quad \mbox{for $t\geq T$ and $x\in\mathbb{R}$}.
\eea
By the symmetry, we can only consider $x\geq0$.
For convenience, we set
\beaa
\xi_\pm=\pm x-c_{\e}t-\zeta(t),\quad (U_{\pm},V_{\pm})=(\underline{U}(\xi_{\pm}),\overline{V}(\xi_{\pm})).
\eeaa

Take $\zeta_1>0$ such that $\zeta'<0$. Since $\underline{U}'<0$ and $\overline{V}'>0$, by Lemma~\ref{lem:AS-}, one has
\bea\label{App-key}
\begin{cases}
1-\e-U_-\leq 1-\e-\underline{U}(-c_{\e}t+\zeta_0)\leq K_1e^{-\lambda_{u}(c_{\e}t-\zeta_0)}\quad
\mbox{for all $x\geq0$ and $t\geq0$},\\
V_-\leq K_2e^{-\lambda_{v}(c_{\e}t-\zeta_0)}\quad \mbox{for all $x\geq0$ and $t\geq0$,}
\end{cases}
\eea
for some $\lambda_{u},\lambda_v,K_1,K_2>0$.

We first consider $\underline{u}>0$.
By direct computation and using the equations in \eqref{perturb-TWsys},
we have
\beaa
N_1[\underline{u},\overline{v}]&=&-\zeta'(U'_+ +U'_-)-p'+rU_+[-\e +U_- - (1-\e)-p+a(V_- +q)]
\\&&+rU_-[-\e+ U_+ - (1-\e)-p+a(V_+ +q)]
\\&&+r((1-\e)+p)[1-(U_+ +U_- -(1-\e)-p)-a(V_+ +V_-+q)]
\eeaa
and
\beaa
N_2[\underline{u},\overline{v}]&=&-\zeta'(V'_+ +V'_-)+q'+V_+[\e+(V_- +q)+b(U_{-} -(1-\e)-p)]\\
&&+V_-[\e+(V_+ +q)+b(U_+ -(1-\e)-p)]
\\&&-q[1-(V_+ + V_- +q)-b(U_+ +U_- -(1-\e)-p)].
\eeaa

We divide the discussion into three cases.
In the following, the positive constant $C$ is independent of $\mu$ and may change from line to line.

Let us take
\bea\label{choice-mu}
0<\mu<\min\{\lambda_{u}c_{\e}, \lambda_{v}c_{\e}, r(a-1), {b-1}\}.
\eea

{\bf Case 1:} $0\leq U_+\leq \delta$ and $1-\delta\leq V_+\leq 1$ for some small $\delta>0$. Note that $\zeta'U'_{\pm}>0$. Then
\beaa
N_1[\underline{u},\overline{v}]&\leq&
-p'-rU_+(p -aq -aV_-)-rU_-(1-U_+ -aV_+)-rU_-(p-aq)\\
&&+r(1-\e+p)[(1-U_+-aV_+)+(1-\e-U_-) +p -aq].
\eeaa
By taking $p_0=aq_0$ and collecting $V_-$, $1-\e -U_-$ and $p$ separately, we obtain
\beaa
N_1[\underline{u},\overline{v}]&\leq& -rp[-\mu/r+aV_+-1]+C[(1-\e-U_-)+V_-]\\
&\leq& -rp_0e^{-\mu t}[a(1-\delta)-\mu/r-1]+C[e^{-\lambda_{u}(c_{\e}t-\zeta_0)}+e^{-\lambda_{v}(c_{\e}t-\zeta_0)}],
\eeaa
for $0\leq U_+\leq \delta$ and $1-\delta\leq V_+\leq 1$, where we used \eqref{App-key}.
Therefore, by \eqref{choice-mu} and choosing $\delta$ small enough,
for some large $T_1$,  we have $N_1[\underline{u},\overline{v}]\leq0$  for $t\geq T_1$ within the range in Case 1.

Consider the inequality of $N_2[\underline{u},\overline{v}]$. Since $\zeta'V'_{\pm}<0$, we have
\beaa
N_2[\underline{u},\overline{v}]&\geq& q'+V_+[\e -b(1-\e-U_-)-bp]-CV_- -q\\
&\geq& -\mu q_0e^{-\mu t}+(1-\delta)\e -C e^{-\lambda_{u}(c_{\e}t-\zeta_0)} -C e^{-\mu t}  -Ce^{-\lambda_{v}(c_{\e}t-\zeta_0)} -q_0e^{-\mu t},
\eeaa
where we have used $V_{+}\geq1-\delta$ and \eqref{App-key}.
Hence,  one can find $T_2\gg1$ such that $N_2[\underline{u},\overline{v}]\geq0$ for $t\geq T_2$ within the range in Case 1.

{\bf Case 2:} $1-\delta\leq U_+\leq 1$ and $0\leq V_+\leq \delta$ for some small $\delta>0$. This case is similar to Case 1. Indeed,
it holds that
\beaa
N_1[\underline{u},\overline{v}]&\leq&-p'
-rU_+(\e+p -aq -aV_-)-rU_-(1-U_+ -aV_+) -rU_-(p-aq)\\
&&+r(1-\e+p)[(1-U_+ -aV_+) +(1-\e-U_-) +p-aq]\\
&\leq& -r(1-\delta)\e+Ce^{-\mu t}+Ce^{-\lambda_{v}(c_{\e}t-\zeta_0)}+Ce^{-\lambda_{u}(c_{\e}t-\zeta_0)},
\eeaa
where $p_0=aq_0$, $1-\delta\leq U_+$ and \eqref{App-key} are used.
Therefore,
for some large $T_3$,  we have $N_1[\underline{u},\overline{v}]\leq0$ for $t\geq T_3$.

On the other hand, we observe that
\beaa
N_2[\underline{u},\overline{v}]&\geq& q'+V_+[\e -b(1-\e-U_-)-bp]+CV_- -q(1-bU_+ +b(1-\e -U_-)+bp)\\
&\geq& -\mu q_0e^{-\mu t} +V_+[\e -C(e^{-\lambda_{u}(c_{\e}t-\zeta_0)}+e^{-\mu t})] +Ce^{-\lambda_{v}(c_{\e}t-\zeta_0)}
\\&&+q_0e^{-\mu t}[b(1-\delta)-1-C(e^{-\lambda_{u}(c_{\e}t-\zeta_0)}+e^{-\mu t})].
\eeaa
Hence, by \eqref{choice-mu} and choosing $\delta$ small enough, one can find $T_4\gg1$ such that $N_2[\underline{u},\overline{v}]\geq0$ for $t\geq T_4$.

{\bf Case 3:} the middle part: $\delta_1\leq U_+,V_+\leq1-\delta_2$ for some small $\delta_i>0$ for $i=1,2$.
In this case, one has: $U_+'<-\kappa$ and $V_+'>\kappa$ for some $\kappa>0$. Then using $p_0=aq_0$ it holds that
\beaa
N_1[\underline{u},\overline{v}]&\leq&
\zeta'\kappa-p'-rU_+(\e+p -aq -aV_-)-rU_-(1-U_+-aV_+) -rU_-(p-aq)\\
&&+r(1-\e+p)[(1-U_+-aV_+)+(1-\e-U_- +p)-a(V_- +q)]\\
&\leq& \zeta'\kappa-p'-rU_+(\e-aV_-)+r(1-\e -U_-+p)[1-U_+ -aV_+]\\
&&+r(1-\e +p)[1-\e -U_- ]\\
&\leq&  -\zeta_1\kappa\mu e^{-\mu t}+ \mu p_0  e^{-\mu t}  -rU_+(\e-Ce^{-\lambda_{v}(c_{\e}t-\zeta_0)})
\\&&+r(Ce^{-\lambda_{u}(c_{\e}t-\zeta_0)}+p_0e^{-\mu t})[1-\delta_1]+r(1-\e+p)Ce^{-\lambda_{u}(c_{\e}t-\zeta_0)}\\
&\leq& [-\zeta_1\kappa\mu + \mu p_0 +rp_0(1-\delta_1)]e^{-\mu t}-rU_+(\e-Ce^{-\lambda_{v}(c_{\e}t-\zeta_0)})+Ce^{-\lambda_{u}(c_{\e}t-\zeta_0)}.
\eeaa
By \eqref{choice-mu} and choosing $p_0$ small enough, one can find $T_5\gg1$ such that $N_1[\underline{u},\overline{v}]\leq0$ for $t\geq T_5$.

Next, we also find that
\beaa
N_2[\underline{u},\overline{v}]&\geq& -\zeta'\kappa+q'+V_+[\e -b(1-\e-U_-)-bp]+CV_- -q(1-bU_+ +b(1-\e -U_-)+bp)\\
&\geq& \zeta_1\kappa\mu e^{-\mu t}-\mu q_0e^{-\mu t} +V_+[\e -C(e^{-\lambda_{u}(c_{\e}t-\zeta_0)}+e^{-\mu t})] +Ce^{-\lambda_{v}(c_{\e}t-\zeta_0)}
\\&&-q_0e^{-\mu t}[1+Ce^{-\lambda_{u}(c_{\e}t-\zeta_0)}+bp_0 e^{-\mu t}].
\eeaa
By \eqref{choice-mu} and choosing $q_0$ small enough, for some $T_6\gg1$, $N_2[\underline{u},\overline{v}]\geq0$ for $t\geq T_6$.

From the above discussion and taking $T=\max\{T_1,T_2,T_3,T_4,T_5,T_6\}$, we have shown \eqref{Appendix-goal-1} if $\underline{u}>0$,
When $\underline{u}=0$, it is not hard to show \eqref{Appendix-goal-1}
still holds and we may omit the details here.

Let $T$ be fixed such that \eqref{Appendix-goal-1} holds. We shall take such $T$ as the initial time to compare $(\underline{u},\overline{v})(t,x)$
with the solution $(u,v)(t+\widehat{T},x)$ for some $\widehat{T}>0$ that can be chosen. To see this,
by the definition of $(\underline{u},\overline{v})(t,x)$, one may choose $L>0$ such that
$\underline{u}(T,x)=0$ and $\overline{v}(T,x)>1+\e$ for all $|x|\geq L$. For such fixed $L$, by {\bf(H3)},
$(u,v)(t,x)\to(1,0)$ as $t\to\infty$ uniformly for $x\in[-L,L]$. Also, by Lemma~\ref{lem:simple est},
$v(t,x)<1+\e$ for all large $t$ and $|x|\geq L$. Hence, we can choose $\widehat{T}\gg1$ such that
$u(T+\widehat{T},x)\geq \underline{u}(T,x)$ and $v(T+\widehat{T},x)\leq \overline{v}(T,x)$ for all $x\in\mathbb{R}$.
By comparison, we obtain that
\beaa
u(t+\widehat{T},x)\geq \underline{u}(t,x),\quad v(t+\widehat{T},x)\leq \overline{v}(t,x),\quad \mbox{for $t\geq T$ and $x\in\mathbb{R}$},
\eeaa
which implies that
\beaa
\lim_{t\to\infty}\inf_{|x|\leq c t}u(t,x)\geq 1-\e,\quad \lim_{t\to\infty}\sup_{|x|\leq c t}|v(t,x)|=0
\eeaa
for any $c\in(0,c_\e)>0$.
Since $\e>0$ can be arbitrarily small and $c_\e\to c_{uv}$ as $\e\to0$,
and using Lemma~\ref{lem:simple est},
we thus complete the proof of  \eqref{segregation-uv}.
\end{proof}
}

%%%%%%%%%%%%%%%%%%%%%%%%%%%%%%%%%%%%%%%%%%%%%%%%%%%%%%%%%%

\end{document}